\RequirePackage{fix-cm}
\documentclass{svjour3}       
\smartqed  

\usepackage{graphicx}%
\usepackage{multirow}%
\usepackage{amsmath,amssymb,amsfonts}%
\usepackage{mathrsfs}%
\usepackage[title]{appendix}%
\usepackage{xcolor}%
\usepackage{textcomp}%
\usepackage{enumitem}
\usepackage{manyfoot}%
\usepackage{booktabs}%
\usepackage{algorithm}%
\usepackage{algorithmicx}%
\usepackage{algpseudocode}%
\usepackage{listings}%
\usepackage{caption}
\usepackage{float} 
\usepackage{tikz}
\usepackage{threeparttable}
\usetikzlibrary{arrows.meta,calc}
\usepackage[hidelinks, colorlinks=true, linkcolor=blue, citecolor=blue, urlcolor=magenta]{hyperref}
\setlist[enumerate]{label=(\roman*)}
\def\A{\mathcal{A}}
\def\B{\mathcal{B}}

\def\I{\mathcal{I}}
\def\R{\mathbb{R}}
\def\E{\mathbb{E}}
\def\X{\mathbb{X}}
\def\S{\mathbb{S}}
\def\Q{\mathbb{Q}}
\def\D{\mathcal{D}}

\def\Diag{\operatorname{Diag}}
\def\Arw{\operatorname{Arw}}

\def\interior{\operatorname{int}}
\def\K{\mathbb{K}}

\def\O{\mathcal{O}}
\def\W{\mathcal{W}}
\def\H{\mathcal{H}}
\def\hE{\hat{\mathbb{E}}}

\def\h{\bar{h}}

\def\et{\eta_\rho(w;\mu)}
\def\dx{\Delta x}
\def\hx{\hat{x}}
\def\dhx{\Delta \hat{x}}
\def\bx{\bar{x}}
\def\ds{\Delta s}
\def\dlam{\Delta \lambda}

\renewcommand{\[}{\begin{equation}} 
\renewcommand{\]}{\end{equation}}   
\DeclareMathOperator*{\argmax}{arg\,max}
\DeclareMathOperator*{\argmin}{arg\,min}
\DeclareMathOperator*{\tr}{tr}

\journalname{}
\begin{document}

\title{Polynomial iteration complexity of a path-following smoothing Newton method for symmetric cone programming
\thanks{The authors are listed in alphabetical order.}
}

\titlerunning{Polynomial iteration complexity of a PFSNM for SCP}        

\author{Yu-Hong~Dai \and Ruoyu~Diao \and Xin-Wei~Liu  \and Rui-Jin~Zhang }

\authorrunning{Y.H. Dai, R. Diao, X.W. Liu,  R.J. Zhang} 

           \institute{ Yu-Hong~Dai $\cdot$ Ruoyu~Diao \at
              State Key Laboratory of Mathematical Sciences, Academy of Mathematics and Systems Science, Chinese Academy of Sciences, and the University of Chinese Academy of Sciences, Beijing, China \\         
              \email{dyh@lsec.cc.ac.cn, diaoruoyu18@mails.ucas.ac.cn}
           \and
           Xin-Wei~Liu
            \at
             Institute of Mathematics, Hebei University of Technology, Tianjin, China \\
             \email{mathlxw@hebut.edu.cn} 
              \and
               Rui-Jin~Zhang (Corresponding author) \at
              School of Mathematical Sciences and LPMC, Nankai University, Tianjin, China\\
              \email{zhangrj@nankai.edu.cn} 
}

\date{}

\makeatletter
\def\makeheadbox{}
\patchcmd{\@maketitle}{\hrule\@height0.35mm\noindent}{}{}{}
\makeatother
\maketitle



\begin{abstract}

 It has long remained open whether smoothing Newton methods (SNMs) for symmetric cone programming (SCP) admit polynomial iteration complexity. A key difficulty lies in the lack of an analogue of the self-concordant convex framework underlying interior-point methods (IPMs). In this paper, inspired by Nemirovski's self-concordant convex-concave theory, we address this open problem by introducing a reduced barrier augmented Lagrangian (BAL) function. We prove that the reduced BAL function is self-concordant convex-concave and establish that the parameterized smooth system arising in SNMs coincides with the first-order optimality conditions of an associated minimax problem. Motivated by this equivalence, we propose a path-following smoothing Newton method (PFSNM). The reduced BAL function induces a central path and an associated neighborhood, which provide estimates for the Newton decrement needed for the path-following analysis. As a result, the method achieves an iteration complexity of $\O(\sqrt{\nu}\ln(1/\varepsilon))$, matching the best-known short-step complexity for IPMs. Numerical results on standard benchmarks show that PFSNM is competitive with several well-known interior-point solvers, and the observed performance is consistent with the theoretical development.

\keywords{Smoothing Newton method \and Path-following method  \and  Symmetric cone\and Self-concordant convex-concave function \and Polynomial iteration complexity}
\subclass{90C60 \and 90C25 \and 65K05}
\end{abstract}
\section{Introduction}\label{sec1}

Symmetric cone programming (SCP) is a fundamental class of convex optimization problems that includes linear programming (LP), second-order cone programming (SOCP), semidefinite programming (SDP), and their Cartesian products. Let $\E$ be a Euclidean Jordan algebra equipped with a bilinear operation ``$\circ$'' and an identity element $e$, and let $\K \subseteq \E$ be the associated symmetric cone, i.e., a closed convex cone that is both self-dual and homogeneous. We consider the standard primal-dual form of SCP:
\begin{equation}\label{SCP}
\begin{aligned}
(\operatorname{P}) \qquad&\min\, \left\{\langle {c}, {x}\rangle\,|\, \A {x}={b},\,x\in  \K\right\},\\
(\operatorname{D}) \qquad&\max \left\{\langle {b}, \lambda \rangle\,|\, \A^{*} \lambda+{s}={c},\,s\in  \K\right\},
\end{aligned}
\end{equation}
where $c,\,x,\,s\in\E$ and $b,\,\lambda\in \mathbb{R}^{m}$. The linear operator $\mathcal{A}:\E\rightarrow\mathbb{R}^{m}$ is assumed to be surjective, and $\mathcal{A}^{*}$ denotes its adjoint.
We assume that the Slater condition holds for both (P) and (D), which is a standard assumption in SCP.

Interior-point methods (IPMs) are among the most important algorithms for symmetric cone programming. They replace the complementarity condition $x\circ s = 0$ in the Karush--Kuhn--Tucker (KKT) system of~\eqref{SCP} by  $x\circ s = \mu e$ with $\mu >0$, which yields the perturbed KKT system
\begin{equation}
	\A x=b,\; \A^{*}\lambda+s=c,\; x,s\in \interior(\K),\;  x\circ s=\mu e.
\end{equation}
The solutions of this system form the central path. IPMs trace this path as $\mu \downarrow 0$. Their complexity theory is based on a self-concordant convex framework~\cite{nesterov1994interior}, which induces a local metric, yields estimates for the Newton decrement, and provides a natural way to define neighborhoods of the central path. In SCP, this framework leads to the classical polynomial iteration complexities: an $\O(\sqrt{\nu}\ln(1/\varepsilon))$ iteration complexity for short-step methods~\cite{de2002aspects,de2016turing,schmieta2003extension,vavasis1996primal,wright1997primal} and an $\O(\nu\ln(1/\varepsilon))$ iteration complexity for long-step methods~\cite{de2002aspects,nesterov1997long,nocedal2006numerical,schmieta2003extension,wright1997primal}, where $\nu$ denotes the rank of $\K$ and $\varepsilon$ is the target accuracy. These worst-case complexities explain why IPMs admit a remarkably robust global theory while retaining strong practical performance.

Alongside IPMs, smoothing Newton methods (SNMs) constitute another important class of algorithms for SCP. The basic idea is to reformulate the KKT system as a system of nonsmooth equations and then smooth the complementarity condition. This yields the parameterized smooth system
\begin{equation}
	\A x=b,\; \A^{*}\lambda+s=c,\; \Phi(x,s;\mu)=0, 
\end{equation}
where $\Phi$ is a chosen smoothing function and $\mu$ is driven to zero via a continuation strategy \cite{chen2003non,huang2004sub,kanzow1996some,peng1999non}. Common choices for $\Phi$ include the smoothing Fischer--Burmeister (FB) function~\cite{kanzow1996some,qi2000new} and the smoothing Chen--Harker--Kanzow--Smale (CHKS) function~\cite{chen1993non,kanzow1996some,smale2000algorithms}.   Unlike IPMs, SNMs do not require the iterates to remain strictly in the interior and are therefore sometimes called non-interior continuation methods. The first non-interior path-following method was proposed by Chen and Harker \cite{chen1993non}. Subsequently, Burke and Xu \cite{burke1998global} established the first global linear convergence result for a non-interior path-following method for linear complementarity problems, and later proved its local quadratic convergence under suitable assumptions \cite{burke2000non}. Qi, Sun, and Zhou \cite{qi2000new} gave a new formulation of smoothing Newton methods for nonlinear complementarity problems and box-constrained variational inequalities, thereby providing a unified framework that strongly influenced later developments of SNMs. For further advances and related results on SNMs, we refer the reader to \cite{chan2008constraint,huang2004sub,kanzow1999jacobian,kong2008regularized,liang2024squared,sun2004squared} and the references therein.

Despite these appealing  convergence results, SNMs have lacked a polynomial iteration complexity guarantee, which has remained a long-standing open problem (see Burke and Xu \cite{burke1998global} and Kanzow \cite{kanzow1996some}). A key difficulty is that classical SNMs do not admit an analogue of the self-concordant convex framework that underlies the polynomial complexity theory of IPMs. Such a framework provides a central path, a neighborhood structure defined by an appropriate merit function, and the Newton-decrement estimates required for path-following analysis. These are fundamental because they give estimates for the descent of the merit function when $\mu$ is updated, which are typically hard to obtain without the framework.

Several attempts have been made to address this problem. One line of research incorporates the parameterized smooth system into an interior-point framework. A representative example is the  interior-point path-following algorithm proposed by Xu and Burke \cite{xu1999polynomial},  which uses the smoothing CHKS function to generate a rescaled Newton direction within the interior-point framework and achieves a polynomial complexity. However, its iterates are still required to stay in the interior of the cone. In contrast, Hotta, Inaba, and Yoshise \cite{hotta2000complexity} proposed an SNM based on the smoothing CHKS function that eliminates the interior-point requirement, but with a non-polynomial complexity of $\O\left({\varepsilon^{-6}}\ln{\varepsilon^{-2}}\right)$, which is far from the standard polynomial iteration complexities of IPMs. Hence, the following central problem is still open:

\begin{quote}
\textit{Can smoothing Newton methods for symmetric cone programming attain polynomial iteration complexity?}
\end{quote}

In this paper, we answer this question affirmatively. Our approach is inspired by the self-concordant convex-concave framework introduced by Nemirovski~\cite{nemirovski1999self}, which provides a natural viewpoint for extending self-concordant ideas from convex minimization to saddle-point problems. The key step is to introduce a reduced barrier augmented Lagrangian (BAL) function that reveals the minimax structure underlying SNMs. We show that this reduced BAL function is self-concordant convex-concave, and that the parameterized smooth system associated with SNMs coincides with the first-order optimality conditions of a minimax problem with the reduced BAL function as the objective. This equivalence induces a local metric for SNMs, enabling the definition of a central path and an associated neighborhood analogous to those in the interior-point framework. More importantly, it provides the Newton decrement estimates required to control the path-following process. On this basis, we propose a path-following smoothing Newton method (PFSNM) for SCP and establish a worst-case iteration complexity of $\O(\sqrt{\nu}\ln(1/\varepsilon))$, which matches the best-known short-step complexity for IPMs on symmetric cones. To the best of our knowledge, this is the first polynomial iteration complexity result for a smoothing Newton method in the general SCP setting.

Although our main focus is theoretical, the resulting method is also computationally attractive. PFSNM admits a Newton system with an explicit Schur-complement structure, leading to an efficient system-formation procedure. Furthermore, numerical experiments on standard benchmark problems show that PFSNM is competitive with several well-known interior-point solvers. These results are consistent with the established polynomial-complexity theory.

\subsection{Organization}
 The remainder of the paper is organized as follows. Section~\ref{sec2} reviews preliminaries on Euclidean Jordan algebras and self-concordant convex-concave functions. Section~\ref{sec:equivalence characterization} introduces the reduced BAL function, establishes its self-concordant convex-concave property, and characterizes the parameterized smooth system via an equivalent minimax formulation. Section~\ref{sec:alg} presents the proposed path-following smoothing Newton method and the associated merit functions.
Section~\ref{sec5} analyzes the effect of updating the smoothing parameter on the merit functions and derives the polynomial iteration complexity of the proposed method. Numerical results are reported in Section~\ref{sec:computational-results} and illustrate the practical effectiveness of PFSNM. The paper concludes in Section~\ref{sec7}. Auxiliary technical proofs are
deferred to Appendix~\ref{secA1}.

\subsection{Notation}
Throughout the paper, $\hE$ and $\E$ denote finite-dimensional Euclidean spaces. For an integer $k\ge 1$, let $C^k(\E,\hE)$ denote the space of
$k$-times continuously differentiable functions from $\E$ to $\hE$; if $\hE=\mathbb{R}$, we write $C^k(\E):=C^k(\E,\mathbb{R})$. For $f\in C^{k} (\E)$, let $D^k f(x)[h_1, \dots, h_k]$ denote the $k$-th differential of $f$ at $x$ along directions $h_1, \dots, h_k \in {\E}$. Then $D^k f(x)$ is a symmetric $k$-linear form. In particular, $D^2 f(x):\E \to \E$ is also a linear operator, satisfying
$$
    \langle h_1,D^2 f(x)h_2 \rangle = D^2 f(x)[h_1,h_2], \quad \forall\, h_1,h_2\in \E.
$$
The gradient of $f$ at $x$ is denoted by $\nabla f(x)$ and is defined by
$$
\langle \nabla f(x), h \rangle = D f(x)[h], \quad \forall\, h\in \E.
$$
For $g\in C^k(\hE \times \E)$ and $w=(\hx,s)\in \hE\times \E$, we write
$\nabla_{\hx} g(w)$ and $\nabla_s g(w)$ for the partial gradients, $D_{\hx}g(w)$ and $D_s g(w)$ for the corresponding partial derivatives.
Let $\W,\, \H: \E \to \E$ be linear operators. We write $\H \succ \W$ if
$$
        \langle h, \H h \rangle > \langle h, \W h \rangle, \quad \forall\, h\in \E \setminus \{0\}.
$$
In particular, $\H \succ 0$ means $\langle h, \H h \rangle>0$ for all nonzero $h\in \E$. The interior of a cone ${\K}$ is denoted by ${\rm int}({\K})$. Additional notation will be introduced as needed.

\section{$\alpha$-self-concordant convex-concave functions}\label{sec2}

This section reviews the core concepts needed in the subsequent analysis. We begin by introducing Euclidean Jordan algebras, which provide the algebraic foundation for symmetric cones and thus play a central role in symmetric cone optimization. We then revisit and generalize the theory of $\alpha$-self-concordant convex-concave functions.

\subsection{Euclidean Jordan algebras and symmetric cones}\label{sec2.1}

\begin{definition}
    A Jordan algebra $(\E,\circ)$ is a finite-dimensional vector space over the real field $\R$ with a bilinear mapping $\circ:\E\times\E\to \E$ such that, for all $x,y,z\in\E$, 
    \begin{equation*}
    x\circ y = y\circ x,\ \, x^2\circ(x\circ y)=x\circ(x^2\circ y),\ \text{where } x^2:=x\circ x.
\end{equation*}
    The algebra is called Euclidean if there exists an inner product, denoted by $\langle \cdot, \cdot \rangle$, which is associative, i.e., for all $x,y,z\in\E$,
    \begin{equation*}
       \langle x\circ y,z\rangle=\langle x,y\circ z\rangle.
    \end{equation*}
\end{definition}

 A crucial property of Euclidean Jordan algebras is that every element admits a spectral decomposition, which generalizes the eigenvalue decomposition of a symmetric matrix.
\begin{theorem}\label{thm:spectral-decomposition}
Let $\E$ be a Euclidean Jordan algebra of rank $\nu$. 
For any $z \in \E$, there exist pairwise orthogonal primitive idempotents $\{v_{1},\ldots,v_{\nu}\}$ and unique real eigenvalues $\lambda_{1}(z),\ldots,\lambda_{\nu}(z)$ such that
\[
z = \sum_{i=1}^{\nu} \lambda_{i}(z) v_{i},
\]
where the idempotents satisfy
$$
\sum_{i=1}^\nu v_i = e, \ v_i \circ v_j = 0 \ \text{for all } i \neq j, \ \text{and} \ v_i \circ v_i = v_i \ \text{for all } i.
$$
\end{theorem}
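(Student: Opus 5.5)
The proof follows the classical route of Faraut--Korányi through power-associativity, the minimal polynomial, and a Peirce decomposition, with the rank defined as the maximal degree of a minimal polynomial.

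\emph{Step 1: the subalgebra $\R[z]$.} The Jordan identity $x^2\circ(x\circ y)=x\circ(x^2\circ y)$ implies power-associativity. Hence the subalgebra $\R[z]$ generated by $z$ and $e$ is commutative and associative. It is also finite-dimensional, and the restriction of the associative inner product makes it Euclidean. In particular, the multiplication operator $L(z)\colon y\mapsto z\circ y$ is self-adjoint, since $\langle z\circ y,w\rangle=\langle y,z\circ w\rangle$.

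\emph{Step 2: reduction to $\R^k$.} A finite-dimensional commutative associative real algebra with an associative positive-definite inner product has no nonzero nilpotents. Indeed, if $u^2=0$ then $\langle u,u\rangle$ is controlled by $\langle u^2,\cdot\rangle$; more precisely, take the smallest power $u^{2^j}=0$ and use $\langle u^{2^{j-1}},u^{2^{j-1}}\rangle=\langle u^{2^j},e\rangle=0$. Hence $\R[z]$ is semisimple. Moreover, every element has only real spectrum, because $L(z)$ is self-adjoint. By Wedderburn theory (or directly, by factoring the minimal polynomial $p$ of $z$, which must have distinct real roots), we get $\R[z]\cong\R^k$. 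This produces orthogonal idempotents $c_1,\dots,c_k$ summing to $e$ with $z=\sum_j \mu_j c_j$ and distinct $\mu_j$. Concretely, $c_j=\prod_{l\ne j}(z-\mu_l e)/(\mu_j-\mu_l)$ (Lagrange interpolation). This gives the ``first type'' spectral decomposition, and it is unique.

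\emph{Step 3: refinement to primitive idempotents.} Each $c_j$ that is not primitive is split further. The Peirce space $\E(c_j,1)=\{y: c_j\circ y=y\}$ is itself a Euclidean Jordan algebra with identity $c_j$. Choose $y$ there with the largest number of distinct eigenvalues, and apply Step 2 inside it. Iterating by induction on dimension, $c_j$ becomes a sum of pairwise orthogonal primitive idempotents. Orthogonality across different $j$ is preserved because the pieces lie in $\E(c_j,1)$, and $\E(c_j,1)\circ\E(c_l,1)=0$ for $j\ne l$. Repeating each $\mu_j$ along its primitive pieces gives $z=\sum_i\lambda_i v_i$, with $\sum_i v_i=e$, $v_i^2=v_i$, and $v_i\circ v_j=0$ for $i\ne j$.

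\emph{Step 4: the count equals $\nu$ and the eigenvalues are unique.} Every complete system of orthogonal primitive idempotents has the same cardinality, namely the rank $\nu$. This holds because all such systems are conjugate under the automorphism group, or alternatively because the count can be read off from the generic minimal polynomial of a regular element. The eigenvalues, counted with multiplicity, are then unique: they are the roots of the characteristic polynomial of $z$, which is independent of the chosen frame. I expect this invariance step to be the main obstacle. I would handle it by citing Faraut--Korányi (Thm.~III.1.2), since the paper uses it only as background.
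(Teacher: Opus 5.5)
The paper does not prove this theorem; it quotes it as standard background from Faraut--Kor{\'a}nyi. Your outline (power-associativity, the first-kind decomposition via $\R[z]\cong\R^k$ with Lagrange idempotents, Peirce refinement inside $\E(c_j,1)$ to primitive idempotents, then invariance of the frame size) is exactly the Faraut--Kor{\'a}nyi route, so it is consistent with how the paper treats the result. Be aware, though, that citing Thm.~III.1.2 for Step~4 cites the statement itself: your sketch really establishes only a decomposition into some finite family of orthogonal primitive idempotents, while the count $\nu$ and the uniqueness of the eigenvalues with multiplicity rest on the reference. Uniqueness in fact reduces to your Step~2 uniqueness, obtained by grouping equal $\lambda_i$, plus frame-size invariance applied in each $\E(c_j,1)$, so the one fact you genuinely import is that all Jordan frames have the same cardinality.
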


By the spectral decomposition, the trace and determinant of $z$ are defined analogously to those of a real matrix: 
\[
    \tr(z) : = \sum\limits_{i=1}^\nu \lambda_i(z), \quad   \det (z):= \prod\limits_{i=1}^\nu \lambda_i (z).
\]
An element $z$ lies in the interior of the cone $\K$ if and only if all its eigenvalues are strictly positive; in particular, $\det(z)>0$. The spectral decomposition also enables a functional calculus on $\E$. Given a scalar function $g: \mathbb{R} \to \mathbb{R}$ and
an element $z\in \E$ with the spectral decomposition $z = \sum_{i=1}^\nu \lambda_i(z) v_i$, define 
\[
g(z) := \sum_{i=1}^\nu g(\lambda_i(z)) v_i.
\]
This definition allows for operations such as the square root $z^{1/2}$, the inverse $z^{-1}$, 
and the logarithm $\ln(z)$.  

By \cite[Proposition III.1.5]{faraut1994analysis}, the bilinear form $\tr (x\circ y)$ is symmetric, positive definite, and associative. Thus, $ \tr (x\circ y)$  defines an inner product. In the sequel, we denote this inner product by $\langle x,y\rangle$, i.e., 
\[
        \langle x, y \rangle : = \tr (x\circ y).
\]

In the following, we present three commonly used symmetric cones and their algebraic properties, which are essential to the development of our algorithm.

\begin{table*}[htbp]
\centering
\caption{Common types of symmetric cones.}
\label{tab:cones}

\resizebox{\linewidth}{!}{%
\begin{tabular}{@{}cccc@{}}
\toprule
\textbf{Cone} 
& \textbf{Mathematical representation} 
& \textbf{Jordan product} 
& \textbf{Spectral decomposition} \\
\midrule
Nonnegative orthant
& $\mathbb{R}_+^n = \{ x \in \mathbb{R}^n \mid x_i \geq 0,\ \forall\, i=1,\dots,n \}$
& $x \circ y := \Diag(x)y$
& $x = \sum_{i=1}^n x_i e_i$ \\
\midrule
Second-order cone
& $\mathbb{Q}^{n+1} = \left\{ (x_0; \bar{x}) \in \mathbb{R}\times\mathbb{R}^{n} \mid x_0 \geq \|\bar{x}\|_2 \right\}$
& $x \circ y := \Arw(x)y$\textsuperscript{a}
& $x = \lambda_1 v_1 + \lambda_2 v_2$\textsuperscript{b} \\
\midrule
Positive semidefinite cone
& $\mathbb{S}_+^n = \{ X \in \mathbb{R}^{n \times n} \mid X \succeq 0 \}$
& $X \circ Y := \frac{1}{2}(XY + YX)$
& $X = \sum_{i=1}^n \lambda_i v_i v_i^\top$\textsuperscript{c} \\
\bottomrule
\end{tabular}%
}

\vspace{1mm}
\begin{minipage}{\linewidth}
\footnotesize
\textsuperscript{a}\ $\Arw(x):=\begin{pmatrix}
x_0 & \bar{x}^{\top}\\
\bar{x} & x_0 I_{n\times n}
\end{pmatrix}$.

\textsuperscript{b}\ Let $\tilde{v} \in \mathbb{R}^{n}$ satisfy $\|\tilde{v}\|=1$. 
Then the eigenvalues of $x$ are
$\lambda_1=x_0+\|\bar{x}\|$ and $\lambda_2=x_0-\|\bar{x}\|$, and the corresponding primitive idempotents are
$$
	\begin{array}{ll}
		{v_1}=\left\{
		\begin{array}{ll}
			\dfrac{1}{2}\left(1;\dfrac{{\bar{x}}}{\|{\bar{x}}\|}\right), &\text{if\, }{\bar{x}}\neq 0;\\
			\dfrac{1}{2}\left(1;{\tilde{v}}\right), &\text{if\, }{\bar{x}}= 0,
		\end{array}
		\right. \
		&{v_2}=\left\{
		\begin{array}{ll}
			\dfrac{1}{2}\left(1;-\dfrac{{\bar{x}}}{\|{\bar{x}}\|}\right), &\text{if\, }{\bar{x}}\neq 0;\\
			\dfrac{1}{2}\left(1;-{\tilde{v}}\right), &\text{if\, }{\bar{x}}= 0.
		\end{array}
		\right.
	\end{array}
	$$

\textsuperscript{c}\ $\{\lambda_i\}_{i=1}^n$ are the eigenvalues of $X$, and
$\{v_i v_i^\top\}_{i=1}^n$ are the corresponding primitive idempotents.
\end{minipage}
\end{table*}

\subsection{$\alpha$-self-concordant convex-concave functions}\label{sec2.3}

In this subsection, we review the definition and basic properties of $\alpha$-self-concordant convex-concave functions. They provide the theoretical tools for analyzing Newton's method for finding saddle points. We begin with the classical notion of self-concordant convex functions.

\begin{definition}[{\cite[Definition~2.1]{nemirovski1999self}}]
Let $\E$ be a Euclidean Jordan algebra, $\X \subset \E$ be an open convex domain, and $\alpha>0$. A convex function $ f\in C^3(\X)$ is called $\alpha$-self-concordant on $\X$ if the following conditions hold: 
\begin{enumerate}
    \item $f$ is a barrier for $\X$, i.e., $f(x^{(k)}) \to \infty$ along every sequence of points $x^{(k)}\in \X$ converging to the boundary of $\X$.
    \item For all $x\in \X$ and $h_x \in \E$, 
    \[
\left| D^3 f(x)[h_x, h_x, h_x] \right| \leq \frac{2}{\alpha^{1/2}} \left( D^2 f(x)[h_x, h_x] \right)^{3/2}.
\]  
\end{enumerate} 
If $\alpha=1$, $f$ is called standard self-concordant. An $\alpha$-self-concordant convex function $f$ is said to be nondegenerate if  $D^2 f(x) \succ 0$ for all $x\in \X$.
\end{definition}

It is well known that an $\alpha$-self-concordant convex function satisfies a Dikin ellipsoid bound, which characterizes the local geometry induced by its second-order derivative.

\begin{theorem}[{\cite[Theorem 2.1.1]{nesterov1994interior}}]\label{theorem 1}
Let $\E$ be a Euclidean Jordan algebra and $\X \subset \E$
be an open convex domain. Let $f$ be an $\alpha$-self-concordant convex function on $\X$, $x\in \X$, and  $\dx \in \E$. If $r:=\sqrt{\frac{1}{\alpha} D^2 f(x)[\dx, \dx] }<1$, then $x+\dx \in \X$ and for all $h_x\in \E$, 
\[
(1- {r})^2 D^2f(x)[h_x,h_x] \le D^2f(x+\dx )[h_x,h_x] \le \frac{1}{(1-{r})^2}D^2f(x)[h_x,h_x].
\]
\end{theorem}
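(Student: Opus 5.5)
The plan is the classical Nesterov--Nemirovski argument: reduce everything to one-dimensional restrictions and integrate a differential inequality. First normalize. Since $\frac{1}{\alpha}f$ is standard self-concordant whenever $f$ is $\alpha$-self-concordant, and the statement is invariant under this scaling (the quantity $r$ is defined with the factor $1/\alpha$, and the two-sided bound is homogeneous in $D^2 f$), it suffices to treat $\alpha=1$. Write $\|h\|_x:=(D^2f(x)[h,h])^{1/2}$ for the local seminorm.

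\emph{Step 1: growth of the local norm along the segment.} Put $x_t:=x+t\dx$ for $t$ in the maximal interval $[0,T)$ with $x_t\in\X$, and let $\phi(t):=D^2f(x_t)[\dx,\dx]$. The self-concordance inequality gives $|\phi'(t)|\le 2\phi(t)^{3/2}$. Where $\phi>0$, this reads $|\frac{d}{dt}\phi^{-1/2}|\le 1$, so $\phi(t)^{-1/2}\ge \phi(0)^{-1/2}-t=(1-tr)/r$, i.e. $\phi(t)^{1/2}\le r/(1-tr)$ for $t<\min(T,1/r)$. If $\phi$ vanishes somewhere, I will argue separately (via uniqueness for the ODE comparison, or a perturbation $\dx+\epsilon h$ with $\epsilon\to0$) that it vanishes identically on the interval, so the bound still holds. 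The degenerate case $r=0$ is handled in the same way.

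\emph{Step 2: feasibility $x+\dx\in\X$.} Suppose instead that $T\le 1<1/r$. On $[0,T)$, integrating Step 1 twice gives
\[
\frac{d}{dt}f(x_t)\le Df(x)[\dx]+\int_0^t \frac{r^2}{(1-sr)^2}\,ds,
\]
and the right-hand side stays bounded on $[0,T]$ because $Tr<1$. Hence $f(x_t)$ remains bounded as $t\uparrow T$. But $x_T\in\partial\X$ and $f$ is a barrier, so $f(x_t)\to\infty$, a contradiction. Therefore $T>1$, and the same reasoning applies to every $t\in[0,1]$.

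\emph{Step 3: the two-sided Hessian bound.} Fix $h_x$ and set $\psi(t):=D^2f(x_t)[h_x,h_x]$. Then $\psi'(t)=D^3f(x_t)[\dx,h_x,h_x]$. To bound this mixed third derivative I will use the standard polarization consequence of the cubic inequality,
\[
|D^3f(y)[u,v,v]|\le 2\|u\|_y\|v\|_y^2.
\]
This is proved by applying the Cauchy--Schwarz-type lemma for symmetric trilinear forms dominated by a quadratic form, namely that the sup over unit vectors is attained on the diagonal; this is the one genuinely non-routine ingredient. With it, $|\psi'(t)|\le 2\phi(t)^{1/2}\psi(t)\le \frac{2r}{1-tr}\psi(t)$. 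Integrating $|(\ln\psi)'|\le 2r/(1-tr)$ from $0$ to $1$ gives $\psi(1)/\psi(0)\in[(1-r)^2,(1-r)^{-2}]$, which is the claim. If $\psi(0)=0$, Gronwall forces $\psi\equiv0$.

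The main obstacle is the polarization step in Step 3, together with the zero-set subtleties. I would cite the trilinear-form lemma from \cite{nesterov1994interior} rather than reprove it.
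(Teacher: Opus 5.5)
Your proposal is correct and is essentially the standard Nesterov--Nemirovski argument: restrict to the segment, use the $1$-Lipschitz bound on $\phi^{-1/2}$, invoke the barrier property to get $x+\dx\in\X$, and apply the mixed third-derivative bound of \cite[Proposition 9.1.1]{nesterov1994interior} for the two-sided Hessian estimate. The paper gives no proof of its own; it simply cites \cite[Theorem 2.1.1]{nesterov1994interior}, whose proof follows this same route.
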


Each symmetric cone $\K$ admits a natural barrier function (see \cite[Section 2.6]{vieira2007jordan}) $\phi: \interior(\K) \to \R$, defined as 
\begin{equation*}
    \phi(x): = - \ln (\det (x)). 
\end{equation*}
By \cite{hauser2002self}, $\phi \in C^{\infty} (\interior(\K))$ is a nondegenerate standard self-concordant convex function on $\interior(\K)$. For every $x\in \interior(\K)$, the second-order derivative $D^2 \phi (x)\succ 0$. Consequently, the inverse operator $(D^2 \phi(x))^{-1}:\E \to \E$ is well defined and satisfies $(D^2 \phi(x))^{-1} \succ 0$. In particular, the gradient and second-order derivative of $\phi$ satisfy 
\begin{equation}\label{eq:derivatives of phi}
    \nabla \phi(x)=-x^{-1}, \quad \langle \nabla \phi(x), (D^2 \phi (x))^{-1} \nabla \phi(x) \rangle = \nu, \quad \forall\, x\in \interior(\K).
\end{equation}

The subsequent analysis focuses on $\alpha$-self-concordant convex-concave functions. We consider unconstrained minimax problems whose objective is convex in the minimization variables and concave in the maximization variables. To extend Newton's method to this setting, it is essential to identify a class of convex-concave functions that exhibits geometric properties similar to those of self-concordant convex functions. This motivates the concept of $\alpha$-self-concordant convex-concave functions, which generalizes the definition in \cite{nemirovski1999self}.

\begin{definition}\label{def:sccc function}
Let $\hE$ and $\E$ be finite-dimensional Euclidean spaces,
$
f(\hx, s)\in C^3 (\hE \times \E),
$
and $\alpha>0$.
The function $f$ is called $\alpha$-self-concordant convex-concave on $\hE \times \E$ if the following conditions hold:
\begin{enumerate}
    \item $f$ is convex in $\hx \in \hE$ for every $s \in \E$, and concave in $s \in \E$ for every $\hx \in \hE$.
    \item For every $w = (\hx, s) \in \hE \times \E$ and $h = (h_{\hx}, h_s) \in \hE\times \E$,
    \[
    \left| D^3 f(w)[h, h, h] \right| \leq \dfrac{2}{\alpha^{1/2}} \left(  S_f(w) [h,h] \right)^{3/2},  \]
    where
    $
    S_f(w)[h,h] : =
    D^2_{\hx \hx} f(w)[h_{\hx},h_{\hx}] - D^2_{ss} f(w)[h_s,h_s]. 
    $
\end{enumerate}
If $\alpha=1$, $f$ is called standard self-concordant convex-concave. An $\alpha$-self-concordant convex-concave function $f$ is called nondegenerate if the quadratic form $S_f (w) $ is positive definite for all $w\in \hE \times \E$.
\end{definition}

\begin{remark}
 The concept of an $\alpha$-self-concordant convex-concave function can also be defined on an open convex domain (see \cite{nemirovski1999self}). The only difference is that, in that setting, the functions $f(\cdot,s)$ and $-f(\hx,\cdot)$ are required to be barriers, respectively. In contrast, our analysis is carried out on the entire space $\hE \times \E$, which has no boundary. Therefore, no barrier property is required in our definition.
\end{remark}

The following proposition relates nondegenerate $\alpha$-self-concordant convex-concave functions to nondegenerate $\alpha$-self-concordant convex functions.

\begin{proposition}\label{proposition 2}
Let $\hE$ and $\E$ be finite-dimensional Euclidean spaces, and let $f(\hx,s)$ be a nondegenerate $\alpha$-self-concordant convex-concave function on $\hE\times  \E$. Then the following properties hold:
\begin{enumerate}
    \item For every $s\in \E$, $f(\cdot,s)$ is nondegenerate $\alpha$-self-concordant on $\hE$, and for every $\hx\in \hE$, $-f(\hx,\cdot)$ is nondegenerate $\alpha$-self-concordant on $\E$.
    \item For every $w=(\hx,s)\in \hE\times  \E $ and $h_1,\,h_2,\,h_3\in\hE\times \E$,
    $$
    \left| D^3 f(w)[h_1, h_2, h_3] \right| \leq \dfrac{2}{\alpha^{1/2}}\prod_{i=1}^3 \sqrt{  S_f(w) [h_i,h_i] }. 
$$
\end{enumerate}
\end{proposition}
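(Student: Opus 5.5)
Part (i) follows by restricting to directions with one component zero. Fix $s\in\E$ and take $h=(h_{\hx},0)$. Then $S_f(w)[h,h]=D^2_{\hx\hx}f(w)[h_{\hx},h_{\hx}]$ and $D^3f(w)[h,h,h]=D^3_{\hx\hx\hx}f(w)[h_{\hx},h_{\hx},h_{\hx}]$. The defining inequality in Definition~\ref{def:sccc function} therefore becomes exactly the $\alpha$-self-concordance inequality for $g:=f(\cdot,s)$. Convexity of $g$ is part (i) of that definition.

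Nondegeneracy of $g$ also comes from the restriction: positive definiteness of $S_f(w)$ on vectors $(h_{\hx},0)$ with $h_{\hx}\neq0$ gives $D^2 g(\hx)\succ0$. For $-f(\hx,\cdot)$ we use $h=(0,h_s)$ in the same way. Here $S_f(w)[h,h]=-D^2_{ss}f(w)[h_s,h_s]=D^2(-f(\hx,\cdot))[h_s,h_s]$, and the absolute value absorbs the sign change in the third derivative.

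The barrier requirement in the convex definition is vacuous because the domain is the whole space and has no boundary. I would remark on this explicitly, matching the remark after Definition~\ref{def:sccc function}.

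Part (ii) extends the diagonal cubic bound to arbitrary triples. Set $T:=D^3f(w)$, a symmetric trilinear form on $\hE\times\E$, and $Q:=S_f(w)$, a positive definite quadratic form by nondegeneracy. Definition~\ref{def:sccc function}(ii) says $|T[h,h,h]|\le \frac{2}{\alpha^{1/2}}Q[h,h]^{3/2}$ for all $h$. The claim is the classical fact that for a symmetric trilinear form, the norm relative to a Euclidean norm is attained on the diagonal:
\[
\sup_{Q[h_i,h_i]\le1}|T[h_1,h_2,h_3]| = \sup_{Q[h,h]\le1}|T[h,h,h]|.
\]
This is the result of Nesterov--Nemirovskii (Appendix~1 of \cite{nesterov1994interior}), also used in \cite{nemirovski1999self}. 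I would invoke it directly after changing variables by $Q^{1/2}$, which turns $Q$ into the standard inner product. Applying it to normalized vectors $h_i/\sqrt{Q[h_i,h_i]}$ and using homogeneity gives the stated product bound. The degenerate case $h_i=0$ is trivial.

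The main obstacle is justifying this symmetric-multilinear norm equality if a self-contained argument is wanted. The standard route has two steps:
\begin{enumerate}
\item Show that for symmetric bilinear forms $B_u:=T[u,\cdot,\cdot]$, one has $\sup_{\|v\|\le1}|B_u[v,v]|=\|B_u\|$ (the spectral theorem). This reduces the problem to the bilinear-in-one-slot quantity $\sup|T[u,v,v]|$.
\item Show $\sup_{\|u\|,\|v\|\le1}|T[u,v,v]|\le\sup_{\|h\|\le1}|T[h,h,h]|$. Take a maximizing pair $(u,v)$ and use Lagrange stationarity, or polarization with $h=(u\pm v)$-type combinations plus an optimality argument, to show a maximizer can be taken with $u=\pm v$.
\end{enumerate}
Step 2 is the delicate part. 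Plain polarization only yields a constant factor larger than $1$, so the extremal argument is genuinely needed. Since it is a known lemma, I would cite it rather than reprove it.
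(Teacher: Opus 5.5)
Your proposal is correct and matches the paper's proof. Part (i) restricts the defining inequality to directions $(h_{\hx},0)$ and $(0,h_s)$; the paper phrases this through $\varrho(t)=D^2f(w+th)[h,h]$ evaluated at $t=0$, which amounts to the same thing. Part (ii) cites the symmetric-trilinear-form lemma \cite[Proposition 9.1.1]{nesterov1994interior}, exactly as you propose. Your explicit remarks that the barrier condition is vacuous on the whole space, and that the absolute value absorbs the sign change for $-f(\hx,\cdot)$, are small points the paper leaves implicit.
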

\begin{proof}
The proof is provided in Appendix~\ref{proof:Prop 1}.
\end{proof}

Let $f$ be a nondegenerate  $\alpha$-self-concordant convex-concave function on $\hE\times \E$. For any $w=(\hx,s)\in \hE\times \E$ and any $h=(h_{\hx},h_s)\in \hE \times \E$, we define two local norms of $h$ associated with $f$ at $w$ by
\begin{equation}\label{def:norm-for-selfconcordant-convex-concave}
\begin{aligned}
\|h\|_{f,w,\alpha}:=\sqrt{\frac{1}{\alpha} S_f(w)[h,h]},\,\,
\|h\|^{*}_{f,w,\alpha}:=\sqrt{\frac{1} {\alpha} (S_f(w))^{-1}[h,h] },
\end{aligned}
\end{equation}
 where $(S_f (w) )^{-1} [h,h] : = \langle h_{\hx}, (D^2_{\hx \hx} f (w))^{-1} h_{\hx} \rangle - \langle h_s, (D^2_{ss} f(w))^{-1} h_s \rangle$.  Since $\alpha$ is fixed once $f$ is specified, we omit it for brevity and write $\|h\|_{f,w}$ and $\|h\|_{f,w}^*$ as shorthand for $\|h\|_{f,w,\alpha}$ and $\|h\|_{f,w,\alpha}^*$, respectively. 

We introduce three merit functions to measure how far the current iterate is from satisfying the optimality conditions. Specifically, let
\begin{equation}\label{def:merit-function-for-self-concordant-convex-concave-function}
\delta(w):=\|\Delta w\|_{f,w}, \,\, \xi(w):=\|\nabla f(w)\|^*_{f,w},\,\, \theta(w): = \max\limits_{\tilde{s}} f(\hx,\tilde{s}) - \min\limits_{\tilde{x}} f(\tilde{x},s),
\end{equation}
where
$\Delta w := -(D^2 f(w))^{-1}\nabla f(w) $. Following the notation in \cite{nemirovski1999self}, let $K(\theta) := \bigl\{w \mid \theta(w) < +\infty \bigr\}$.  For $w\in K(\theta)$, the optimization problems $\max_{\tilde{s}}f(\hx,\tilde{s})$ and  $\min_{\tilde{x}}f(\tilde{x},s)$ attain global optimal solutions, which are denoted by $s(\hx)$ and $\hx(s)$, respectively. We further define the merit functions:
\begin{equation}\label{merit-function-for-self-concordant-convex-functions}
    \tilde{\delta}_{\hx} (w) = \sqrt{\frac{1}{\alpha} \langle \widetilde{\Delta x}, D_{\hx \hx}^2 f(w) \widetilde{\Delta x}\rangle  },\,\, \tilde{\delta}_s(w)= \sqrt{ -\frac{1}{\alpha} \langle \widetilde{\Delta s}, D_{ss}^2 f(w) \widetilde{\Delta s}\rangle  },
\end{equation}
where $\widetilde{\Delta x}:=\hx-\hx(s)$ and $\widetilde{\Delta s}:=s-s(\hx)$. The connections among these merit functions are summarized in the following theorem.

\begin{theorem}\label{thm:properties-for-self-concordant-convex-concave-function}
    Let $\hE$ and $\E$ be finite-dimensional Euclidean spaces, and let $f: \hE\times  \E \to \mathbb{R}$ be a nondegenerate $\alpha$-self-concordant convex-concave function. For every $w=(\hx,s)\in \hE\times  \E $ and every $h=(h_{\hx},h_s)\in \hE\times\E$, the following statements hold: 
\begin{itemize}    
    \item[(i)] If $r=\|\Delta w\|_{f,w}<1$, then 
\[\label{ineq:bound of S}
(1- {r})^2 S_f(w)[h,h]\le S_f(w+ \Delta w)[h,h]\le \frac{1}{(1-{r})^2}S_f(w)[h,h].
\]
\item[(ii)] $\delta(w)\leq \xi(w) $. 
\item[(iii)] Let $w^+ :=w+\Delta w \in \hE\times \E$. If $\delta(w)<1$, then 
\[
\xi(w^+)\leq \left(\dfrac{\delta(w)}{1-\delta(w)}\right)^2.
\]
Furthermore, if $\delta(w) \le \xi (w) \leq 2-\sqrt{3}$, then $\xi(w^+)\leq \dfrac{\delta(w)}{2} \le \dfrac{\xi(w)}{2}$.
\item[(iv)] If $\xi(w)<\frac{1}{3}$, then 
\begin{equation}\label{eq:maxtdelta}
    \max\{ \tilde{\delta}_{\hx}(w),\tilde{\delta}_s(w)\} \le 1-(1-3 \xi(w))^{\frac{1}{3}}.
\end{equation}
Furthermore, if $\xi(w)\le 0.1$, then $ \max\{ \tilde{\delta}_{\hx}(w),\tilde{\delta}_s(w)\} < 0.2$.
\end{itemize}
\end{theorem}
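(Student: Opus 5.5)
The four parts can be reduced to the one-dimensional restriction technique of Nesterov--Nemirovski, combined with the structure of $D^2 f(w)$ as a saddle Hessian. Throughout, write $H:=D^2 f(w)$ with blocks $A=D^2_{\hx\hx}f(w)\succ 0$, $-C=D^2_{ss}f(w)\prec 0$ and off-diagonal block $B=D^2_{\hx s}f(w)$, and let $S:=S_f(w)=\Diag(A,C)$. All norms are taken with the factor $1/\alpha$, which rescales every constant consistently. Replacing $f$ by $\alpha^{-1}f$ reduces everything to the standard case $\alpha=1$, so I work with $\alpha=1$.

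\textbf{Part (i).} Take $w_t:=w+t\Delta w$ and $g(t):=S_f(w_t)[h,h]$. Then
$$g'(t)=D^3_{\hx\hx\hx}f[\cdot]-D^3_{ss\cdot}f[\cdot],$$
and each term is a third derivative $D^3f(w_t)[\tilde h,\tilde h,\Delta w]$ with $\tilde h=(h_{\hx},0)$ or $(0,h_s)$. By Proposition~\ref{proposition 2}(ii),
$$|g'(t)|\le 2\,g(t)\,\|\Delta w\|_{f,w_t}.$$
Applying the same bound with $h=\Delta w$ gives the scalar inequality $|\frac{d}{dt}\|\Delta w\|_{f,w_t}^{-1}|\le 1$. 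Hence $\|\Delta w\|_{f,w_t}\le r/(1-tr)$, and integrating $|(\ln g)'|\le 2r/(1-tr)$ over $[0,1]$ yields exactly $(1-r)^{\pm2}$. This is the proof of Theorem~\ref{theorem 1} transplanted to $S_f$. Existence of $w_t$ is not an issue, since the domain is the whole space.

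\textbf{Part (ii).} The key algebraic fact is $H^{\top} S^{-1} H\succeq S$, equivalently $\|H^{-1}g\|_S\le\|g\|_{S^{-1}}$, where $\|g\|^2_{S^{-1}}=g^\top S^{-1}g$ with $S^{-1}=\Diag(A^{-1},C^{-1})$. This positive-definite reading of $S^{-1}$ is the correct one; the sign in $(S_f)^{-1}[h,h]$ in the paper's display must be understood this way. I prove it by writing $S^{-1/2}HS^{-1/2}=\begin{pmatrix}I&M\\M^\top&-I\end{pmatrix}=:J$. Then
$$J^2=\begin{pmatrix}I+MM^\top&0\\0&I+M^\top M\end{pmatrix}\succeq I,$$
so $\|J^{-1}\|\le1$, which gives $\delta\le\xi$.

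\textbf{Part (iii).} Write
$$\nabla f(w^+)=\int_0^1\big(D^2f(w_t)-D^2f(w)\big)\Delta w\,dt.$$
For any unit test direction $u$ in the $S_f(w^+)$ norm, bound $|\langle u,(D^2f(w_t)-H)\Delta w\rangle|$ by integrating third derivatives along $\tau\mapsto w+\tau\Delta w$. Proposition~\ref{proposition 2}(ii) together with the metric comparison of part (i) at intermediate points gives
$$\le \int_0^t \frac{2r^2}{(1-\tau r)^3}\,d\tau\cdot\|u\|_{w}.$$
The dual norm at $w^+$ is then converted to the norm at $w$ through part (i), costing a factor $(1-r)^{-1}$. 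Carrying this out as in Nesterov's classical computation should give $\xi(w^+)\le r^2/(1-r)^2$. For the second claim: if $r\le\xi\le 2-\sqrt3$, then $r/(1-r)^2\le 1/2$ because $(2-\sqrt3)/(\sqrt3-1)^2=1/2$. Hence $\xi(w^+)\le r/2\le\xi/2$.

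\textbf{Part (iv).} Fix $s$ and apply the self-concordant convex theory to $\varphi=f(\cdot,s)$, which is self-concordant by Proposition~\ref{proposition 2}(i). The quantity $\tilde\delta_{\hx}$ is the local distance to the minimizer of $\varphi$. The standard estimate bounds it by $\lambda_\varphi/(1-\lambda_\varphi)$-type expressions in the Newton decrement $\lambda_\varphi=\|\nabla_{\hx}f\|_{A^{-1}}\le\xi$. To get the specific form $1-(1-3\xi)^{1/3}$ I would follow Nemirovski's argument: take the ray $\hx+t(\hx(s)-\hx)$ and the function $\psi(t)=\langle\nabla_{\hx}f,\widetilde{\Delta x}\rangle$-type derivative. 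The third-derivative bound gives a differential inequality $\rho'\le(1-\rho)^{-2}\cdot$ for the local norm, whose integration produces $(1-\rho)^3\ge1-3\xi$. Existence of $\hx(s)$ for $\xi<1/3$ follows because self-concordant functions with decrement below $1$ attain their minimum. The case of $s$ is symmetric. The final numeric claim is a check: $1-0.7^{1/3}\approx0.112<0.2$.

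The main obstacle is part (iii) together with the exact constant in (iv). Both require careful tracking of the metric change in the saddle setting, where $H$ is indefinite. The identity $J^2\succeq I$ is what makes the convex arguments carry over.
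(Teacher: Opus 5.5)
Your proposal is correct. For (iv) it matches the paper's argument. The appendix applies the convex estimate of Nesterov--Nemirovski (their Eq.~(2.2.31)) to $f(\cdot,s)$ and $-f(\hx,\cdot)$, which are self-concordant by Proposition~\ref{proposition 2}(i). It then uses $\xi_{\hx}(w),\xi_s(w)\le\xi(w)$ and the monotonicity of $t\mapsto 1-(1-3t)^{1/3}$.

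For (i)--(iii) the paper gives no argument of its own and only cites Nemirovski's Propositions 2.3 and 5.1, whereas you reprove them:
\begin{itemize}
\item for (i), the comparison $|(\ln g)'|\le 2\|\Delta w\|_{f,w_t}\le 2r/(1-tr)$;
\item for (ii), the block identity $J^2=\Diag(I+MM^\top,\,I+M^\top M)\succeq I$;
\item for (iii), the bound $\int_0^1\!\int_0^t 2r^2(1-\tau r)^{-3}\,d\tau\,dt=r^2/(1-r)$, followed by the factor $(1-r)^{-1}$ from (i) when passing to the dual norm at $w^+$.
\end{itemize}
The last step indeed gives $(r/(1-r))^2$. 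Your proof of (i) also yields the intermediate-point version with $\tau r$ in place of $r$, which is what (iii) needs. All of these steps check out. Your route gains self-containment and shows why the indefiniteness of $D^2f(w)$ costs nothing: the $J^2\succeq I$ step is exactly what replaces positive definiteness. The citation gains brevity.

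There are two small remarks.

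\textbf{Part (iv).} You do not need the half-sketched differential inequality. Once the minimizer exists (decrement below $1$), the standard bound $\tilde\delta_{\hx}(w)\le\lambda/(1-\lambda)$ with $\lambda=\xi_{\hx}(w)$ already implies the stated form. Indeed, $(1-2\lambda)^3-(1-3\lambda)(1-\lambda)^3=\lambda^3(2-3\lambda)\ge0$, which gives $\lambda/(1-\lambda)\le 1-(1-3\lambda)^{1/3}$ on $[0,1/3)$.

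\textbf{The sign of $(S_f(w))^{-1}$.} The paper's definition of $(S_f(w))^{-1}[h,h]$ needs no reinterpretation. Since $D^2_{ss}f(w)\prec0$, the term $-\langle h_s,(D^2_{ss}f(w))^{-1}h_s\rangle$ is already nonnegative. So the paper's expression is exactly your positive-definite $\Diag(A^{-1},C^{-1})$ form.
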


\begin{proof}
The proofs of $(i)$--$(iii)$ follow directly from \cite[Propositions 2.3 and 5.1]{nemirovski1999self}. The proof of $(iv)$ is provided in Appendix~\ref{proof:Thm 3}.
\end{proof}

\section{A minimax reformulation of the smoothing Newton method}\label{sec:equivalence characterization}

In this section, we first reformulate the generalized parameterized smooth system as the first-order optimality conditions of a minimax problem. By eliminating the multiplier and the auxiliary variable, we obtain a reduced barrier augmented Lagrangian function $\eta_\rho$. We then show that $\eta_\rho$ is a nondegenerate $\mu$-self-concordant convex-concave function. Finally, we establish
that the generalized parameterized smooth system is equivalent to the first-order optimality conditions of the minimax problem defined by $\eta_\rho$, and that the Newton system of SNM is equivalent to the corresponding Newton system for that minimax problem.

\subsection{From the smoothing CHKS function to the reduced BAL function}

We begin with the smoothing CHKS function
$$\Phi(x,s;\mu)=x+s-\left((x-s)^2+4\mu e\right)^{1/2},$$
and then introduce its generalized form $\Phi_\rho$.

Given any point $(x,s)\in \E\times \E$ and $\mu>0$, the classical SNM \cite{engelke2002predictor,kanzow2002semidefinite,liu2006analysis} based on $\Phi$ for problem~\eqref{SCP} (inexactly) solves the parameterized smooth system
\begin{equation}\label{CHKS-SNM-nonlinear-equation}
    \A x  = b,\; \A^*\lambda+s = c,\; \Phi(x,s;\mu) = 0.
\end{equation}
Applying Newton's method to \eqref{CHKS-SNM-nonlinear-equation} yields the linearized system:
\begin{equation}\label{CHKS-SNM-Newton direction}
\begin{pmatrix}
    \A & 0 & 0\\
    0 & \I_{\E} & \A^*\\
    D_x \Phi(x,s;\mu) & D_s \Phi(x,s;\mu) & 0 
\end{pmatrix}
\begin{pmatrix}
    \dx\\
    \ds\\
    \dlam
\end{pmatrix}
= -
\begin{pmatrix}
    \A x - b\\
    \A^* \lambda + s -c\\ 
    \Phi (x,s;\mu)
\end{pmatrix},
\end{equation}
where $\I_{\E}: \E \to \E$ denotes the identity operator on $\E$. The classical SNM proceeds as follows. Starting from  $(x^{(0)},s^{(0)},\lambda^{(0)})$, it computes the Newton direction given by \eqref{CHKS-SNM-Newton direction}, performs a line search along this direction at each iteration, and progressively decreases $\mu$ toward zero.

To generalize the parameterized smooth system, we derive an equivalent characterization of  $\Phi$.
Let $\phi$ denote the natural barrier for $\K$.
Given a fixed $\rho > 0$, consider the following optimization problem:
\begin{equation}\label{proximal-z}
    \min_{z\in\interior(\K)}
	\Big\{\mu\phi(z)+\langle s,z\rangle+\dfrac{\rho}{2}\|z-x\|^2\Big\}.
\end{equation}
Here $\rho$ serves as the proximal regularization parameter in the quadratic term $\frac{\rho}{2}\|z-x\|^2$. For every fixed $\rho>0$, the associated proximal problem \eqref{proximal-z} is well defined and admits a unique solution. The special choice $\rho=1$ recovers the classical  smoothing CHKS function,
whereas allowing arbitrary $\rho>0$ yields a family of smoothing CHKS-type functions.

The unique solution of \eqref{proximal-z} is the proximal point of the proper closed convex function $\mu \phi (\cdot) + \langle s, \cdot \rangle$ at $x$; see \cite[Theorem 6.3]{beck2017first}. We denote this solution by $z_\rho(x,s;\mu)$. Then $$
z_\rho(x,s;\mu)=\frac{ \rho x-s + \left((\rho x-s)^2 + 4 \rho\mu e\right)^{1/2}  }{2\rho} \in \interior(\K),
$$ 
and satisfies the optimality condition
\begin{equation}
	\label{eq:z-stationary}
	\mu\nabla\phi(z_\rho(x,s;\mu)) + s + \rho(z_\rho(x,s;\mu)-x)=0.
\end{equation}
Furthermore, $z_\rho (x,s;\mu)$ is a function of $(x,s;\mu) \in \E\times \E\times \R_{++}$. For brevity, we write $z_\rho$ instead of $z_\rho(x,s;\mu)$ whenever no confusion arises.  The natural generalization of $\Phi$ is then defined by
\begin{equation}\label{def:Phi}
    \Phi_\rho(x,s;\mu): = 2(x - z_\rho(x,s;\mu) ), \quad \forall\, x,s \in \E,\ \mu>0,\ \text{and }\rho> 0. 
\end{equation}
In particular, $\Phi (x,s;\mu) = \Phi_1 (x,s;\mu)$.  The parameterized smooth system~\eqref{CHKS-SNM-nonlinear-equation} generalizes to
\begin{equation}
\label{SNM-nonlinear-equation}
\A x=b,\;
\A^*\lambda + s  =c,\;
\Phi_\rho(x,s;\mu)=0.
\end{equation}
Applying Newton's method to \eqref{SNM-nonlinear-equation} yields
\begin{equation}\label{SNM-Newton direction}
\begin{pmatrix}
    \A & 0 & 0\\
    0 & \I_{\E} & \A^*\\
    D_x \Phi_\rho (x,s;\mu) & D_s \Phi_\rho (x,s;\mu) & 0 
\end{pmatrix}
\begin{pmatrix}
    \dx\\
    \ds\\
    \dlam
\end{pmatrix}
= -
\begin{pmatrix}
    \A x -b\\
    \A^* \lambda + s -c\\ 
    \Phi_\rho (x,s;\mu)
\end{pmatrix}.
\end{equation}

The reformulation~\eqref{SNM-nonlinear-equation} extends the parameterized smooth system~\eqref{CHKS-SNM-nonlinear-equation}, with the classical smoothing  CHKS function recovered as the special case $\rho=1$. For simplicity, we continue to refer to the resulting method as an SNM.

We next show that \eqref{SNM-nonlinear-equation} can be naturally related to a minimax problem, thereby further clarifying the optimization interpretation of $\Phi_\rho$. Indeed, \eqref{SNM-nonlinear-equation} is equivalent to 
    \begin{equation}\label{minimax-KKT}
			\A x  =b,\; 
			c-s +  \frac{\rho}{2} \Phi_\rho (x,s;\mu)-\A^* \lambda = 0,\;
			\frac{1}{2}\Phi_\rho (x,s;\mu)  = 0.
\end{equation}
Combining \eqref{eq:z-stationary}, \eqref{def:Phi}, and \eqref{minimax-KKT} yields the system
\begin{equation*}
			\A x  =b,\; 
			 c-s - \rho(z - x) -\A^* \lambda = 0,\;
			z-x  = 0, \; \mu \nabla \phi(z) + s + \rho(z-x) = 0.
\end{equation*}
This system is precisely the system of first-order optimality conditions of the following minimax problem, whose objective is the \textit{barrier augmented Lagrangian} (BAL) function~$L_\rho$:
\begin{equation}\label{minimax-problem-1}
			\begin{aligned}
				\min\limits_{z \in \interior(\K),x}\max\limits_{s,\lambda} \ \left\{L_\rho(x,z,s,\lambda;\mu)\right\},\\
			\end{aligned}
		\end{equation}
where 
$
L_\rho(x,z,s,\lambda;\mu) := \langle {c}, {x}\rangle+\mu\phi(z)- \langle \lambda, \A x -b \rangle + \langle s,z-x\rangle+\dfrac{\rho}{2}\left\|z-x\right\|^2.$
The idea underlying the BAL function can be traced back to~\cite{liu2020globally}, and was further developed in~\cite{liu2022primal,liu2023novel,zhang2023iprqp,zhang2024iprsdp,zhang2026iprsocp}.

Reformulation~\eqref{minimax-problem-1} reveals that the parameterized smooth
system~\eqref{SNM-nonlinear-equation} can be interpreted as the first-order
optimality conditions of the minimax problem~\eqref{minimax-problem-1}. This suggests studying the SNM through the BAL function $L_\rho$. However,
$L_\rho$ is not suitable for direct self-concordant convex-concave analysis.
Indeed, it is degenerate in both the multiplier $\lambda$ and the $s$-variable, since $
D_{\lambda\lambda}^2 L_\rho(x,z,s,\lambda;\mu)=0
$
and
$
D_{ss}^2 L_\rho(x,z,s,\lambda;\mu)=0.
$ 
Therefore, we first restrict the formulation to the affine constraint $\A x=b$. Since $\A$ is surjective, the linear system $\A x = b$ admits a solution for the given $b$. Fix an arbitrary feasible point $\bar{x}$ such that $\A\bar{x}=b$. Let $\hE$ be a finite-dimensional Euclidean space such that 
$
{\rm dim}\, \hE = {\rm dim}\, {\rm ker}\, \A.
$
Then there exists an injective linear operator $\B : \hE \rightarrow \E$ such that 
$$
\A \B=0, \ \quad \B^* \B=\I_{\hE}.
$$  
Every feasible point $x$ satisfying $\A x = b$ can be written uniquely as 
$$
x=\bar{x}+\B \hat{x}, \quad \hat{x}\in \hE.
$$ 
Under this parametrization, the constraint term
$-\langle \lambda,\A x-b\rangle$ vanishes identically, and the multiplier
$\lambda$ no longer appears. Thus, the degeneracy with respect to $\lambda$ is removed.

We then eliminate the auxiliary variable
$z$. This reduction also removes the degeneracy in the $s$-variable and leads to a reduced
formulation suitable for a self-concordant convex-concave analysis. Define  $\eta_\rho (\cdot, \cdot; \mu) : \hE \times \E \rightarrow \R$ by 
\begin{equation}\label{def:eta}
\begin{aligned}
         \eta_\rho(\hx,s;\mu): =  \min\limits_{z\in \interior(\K)} \left\{ L_\rho (\bx + \B \hx,z,s,\lambda;\mu) \right\}.
\end{aligned}
\end{equation}
The value in $\eta_\rho(\hx,s;\mu)$ is independent of $\lambda$, because
$\A(\bar x+\B\hat x)=b$. Writing $x=\bar x+\B\hat x$, the unique minimizer in
\eqref{def:eta} is $z_\rho(x,s;\mu)$. Hence
\[
\eta_\rho(\hat x,s;\mu)
=
\langle c,x\rangle
+\mu\phi(z_\rho(x,s;\mu))
+\langle s,z_\rho(x,s;\mu)-x\rangle
+\frac{\rho}{2}\|z_\rho(x,s;\mu)-x\|^2 .
\]
We call $\eta_\rho$ the \textit{reduced BAL} function. In the sequel, we write 
$$
    w:=(\hx,s) \in \hE \times \E
$$
for the reduced minimax variable whenever no confusion can arise.
The resulting reduced minimax problem is
\begin{equation}\label{minimax-eta}
    \min_{\hat x\in\hE}\max_{s\in\E}
    \left\{ \eta_\rho(w;\mu) \right\}.
\end{equation}
Compared with the original BAL function $L_\rho$, the reduced function
$\eta_\rho$ removes the affine multiplier degeneracy and, after eliminating
$z$, has a nondegenerate curvature structure in both the minimization and
maximization variables. These structural properties are established in the next
subsection. In the remainder of the paper, we work with the reduced minimax
problem~\eqref{minimax-eta} and the function $\eta_\rho$.

\subsection{Properties of the reduced BAL function}

In this subsection, we discuss the properties of the reduced BAL function $\eta_\rho$. Recall that $z_\rho(x,s;\mu)$ satisfies
\begin{equation}\label{z-equation}
	\mu\nabla \phi(z_\rho(x,s;\mu))+s+\rho(z_\rho(x,s;\mu)-x)=0.
\end{equation}
Define the adjoint variable associated with $z_\rho(x,s;\mu)$ by
\begin{equation}\label{y-equation}
y_\rho(x,s;\mu):=s+\rho(z_\rho(x,s;\mu)-x).
\end{equation}
The variable $y_\rho (x,s;\mu)$ is also a function of $(x,s;\mu)\in \E\times \E\times \R_{++}$. Furthermore,
both $z_\rho(x,s;\mu)$ and $y_\rho(x,s;\mu)$ satisfy the following properties.
\begin{lemma}\label{Lemma:z-x=0}
For any scalars $\mu>0$ and $\rho > 0$, the following statements are equivalent:
\begin{equation}
\text{(i)}\; z_\rho(x,s;\mu) = x,\,
\text{(ii)}\; y_\rho(x,s;\mu) = s,\,
\text{(iii)}\; x,\, s\in \interior(\K),\, x \circ s = \mu e.
\end{equation}
\end{lemma}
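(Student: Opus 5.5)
We prove the cycle of implications using only the stationarity condition \eqref{z-equation}, the definition \eqref{y-equation}, the fact that $z_\rho\in\interior(\K)$, and $\nabla\phi(z)=-z^{-1}$ from \eqref{eq:derivatives of phi}. Throughout we write $z:=z_\rho(x,s;\mu)$ and $y:=y_\rho(x,s;\mu)$. The first observation is that, by \eqref{z-equation} and \eqref{y-equation}, we have $y=-\mu\nabla\phi(z)=\mu z^{-1}$. Since $z\in\interior(\K)$, this gives $y\in\interior(\K)$ and $z\circ y=\mu\, z\circ z^{-1}=\mu e$. Here $z$ and $z^{-1}$ share the same spectral idempotents, so their Jordan product is computed eigenvalue-wise.

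\emph{(i)$\Leftrightarrow$(ii).} By \eqref{y-equation}, $y-s=\rho(z-x)$. Since $\rho>0$, $z=x$ holds if and only if $y=s$.

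\emph{(i)$\Rightarrow$(iii).} If $z=x$, then also $y=s$ by the previous step. Hence $x=z\in\interior(\K)$, $s=y=\mu z^{-1}\in\interior(\K)$, and $x\circ s=z\circ y=\mu e$.

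\emph{(iii)$\Rightarrow$(i).} This is the only step needing care. Suppose $x,s\in\interior(\K)$ and $x\circ s=\mu e$. We want to deduce $s=\mu x^{-1}$. In a general Euclidean Jordan algebra, $x\circ s=\mu e$ does not by itself trivially give $s=\mu x^{-1}$ without an invertibility argument. We use that for $x\in\interior(\K)$ the operator $L_x:u\mapsto x\circ u$ is invertible, since its eigenvalues are $(\lambda_i+\lambda_j)/2>0$. Because $L_x(\mu x^{-1})=\mu e$, invertibility forces $s=\mu x^{-1}$.

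Then $z'=x$ satisfies the stationarity condition
\[
\mu\nabla\phi(x)+s+\rho(x-x)=-\mu x^{-1}+\mu x^{-1}=0 .
\]
Problem \eqref{proximal-z} has an objective that is strictly convex on $\interior(\K)$, so its minimizer is unique and characterized by this first-order condition. Therefore $z_\rho(x,s;\mu)=x$, which is (i).

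The main potential obstacle is the identification $s=\mu x^{-1}$ from $x\circ s=\mu e$. We handle it through the invertibility of $L_x$ on $\interior(\K)$, a standard fact from \cite{faraut1994analysis}. Alternatively, one can substitute $s=\mu x^{-1}$ into the explicit formula for $z_\rho$ and simplify using the commuting spectral decomposition.
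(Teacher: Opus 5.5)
Your proof is correct. The directions (i)$\Leftrightarrow$(ii) and (i)$\Rightarrow$(iii) match the paper: it likewise reads off $\mu\nabla\phi(x)+s=0$, i.e.\ $s=\mu x^{-1}$, from \eqref{z-equation} once $z_\rho=x$.

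You take a genuinely different route for (iii)$\Rightarrow$(i).

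\textbf{The paper's route.} It substitutes $x\circ s=\mu e$ into the closed-form expression for $z_\rho$. It then observes that $(s-\rho x)^2+4\rho\mu e=(s+\rho x)^2$ and takes the square root, implicitly using $s+\rho x\in\interior(\K)$. This gives $z_\rho=x$ by direct computation.

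\textbf{Your route.} You first convert $x\circ s=\mu e$ into $s=\mu x^{-1}$, using invertibility of $L_x$ (its Peirce eigenvalues are $(\lambda_i+\lambda_j)/2>0$). You then check that $x$ satisfies the first-order condition \eqref{z-equation}. Finally, you invoke uniqueness of the minimizer of the strictly convex problem \eqref{proximal-z}.

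\textbf{Comparison.} The paper's computation is shorter and needs no Peirce-decomposition fact. However, it relies on the explicit formula for $z_\rho$, which is tied to the log-det barrier. Your argument uses only the variational characterization of $z_\rho$. It therefore shows more generally that $z_\rho(x,s;\mu)=x$ if and only if $x\in\interior(\K)$ and $s=-\mu\nabla\phi(x)$. The only Jordan-algebraic input is isolated in the single step $x\circ s=\mu e\Rightarrow s=\mu x^{-1}$.

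One minor point: avoid writing $z'$ for the candidate point, since the paper reserves the prime for derivatives with respect to $\mu$.
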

\begin{proof}
The equivalence $(i) \Longleftrightarrow (ii)$ follows directly from the definition of $y_\rho(x,s;\mu)$. It suffices to prove that $(i) \Longleftrightarrow (iii)$. 

Suppose that $(iii)$ holds. By \eqref{z-equation},
\begin{equation*}
      z_\rho(x,s;\mu)  = \frac{\rho x - s + \left(   (s-\rho x)^2 + 4\rho \mu e\right)^{1/2}}{2\rho}.\\
\end{equation*}
Since $x\circ s = \mu e$, 
\begin{equation*}
\begin{aligned}
      z_\rho(x,s;\mu) &= \frac{\rho x - s + \left(   s^2 - 2\rho s \circ x + \rho^2 x^2 + 4\rho \mu e \right)^{1/2}}{2\rho} = x,\\
\end{aligned}
\end{equation*}
which establishes $(i)$.

Conversely, assume that $(i)$ holds. By \eqref{z-equation} and the inclusions $z_\rho, x\in \interior(\K)$, 
\begin{equation}\label{eq:nablaz-phi+s=0}
    \mu \nabla \phi(x) + s = 0.
\end{equation}
Combining \eqref{eq:derivatives of phi} and \eqref{eq:nablaz-phi+s=0} yields $s \in \interior(\K)$ and $x\circ s = \mu e$. This establishes $(iii)$ and completes the proof.
\end{proof}

Define the linear operators
\begin{equation}
{\W}=\dfrac{\mu}{\rho}D^2\phi(z_\rho(x,s;\mu)), \quad {\H}=\I_{\E}+\W.
\end{equation} 
Since the natural barrier $\phi$ is strictly convex on $\interior(\K)$, we have 
$$
\H \succ \W \succ 0, \quad \H \succ \I_{\E}.
$$
For brevity, all derivatives with respect to $\mu$ are denoted by a prime.
For example, $z_\rho'(x,s;\mu) := D_\mu z_\rho(x,s;\mu)$.
When no ambiguity arises, we also abbreviate $y_\rho(x,s;\mu)$
as $y_\rho$. The following theorem characterizes the derivatives of $z_\rho (x,s;\mu)$ and $y_\rho(x,s;\mu)$.
\begin{theorem}\label{thm:differentiability}
	For any $\rho> 0$, the functions  $z_\rho(x,s;\mu)$ and $y_\rho(x,s;\mu)$ are smooth with respect to $(x,s,\mu)$ on $\E\times\E\times\R_{++}$. Furthermore, their partial derivatives with respect to $(x,s)$ are given by
    \begin{equation}
        \begin{array}{llll}
           & D_x z_\rho(x,s;\mu) =\H^{-1},
			&	D_x y_\rho(x,s;\mu) =-\rho \H^{-1}\W,\\ 
            & D_s z_\rho(x,s;\mu) =-\rho^{-1}\H^{-1},
			 & D_s y_\rho(x,s;\mu) =\H^{-1}\W,\\ 
        \end{array}
    \end{equation}
    and the derivatives with respect to $\mu$ are given by 
    \begin{equation}
        \begin{aligned}
            &z'_\rho(x,s;\mu) =-\rho^{-1}\H^{-1}\nabla\phi(z_\rho(x,s;\mu)),\\
		& y'_\rho(x,s;\mu) =-\H^{-1}\nabla \phi(z_\rho(x,s;\mu)).
        \end{aligned}
    \end{equation}
\end{theorem}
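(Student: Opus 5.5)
We will use the implicit function theorem on the optimality condition \eqref{z-equation}. Define
\[
F(z,x,s,\mu):=\mu\nabla\phi(z)+s+\rho(z-x), \qquad (z,x,s,\mu)\in\interior(\K)\times\E\times\E\times\R_{++}.
\]
Since $\phi\in C^\infty(\interior(\K))$, the map $F$ is $C^\infty$. For each $(x,s,\mu)$, the point $z_\rho(x,s;\mu)$ is the unique zero of $F(\cdot,x,s,\mu)$ in $\interior(\K)$, because the strictly convex proximal problem \eqref{proximal-z} has a unique minimizer characterized by \eqref{eq:z-stationary}.

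The partial derivative of $F$ with respect to $z$ is
\[
D_zF=\mu D^2\phi(z)+\rho\I_{\E}=\rho(\I_{\E}+\W)=\rho\H.
\]
This operator is invertible, since $\H\succ\I_{\E}\succ0$. The implicit function theorem therefore gives a $C^\infty$ local solution map near any point. By uniqueness of the zero, this local map coincides with $z_\rho$, so $z_\rho$ is smooth on all of $\E\times\E\times\R_{++}$. Then $y_\rho=s+\rho(z_\rho-x)$ is smooth as a composition. An alternative route is the explicit formula for $z_\rho$, whose square-root argument $(\rho x-s)^2+4\rho\mu e$ lies in $\interior(\K)$, where the square root is smooth. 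The implicit-function route is cleaner, however.

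The derivatives follow from differentiating $F(z_\rho,x,s,\mu)=0$:
\[
D_xz_\rho=-(\rho\H)^{-1}D_xF=-(\rho\H)^{-1}(-\rho\I_{\E})=\H^{-1},
\]
\[
D_sz_\rho=-(\rho\H)^{-1}\I_{\E}=-\rho^{-1}\H^{-1},
\]
\[
z'_\rho=-(\rho\H)^{-1}\nabla\phi(z_\rho)=-\rho^{-1}\H^{-1}\nabla\phi(z_\rho).
\]
For $y_\rho$ we differentiate its definition and use $\H^{-1}\H=\I_{\E}$ with $\H-\I_{\E}=\W$:
\[
D_xy_\rho=\rho(\H^{-1}-\I_{\E})=\rho\H^{-1}(\I_{\E}-\H)=-\rho\H^{-1}\W,
\]
\[
D_sy_\rho=\I_{\E}-\H^{-1}=\H^{-1}(\H-\I_{\E})=\H^{-1}\W,
\]
\[
y'_\rho=\rho z'_\rho=-\H^{-1}\nabla\phi(z_\rho).
\]

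The only step needing care is the global smoothness claim, namely that the local implicit solutions patch together into $z_\rho$. Uniqueness of the minimizer of \eqref{proximal-z} settles this. We also need the continuity of $z_\rho$ that the implicit function theorem requires in order to identify the branches. This follows because the local solution is continuous and agrees with the unique zero at every point of its neighborhood. The remaining steps are mechanical.
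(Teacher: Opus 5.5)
Your proof is correct and follows essentially the same route as the paper: you apply the implicit function theorem to $\mu\nabla\phi(z)+s+\rho(z-x)=0$ with $D_zF=\rho\H$ invertible, then differentiate implicitly. The paper instead differentiates the pair $y_\rho=s+\rho(z_\rho-x)$, $\mu\nabla\phi(z_\rho)+y_\rho=0$ and solves, whereas you read off $-(\rho\H)^{-1}D_{(x,s,\mu)}F$ directly, which is the same computation; your care in patching the local solutions is a welcome addition, though global uniqueness of the zero in $\interior(\K)$ already identifies the local implicit solution with $z_\rho$, so no separate continuity argument for $z_\rho$ is needed.
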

\begin{proof}
 Recall that $\phi \in C^\infty(\interior(\K))$ and that $z_\rho(x,s;\mu)$ is defined as the unique solution to~\eqref{z-equation}. Since
\[
\mu D^2 \phi(z_\rho(x,s;\mu)) + \rho \I_{\E} = \rho \H \succ 0, \quad \forall\, (x,s,\mu) \in \E\times \E \times \R_{++},
\]
 the derivative of the mapping 
 $$
    F_\rho(x,z,s;\mu) = \mu \nabla \phi(z) + s + \rho (z-x) \in C^\infty (\E \times \E \times \E\times \R_{++}, \E)
 $$
 with respect to $z$ at $z=z_\rho (x,s;\mu)$ is nonsingular. Hence, the implicit function theorem implies $z_\rho\in C^\infty (\E \times \E\times \R_{++},\E)$.
By definition, 
\begin{equation}\label{equation 1}
	y_\rho(x,s;\mu)=s+\rho(z_\rho(x,s;\mu)-x), 
\end{equation}
 which implies that $y_\rho\in C^\infty(\E \times \E\times \R_{++},\E)$. Moreover, $y_\rho$ satisfies
 \begin{equation}\label{equation 2}
 	\mu\nabla \phi(z_\rho(x,s;\mu))+y_\rho(x,s;\mu)=0.
 \end{equation}
  Differentiating both sides of \eqref{equation 1} and \eqref{equation 2} with respect to $x$  yields 
  \begin{equation*}
  	D_x y_\rho(x,s;\mu)=\rho(D_x z_\rho(x,s;\mu)-\I_{\E})
  \end{equation*}
and 
 \begin{equation*}
	\mu D^2\phi(z_\rho(x,s;\mu)) D_x z_\rho(x,s;\mu)+D_x y_\rho(x,s;\mu)=0.
\end{equation*}
Consequently, we have
\begin{equation*}
	D_x z_\rho(x,s;\mu)=\rho(\mu D^2\phi(z_\rho(x,s;\mu))+\rho \I_{\E})^{-1}=\H^{-1}
\end{equation*}
 and 
 \begin{equation*}
 \begin{aligned}
       	D_x y_\rho(x,s;\mu) &=-\rho \left(\dfrac{\mu}{\rho}D^2\phi(z_\rho(x,s;\mu))+\I_{\E}\right)^{-1} \left(\dfrac{\mu}{\rho}D^2\phi(z_\rho(x,s;\mu))\right)\\
       	& =-\rho \H^{-1}\W.
 \end{aligned}
 \end{equation*}
The remaining identities can be proved in the same way.
\end{proof}

By Theorem~\ref{thm:differentiability} and \eqref{SNM-Newton direction}, the search direction generated by the SNM is equivalently written as the solution of the linear system
\begin{equation}\label{SNM-Newton-direction-rho}
    \begin{pmatrix}
    \A & 0 & 0\\
    0 & \I_{\E} & \A^*\\
    \H^{-1}\W & \rho^{-1} \H^{-1} & 0
    \end{pmatrix}
    \begin{pmatrix}
    \dx\\
    \ds\\
    \dlam
    \end{pmatrix}
    = -
    \begin{pmatrix}
    \A x -b \\
    \A^* \lambda + s -c\\
    x - z_\rho
    \end{pmatrix}.
\end{equation}
The following proposition guarantees the uniqueness of this search direction.

\begin{proposition}\label{prop:uniqueness-of-SNM-Newton}
For any point $(x,s,\lambda)\in \E \times \E \times \R^m$ and scalars $\mu>0$, $\rho > 0$, the Newton system~\eqref{SNM-Newton-direction-rho} generated by the SNM admits a unique solution.
\end{proposition}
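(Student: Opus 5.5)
Since the system~\eqref{SNM-Newton-direction-rho} is square and linear, existence and uniqueness of its solution both follow once we show that the homogeneous system has only the trivial solution. So set the right-hand side to zero and suppose $(\dx,\ds,\dlam)$ satisfies
\[
\A\dx=0,\qquad \ds+\A^*\dlam=0,\qquad \H^{-1}\W\dx+\rho^{-1}\H^{-1}\ds=0.
\]
We aim to show that $\dx=0$, $\ds=0$ and $\dlam=0$.

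First, multiply the third equation on the left by $\H$, which is invertible because $\H\succ \I_{\E}$. This gives $\ds=-\rho\W\dx$. Substituting into the second equation yields $\rho\W\dx=\A^*\dlam$.

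Next, take the inner product of this identity with $\dx$ and use $\A\dx=0$:
\[
\rho\langle \dx,\W\dx\rangle=\langle \A\dx,\dlam\rangle=0.
\]
Since $\W=\frac{\mu}{\rho}D^2\phi(z_\rho)\succ 0$, this forces $\dx=0$. It then follows that $\ds=-\rho\W\dx=0$, and the second equation reduces to $\A^*\dlam=0$. Because $\A$ is surjective, $\A^*$ is injective, so $\dlam=0$.

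Hence the coefficient operator has trivial kernel. Being a linear map from $\E\times\E\times\R^m$ to itself, it is therefore bijective, and the system has a unique solution for every right-hand side. None of these steps is a genuine obstacle; the only points needing care are the positive definiteness of $\W$, which relies on $z_\rho\in\interior(\K)$ and the nondegeneracy of $\phi$, and the invertibility of $\H$. Both were established before Theorem~\ref{thm:differentiability}.
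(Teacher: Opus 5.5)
Your proof is correct. It begins the same way as the paper's: you solve the third block row for $\ds$ using the invertibility of $\H$ and substitute into the second row.

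The two arguments then close differently. The paper keeps the actual right-hand side and reduces to the saddle-point system \eqref{SNM-unique-Shur-complement}. It then invokes the block-elimination fact that this matrix is invertible because its Schur complement $\rho^{-1}\A\W^{-1}\A^*$, taken relative to the invertible block $-\rho\W$, is invertible ($\W^{-1}\succ 0$ and $\A$ surjective).

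You instead pass to the homogeneous system and show its kernel is trivial. The identity $\rho\langle \dx,\W\dx\rangle=\langle \A\dx,\dlam\rangle=0$ forces $\dx=0$, and injectivity of $\A^*$ then gives $\dlam=0$.

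What each route buys:
\begin{itemize}
\item Your closing step is more elementary. It uses only positivity of $\langle h,\W h\rangle$ on $\ker\A$ and injectivity of $\A^*$, with no Schur-complement lemma.
\item The paper's route is constructive. It produces the reduced equation in $\A\W^{-1}\A^*$ that is later used in \eqref{Schur complement system} to actually compute the direction.
\end{itemize}

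One small imprecision: the coefficient operator maps $\E\times\E\times\R^m$ to $\R^m\times\E\times\E$, not to itself. Since the two spaces have the same finite dimension, injectivity still gives bijectivity, so the conclusion stands.
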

\begin{proof}
The third equation in \eqref{SNM-Newton-direction-rho} gives 
\begin{equation*}
    \ds = - \rho \W \dx - \rho \H (x-z_\rho).
\end{equation*}
Substituting this expression into the remaining equations, it suffices to verify the uniqueness of the solution to
\begin{equation}\label{SNM-unique-Shur-complement}
    \begin{pmatrix}
    -\rho \W & \A^* \\
    \A & 0
    \end{pmatrix}
    \begin{pmatrix}
    \dx\\
    \dlam 
    \end{pmatrix}
    =- \begin{pmatrix}
    \rho \H (z_\rho-x)  + \A^* \lambda + s -c \\
    \A x - b 
    \end{pmatrix}.
\end{equation}
Since $-\rho \W$ is invertible, the Schur complement of $\begin{pmatrix}
    -\rho \W & \A^* \\
    \A & 0
    \end{pmatrix}$ relative to $-\rho \W$ is $\rho^{-1} \A \W^{-1} \A^*$. Moreover, $\W^{-1} \succ 0$ and $ \A$ is surjective. Therefore, $\rho^{-1} \A \W^{-1} \A^*$ is invertible and the system~\eqref{SNM-unique-Shur-complement} admits a unique solution. 
\end{proof}

The next corollary provides explicit formulas for  the first- and second-order derivatives of 
$\et$ with respect to $\hx$ and $s$.
\begin{corollary}\label{coro:differentiability-and-derivatives-of-eta}
	For any $\rho > 0$, the reduced BAL function $\et$ is smooth on $\hE\times \E\times \R_{++}$. 
    Furthermore, for $x=\bx+\B\hx$,
    \begin{equation}
    \nabla_{\hx} \et = \B^* (c-y_\rho(x,s;\mu)),\,\nabla_s \et =z_\rho(x,s;\mu)-x,
    \end{equation}
    and
    \begin{equation}
    \begin{aligned}
    D^2_{\hx \hx} \et &=\rho \B^* \H^{-1}\W \B,\quad          D^2_{ss} \et =-\rho^{-1}\H^{-1},\\
    D^2_{\hx s}\et
&=
-\B^*\H^{-1}\W,\quad D^2_{s\hx}\et
=
-\H^{-1}\W\B.
    \end{aligned}
    \end{equation}
\end{corollary}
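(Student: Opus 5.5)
We propose to obtain smoothness by composition and the derivative formulas by an envelope (Danskin-type) argument. The map $(\hx,s,\mu)\mapsto x=\bx+\B\hx$ is affine. By Theorem~\ref{thm:differentiability}, $z_\rho(x,s;\mu)$ is $C^\infty$ in $(x,s,\mu)$ and takes values in $\interior(\K)$, where $\phi$ is $C^\infty$. The explicit expression
\[
\eta_\rho=\langle c,x\rangle+\mu\phi(z_\rho)+\langle s,z_\rho-x\rangle+\tfrac{\rho}{2}\|z_\rho-x\|^2
\]
is therefore a composition of smooth maps, so $\eta_\rho$ is smooth on $\hE\times\E\times\R_{++}$.

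For the gradients, we differentiate this expression with respect to $x$ and $s$ (as a function of $x$) and apply the chain rule. The terms involving $D z_\rho$ combine into
\[
\bigl(\mu\nabla\phi(z_\rho)+s+\rho(z_\rho-x)\bigr)^* D z_\rho,
\]
which vanishes by the optimality condition \eqref{z-equation}. This is the envelope theorem for the inner minimization in \eqref{def:eta}. The remaining partial derivatives are:
\begin{itemize}
\item in $x$: $c-s-\rho(z_\rho-x)=c-y_\rho$, by definition \eqref{y-equation};
\item in $s$: $z_\rho-x$.
\end{itemize}
Since $x=\bx+\B\hx$, the chain rule multiplies the $x$-gradient by $\B^*$, giving $\nabla_{\hx}\eta_\rho=\B^*(c-y_\rho)$ and $\nabla_s\eta_\rho=z_\rho-x$.

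For the second derivatives, we differentiate these gradients using the formulas of Theorem~\ref{thm:differentiability}:
\begin{itemize}
\item $D^2_{\hx\hx}\eta_\rho=-\B^*D_xy_\rho\,\B=\rho\B^*\H^{-1}\W\B$;
\item $D^2_{\hx s}\eta_\rho$, viewed as the operator $h_s\mapsto D_s(\nabla_{\hx}\eta_\rho)h_s$, equals $-\B^*D_sy_\rho=-\B^*\H^{-1}\W$;
\item $D^2_{s\hx}\eta_\rho=(D_xz_\rho-\I_\E)\B=(\H^{-1}-\I_\E)\B$;
\item $D^2_{ss}\eta_\rho=D_sz_\rho=-\rho^{-1}\H^{-1}$.
\end{itemize}

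The only nonroutine step is matching $D^2_{s\hx}$ with the stated form. Since $\H=\I_\E+\W$, we have $\H^{-1}-\I_\E=\H^{-1}(\I_\E-\H)=-\H^{-1}\W$, which gives $D^2_{s\hx}\eta_\rho=-\H^{-1}\W\B$. This should also be consistent with symmetry of mixed partials, i.e.\ $(D^2_{\hx s})^*=D^2_{s\hx}$. That requires $(\H^{-1}\W)^*=\H^{-1}\W$, which holds because $\W$ and $\H=\I_\E+\W$ are self-adjoint and commute (both are polynomials in $D^2\phi(z_\rho)$). We will record this commutation as the key observation, since it is what reconciles the two cross-derivative expressions.
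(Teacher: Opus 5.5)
Your proposal is correct and follows essentially the same route as the paper. Like the paper, you apply the chain rule, use the optimality condition \eqref{z-equation} to cancel the $D z_\rho$ terms in the gradients, and then differentiate those gradients with the formulas of Theorem~\ref{thm:differentiability}. Your explicit computation of all four second-order blocks, including the identity $\H^{-1}-\I_{\E}=-\H^{-1}\W$ and the commutation of $\W$ and $\H$, fills in the ``analogous arguments'' that the paper leaves implicit after treating $D^2_{\hx\hx}\eta_\rho$.
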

\begin{proof}
	The smoothness of $\eta_\rho$ follows immediately from Theorem~\ref{thm:differentiability}. By differentiating \eqref{def:eta} with respect to $\hx$ and using the chain rule, we obtain
	\begin{equation*}
		\begin{aligned}
			 \nabla_{\hx} \et 
            &= \B^* ( \nabla_{x} \et )\\
            &=  \B^* \big( c+\mu D_x z_\rho(x,s;\mu)\nabla \phi(z_\rho(x,s;\mu)) \\
            & \qquad + (D_x z_\rho(x,s;\mu)-\I_{\E})\left( s + \rho(z_\rho(x,s;\mu)-x) \right) \big) .
		\end{aligned}
	\end{equation*}
    By \eqref{z-equation} and \eqref{y-equation}, we have
    	\begin{equation}\label{equation 5}
		\begin{aligned}
			 \nabla_{\hx} \et 
			&=  \B^* \big( D_x z_\rho(x,s;\mu)(\mu\nabla \phi(z_\rho(x,s;\mu))+ y_\rho(x,s;\mu)) \\
			& \qquad +c-y_\rho(x,s;\mu) \big)\\
			&= \B^* ( c-y_\rho(x,s;\mu) ).
		\end{aligned}
	\end{equation}
	  Further differentiating \eqref{equation 5} with respect to $\hx$ yields
 \begin{equation*}
 	D^2_{\hx \hx} \et=\rho \B^* \H^{-1}\W \B.
 \end{equation*}
The remaining identities follow by analogous arguments.
\end{proof}

Corollary~\ref{coro:differentiability-and-derivatives-of-eta} implies that $\et$ is convex in $\hx$ and concave in $s$. The following theorem further shows that $\et$ is a nondegenerate 
$\mu$-self-concordant convex--concave function.

\begin{theorem}\label{thm:self-concordant-of-eta}
	For any $\mu>0$ and $\rho > 0$,  the reduced BAL function $\eta_\rho(\cdot,\cdot;\mu)$ is a nondegenerate $\mu$-self-concordant convex-concave function on $\hE\times \E$. Furthermore, $\eta_\rho(\cdot,s;\mu)$ is nondegenerate $\mu$-self-concordant on $\hE$ for every $s\in \E$, and $-\eta_\rho(\hx,\cdot;\mu)$ is nondegenerate $\mu$-self-concordant on $\E$ for every $\hx \in \hE$. 
\end{theorem}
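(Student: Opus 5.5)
The plan is to verify Definition~\ref{def:sccc function} with $\alpha=\mu$ by computing the third differential of $\et$ explicitly through the implicit equations for $z_\rho,y_\rho$. The goal is to reduce it to a third differential of the barrier $\phi$ at $z_\rho$, and then to use the standard self-concordance of $\phi$. Convexity in $\hx$ and concavity in $s$ are already read off from Corollary~\ref{coro:differentiability-and-derivatives-of-eta}. Nondegeneracy follows from the formulas there: $D^2_{\hx\hx}\et=\rho\B^*\H^{-1}\W\B\succ0$ because $\B$ is injective and $\H^{-1}\W\succ 0$, and $-D^2_{ss}\et=\rho^{-1}\H^{-1}\succ0$. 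Here $\H^{-1}\W$ and $\H^{-1}$ are commuting functions of $\W$, hence symmetric positive definite. The final assertion about $\eta_\rho(\cdot,s;\mu)$ and $-\eta_\rho(\hx,\cdot;\mu)$ will follow from Proposition~\ref{proposition 2}(i) once the main claim is proved.

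\textbf{Third differential.} Fix $h=(h_{\hx},h_s)$, put $d:=\B h_{\hx}$, and differentiate along the line $t\mapsto w+th$. From Corollary~\ref{coro:differentiability-and-derivatives-of-eta}, $D\et[h]=\langle c-y_\rho,d\rangle+\langle z_\rho-x,h_s\rangle$. Differentiating twice more, and noting that $x$ is affine in $t$, gives
\[
D^3\et[h,h,h]=\langle D^2z_\rho[h,h],\,h_s-\rho d\rangle ,
\]
where I use $D^2y_\rho=\rho D^2z_\rho$, which comes from $y_\rho=s+\rho(z_\rho-x)$. Next, differentiate $\mu\nabla\phi(z_\rho)+y_\rho=0$ twice, and write $u:=Dz_\rho[h]=\H^{-1}(d-\rho^{-1}h_s)$ (Theorem~\ref{thm:differentiability}). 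This yields $\rho\H\, D^2z_\rho[h,h]=-\mu D^3\phi(z_\rho)[u,u,\cdot]$. Since $h_s-\rho d=-\rho\H u$ and $\H$ is self-adjoint, substitution collapses everything to
\[
D^3\et[h,h,h]=\mu\, D^3\phi(z_\rho)[u,u,u].
\]
Standard self-concordance of $\phi$ then gives $|D^3\et[h,h,h]|\le 2\mu\,(D^2\phi(z_\rho)[u,u])^{3/2}$.

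\textbf{Comparison inequality (main obstacle).} It remains to show that $\mu D^2\phi(z_\rho)[u,u]\le S_{\eta}(w)[h,h]$. Given this, $|D^3\et|\le 2\mu(S/\mu)^{3/2}=2\mu^{-1/2}S^{3/2}$, which is exactly $\mu$-self-concordance. Set $a=d$ and $b=\rho^{-1}h_s$. Then $\mu D^2\phi[u,u]=\rho\langle u,\W u\rangle=\rho\langle a-b,\W\H^{-2}(a-b)\rangle$, while $S=\rho\langle a,\H^{-1}\W a\rangle+\rho\langle b,\H^{-1}b\rangle$. Because all the operators involved are functions of the single self-adjoint $\W\succ0$, I would diagonalize $\W$ and check the inequality eigenvalue by eigenvalue. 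For an eigenvalue $\omega>0$, the weighted Cauchy inequality $(p-q)^2\le(1+\omega)p^2+(1+\omega^{-1})q^2$, multiplied by $\omega/(1+\omega)^2$, gives $\frac{\omega}{(1+\omega)^2}(p-q)^2\le\frac{\omega}{1+\omega}p^2+\frac{1}{1+\omega}q^2$, which is precisely the required componentwise bound. The delicate points are the bookkeeping in the third-derivative reduction, in particular getting the signs right so that the cross terms collapse, and choosing the Cauchy weight $t=\omega$. I expect the argument to go through as described.
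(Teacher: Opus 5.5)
Your proposal is correct and follows essentially the paper's proof: both reduce $D^3\eta_\rho[h,h,h]$ to $\mu D^3\phi(z_\rho)[u,u,u]$ with $u=\H^{-1}(\B h_{\hx}-\rho^{-1}h_s)$ (the paper's $\h_x-\rho^{-1}\h_s$), bound $\mu D^2\phi(z_\rho)[u,u]\le S_{\eta_\rho}(w;\mu)[h,h]$, and then invoke the standard self-concordance of $\phi$ together with Proposition~\ref{proposition 2}. The differences are only in bookkeeping: you obtain the third derivative by differentiating the implicit equation $\mu\nabla\phi(z_\rho)+y_\rho=0$ twice rather than differentiating $\H^{-1}$, and you check the comparison inequality spectrally via a weighted Cauchy inequality, whereas the paper completes the square and leaves the nonnegative remainder $\rho^{-1}\|\mu D^2\phi(z_\rho)\h_x+\h_s\|^2$, which is exactly, up to the common factor $\rho$, the sum of your eigenwise slacks $(\omega p+q)^2/(1+\omega)^2$.
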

\begin{proof}
	The smoothness of $\eta_\rho$ follows from Corollary~\ref{coro:differentiability-and-derivatives-of-eta}. For any $h=(h_{\hx},h_s)\in \hE\times\E$,
    \begin{equation*}
       S_{\eta_\rho} (w;\mu)[h,h] = \rho \langle h_{\hx}, \B^* \H^{-1}\W \B h_{\hx}\rangle + \rho^{-1}\langle h_s, \H^{-1} h_s \rangle.
    \end{equation*}
    Since $\B$ is injective and $\H^{-1}\W \succ 0$, the operator $\B^* \H^{-1}\W\B$ is positive definite on $\hE$. Combined with $\H^{-1}\succ 0$, this shows that $S_{\eta_\rho} (w;\mu)$ is positive definite, so $\et$ is nondegenerate. By Definition~\ref{def:sccc function}, it suffices to prove that 
    \begin{equation*}
        | D^3 \et [h,h,h] | \le \dfrac{2}{\sqrt{\mu }} (  S_{\eta_\rho} (w;\mu)[h,h] )^{3/2}.
    \end{equation*}
    Let $\h = (\h_x,\h_s) = (\H^{-1}\B h_{\hx},\H^{-1} h_s) \in \E \times \E$. By Corollary~\ref{coro:differentiability-and-derivatives-of-eta} and the formula $\H = \I_{\E} + \W = \I_{\E} + \frac{\mu}{\rho} D^2 \phi(z_\rho)$, we have
    \begin{equation}\label{S_eta inequality}
    \begin{aligned}
         & S_{\eta_\rho} (w;\mu)[h,h]\\
          =&\ \rho \langle \h_x, \H\W \h_x\rangle + \rho^{-1} \langle \h_s, \H \h_s \rangle\\
        =&\  \mu  D^2 \phi(z_\rho) [\h_x,\h_x] + \frac{\mu^2}{\rho} \Vert  D^2 \phi(z_\rho) \h_x \Vert^2 + \rho^{-1} \Vert \h_s \Vert^2 + \frac{\mu}{\rho^2} D^2 \phi (z_\rho) [\h_s,\h_s]   \\
        =&\ \mu  D^2 \phi(z_\rho)[\h_x-\rho^{-1}\h_s, \h_x-\rho^{-1}\h_s]
         +\dfrac{1}{\rho}    \Vert \mu D^2 \phi (z_\rho)\h_x + \h_s \Vert^2 \\
        \ge &\ \mu  D^2 \phi(z_\rho)[\h_x-\rho^{-1}\h_s, \h_x-\rho^{-1}\h_s],
    \end{aligned}
    \end{equation}
    and 
    \begin{equation*}
    \begin{aligned}
         D^2 \et [h,h]  & = \rho \langle \B h_{\hx}, (\I_{\E}-\H^{-1}) \B h_{\hx} \rangle - \rho^{-1} \langle h_s, \H^{-1} h_s \rangle\\
         & \qquad \qquad + 2 \langle \B h_{\hx}, (\H^{-1}-\I_{\E})h_s\rangle.\\
    \end{aligned}
    \end{equation*}
    A direct calculation gives 
        \begin{subequations}\label{H-derivative}
            \begin{align}
              & D_{\hx} \H^{-1} [h_{\hx},\cdot,\cdot] =  - \frac{\mu}{\rho}  D^3 \phi (z_\rho)[\h_x,\H^{-1} \cdot,\H^{-1} \cdot],\\
              & D_s \H^{-1} [h_s,\cdot,\cdot] =   \frac{\mu}{\rho^2}  D^3 \phi (z_\rho)[\h_s,\H^{-1}\cdot,\H^{-1} \cdot].
            \end{align}
        \end{subequations}
 Thus, we have 
 \begin{equation*}
     \begin{aligned}
         & D^3 \et [h,h,h]\\  = &\  -\rho D_{\hx}  \H^{-1}[h_{\hx}, \B h_{\hx}, \B h_{\hx}]-\rho D_s \H^{-1}[h_s, \B h_{\hx}, \B h_{\hx}]\\
         &\  \qquad - \rho^{-1}D_{\hx} \H^{-1}[h_{\hx},h_s,h_s]-\rho^{-1} D_s \H^{-1}[h_s,h_s,h_s]\\
         &\ \qquad  +2D_{\hx} \H^{-1}[h_{\hx},\B h_{\hx},h_s]+2 D_s \H^{-1}[h_s,\B h_{\hx},h_s]\\
        = &\   \mu D^3 \phi(z_\rho)[\h_x-\rho^{-1}\h_s,{\h_x}-\rho^{-1}\h_s,\h_x-\rho^{-1}\h_s].
     \end{aligned}
 \end{equation*}
 Taking absolute values and using the standard self-concordance of $\phi$, we obtain
  \begin{equation*}
     \begin{aligned}
     \left|D^3 \et [h,h,h]\right|=&\ \mu\left| D^3 \phi(z_\rho)[\h_x-\rho^{-1}\h_s,{\h_x}-\rho^{-1}\h_s,\h_x-\rho^{-1}\h_s]\right|\\
     \le &\ {2\mu} \left\{ D^2 \phi(z_\rho)[\h_x-\rho^{-1}\h_s, \h_x-\rho^{-1}\h_s]\right\}^{3/2}\\
        \le &\ \dfrac{2}{\sqrt{\mu}} \left( 
        S_{\eta_\rho} (w;\mu)[h,h] \right)^{3/2},
     \end{aligned}
 \end{equation*}
    where the second inequality follows from \eqref{S_eta inequality}. Hence, $\eta_\rho (\cdot,\cdot;\mu)$ is a nondegenerate $\mu$-self-concordant convex-concave function on $\hE\times \E$. The remaining statements follow immediately from Proposition~\ref{proposition 2}.
\end{proof}

\subsection{Equivalent characterization of the parameterized smooth system}

This subsection establishes the key equivalence between the parameterized smooth system~\eqref{SNM-nonlinear-equation} and
the first-order optimality conditions of the minimax problem~\eqref{minimax-eta}. 

We begin by recalling the barrier subproblem arising in   IPMs:
\begin{equation}\label{barrier-problem-without-z}
    \min \left\{\langle {c}, {x}\rangle + \mu \phi(x) \,|\, \A x = b,\, x\in  \interior(\K)\right\}.
\end{equation}
The following lemma establishes the connection between \eqref{SNM-nonlinear-equation} and \eqref{barrier-problem-without-z}.
\begin{lemma}\label{lemma:SNM-equivalent-smooth-KKT}
    For any $\mu>0$ and $\rho > 0$, the triple $(x(\mu),s(\mu),\lambda(\mu))$ solves the parameterized smooth system~\eqref{SNM-nonlinear-equation} if and only if it is the KKT triple of the barrier subproblem~\eqref{barrier-problem-without-z}.
\end{lemma}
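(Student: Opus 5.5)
The plan is to reduce both systems to the same set of equations by using Lemma~\ref{Lemma:z-x=0} together with the stationarity conditions of the barrier subproblem. Both systems share the linear equations $\A x=b$ and $\A^*\lambda+s=c$, so the proof comes down to showing that, under these equations, the smoothing condition $\Phi_\rho(x,s;\mu)=0$ encodes exactly the remaining KKT requirements of \eqref{barrier-problem-without-z}.

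First I will write down the KKT conditions of \eqref{barrier-problem-without-z}. Since $\phi$ is a barrier on $\interior(\K)$, the only constraints are the affine ones. With Lagrange multiplier $\lambda\in\R^m$, the conditions are
\begin{equation*}
\A x=b,\quad x\in\interior(\K),\quad c+\mu\nabla\phi(x)-\A^*\lambda=0.
\end{equation*}
Setting $s:=-\mu\nabla\phi(x)=\mu x^{-1}$ by \eqref{eq:derivatives of phi}, the KKT triple $(x,s,\lambda)$ is characterized by
\begin{equation*}
\A x=b,\quad \A^*\lambda+s=c,\quad x\in\interior(\K),\quad s=\mu x^{-1}.
\end{equation*}
The last two conditions are equivalent to $x,s\in\interior(\K)$ and $x\circ s=\mu e$. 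This equivalence holds because $x^{-1}\in\interior(\K)$ and $x\circ x^{-1}=e$. For the converse direction I will use the fact that in a Euclidean Jordan algebra, $x\circ s=\mu e$ with $x\in\interior(\K)$ forces $s=\mu x^{-1}$, since $L(x)$ is invertible for $x\in\interior(\K)$. If I want to avoid that algebraic fact, I can instead argue through the minimizer characterization, as sketched at the end.

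Second, $\Phi_\rho(x,s;\mu)=0$ holds iff $z_\rho(x,s;\mu)=x$ by \eqref{def:Phi}, and by Lemma~\ref{Lemma:z-x=0} this holds iff $x,s\in\interior(\K)$ and $x\circ s=\mu e$. Combining this with the first step, \eqref{SNM-nonlinear-equation} and the KKT system coincide with the same $\lambda$, which proves both directions at once.

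The main obstacle is the step $x\circ s=\mu e\Rightarrow s=\mu x^{-1}$, equivalently identifying $-\mu\nabla\phi(x)$ with $s$. The cleaner route is to reuse the proof of Lemma~\ref{Lemma:z-x=0}, which shows that (i) implies $\mu\nabla\phi(x)+s=0$ directly via \eqref{z-equation}. Conversely, if $s=-\mu\nabla\phi(x)$, then \eqref{z-equation} is satisfied by $z=x$, and uniqueness of the minimizer of \eqref{proximal-z} gives $z_\rho=x$. This gives $\Phi_\rho=0\iff s=-\mu\nabla\phi(x)$ with $x\in\interior(\K)$, and it sidesteps the Jordan-algebraic fact entirely. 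I will present this version.
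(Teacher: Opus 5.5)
Your proof is correct, but it connects to the barrier problem at a different point than the paper does.

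\textbf{The paper's route.} It applies Lemma~\ref{Lemma:z-x=0} to convert $\Phi_\rho(x,s;\mu)=0$ into the central-path conditions $x,s\in\interior(\K)$, $x\circ s=\mu e$. It then asserts that these conditions, together with $\A x=b$ and $\A^*\lambda+s=c$, are the KKT triple of \eqref{barrier-problem-without-z}. Two steps are left implicit there:
\begin{itemize}
\item the passage from $x\circ s=\mu e$ to the Lagrangian stationarity $c+\mu\nabla\phi(x)-\A^*\lambda=0$, i.e.\ $s=\mu x^{-1}$, which needs invertibility of $y\mapsto x\circ y$ for $x\in\interior(\K)$;
\item the implication (iii)$\Rightarrow$(i) of Lemma~\ref{Lemma:z-x=0}, which the paper obtains from the closed-form square-root expression for $z_\rho$.
\end{itemize}

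\textbf{Your route.} You write the KKT conditions of the barrier problem explicitly and prove $\Phi_\rho(x,s;\mu)=0\iff x\in\interior(\K),\ s=-\mu\nabla\phi(x)$ directly. One direction follows by evaluating \eqref{z-equation} at $z_\rho=x$, as in the paper's proof of Lemma~\ref{Lemma:z-x=0}. For the other, you note that $z=x$ satisfies the first-order condition of the strictly convex problem \eqref{proximal-z}, so it is the unique minimizer.

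\textbf{Comparison.} Your version is more self-contained and matches the literal statement without any Jordan-algebraic identities or square-root computations. The paper's route has the advantage of producing the complementarity form $x\circ s=\mu e$. The paper reuses that form in Theorem~\ref{thm:equivalent-barrier-eta} and when identifying the central path with that of IPMs. Your first-paragraph remark that $s=\mu x^{-1}\iff x\circ s=\mu e$ on $\interior(\K)$ is exactly the bridge between the two formulations.
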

\begin{proof}
    By definition, the triple $(x(\mu),s(\mu),\lambda(\mu))$ solves \eqref{SNM-nonlinear-equation} if and only if it satisfies
    \begin{equation*}
\A x(\mu)-b=0,\;
\A^*\lambda(\mu) + s(\mu) - c =0,\;
\Phi_\rho (x(\mu),s(\mu);\mu)= 0.
\end{equation*}
Since  $\Phi_\rho(x(\mu),s(\mu);\mu) = 2(x(\mu)-z_\rho(x(\mu),s(\mu);\mu))$, it follows from Lemma~\ref{Lemma:z-x=0} that 
    \begin{equation*}
    \begin{aligned}
         \A x(\mu)  = b,\, \A^* \lambda(\mu) + s(\mu) - c  = 0,\, x(\mu),\,s(\mu) \in \interior(\K),\,
         x(\mu) \circ s(\mu) & = \mu e.
    \end{aligned}
\end{equation*}
Consequently, $(x(\mu),s(\mu),\lambda(\mu))$ is the KKT triple associated with the barrier subproblem~\eqref{barrier-problem-without-z}. The converse implication follows by reversing the above arguments, and the proof is complete.
\end{proof}

Based on Lemma~\ref{lemma:SNM-equivalent-smooth-KKT}, we now establish the equivalence between the parameterized smooth system~\eqref{SNM-nonlinear-equation} and the first-order optimality conditions of the minimax problem~\eqref{minimax-eta}.

\begin{theorem}\label{thm:equivalent-barrier-eta}
For any $\mu>0$ and $\rho > 0$, suppose that $(x(\mu),s(\mu),\lambda(\mu))$ solves the parameterized smooth system~\eqref{SNM-nonlinear-equation}. Then the pair $(\hx(\mu),s(\mu)):= ( \B^* (x(\mu) - \bar{x}), s(\mu))$ is a saddle point of the minimax problem \eqref{minimax-eta}.
Conversely, if $(\hx(\mu),s(\mu))$ is a saddle point of \eqref{minimax-eta}, then 
$$
(x(\mu),s(\mu),\lambda(\mu)):= (\bar{x}+ \B \hx(\mu), s(\mu), (\A \A^*)^{-1}\A (c-s(\mu)))
$$ solves the parameterized smooth system~\eqref{SNM-nonlinear-equation}.
\end{theorem}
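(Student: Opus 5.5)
Since $\eta_\rho(\cdot,\cdot;\mu)$ is convex in $\hx$ and concave in $s$ (Corollary~\ref{coro:differentiability-and-derivatives-of-eta}) and smooth on the whole space $\hE\times\E$, a pair $(\hx,s)$ is a saddle point of \eqref{minimax-eta} if and only if it satisfies the first-order conditions
\begin{equation*}
\nabla_{\hx}\eta_\rho(\hx,s;\mu)=\B^*(c-y_\rho(x,s;\mu))=0,\qquad \nabla_s\eta_\rho(\hx,s;\mu)=z_\rho(x,s;\mu)-x=0,
\end{equation*}
with $x=\bx+\B\hx$. Sufficiency holds because $\eta_\rho(\cdot,s;\mu)$ is convex and $\eta_\rho(\hx,\cdot;\mu)$ is concave, so stationarity in each block gives global minimality or maximality in that block; necessity holds because a saddle point is an unconstrained minimizer in $\hx$ and maximizer in $s$. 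The proof therefore reduces to matching these two equations with \eqref{SNM-nonlinear-equation}, using Lemma~\ref{Lemma:z-x=0}.

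\emph{Forward direction.} Let $(x(\mu),s(\mu),\lambda(\mu))$ solve \eqref{SNM-nonlinear-equation}. Since $\A x(\mu)=b$, the difference $x(\mu)-\bx$ lies in $\ker\A$. The range of $\B$ equals $\ker\A$, because $\B$ is injective, $\A\B=0$, and the dimensions agree. Hence $x(\mu)-\bx=\B\hx$ for some $\hx$, and applying $\B^*$ with $\B^*\B=\I_{\hE}$ gives $\hx=\B^*(x(\mu)-\bx)=\hx(\mu)$. Thus $x(\mu)=\bx+\B\hx(\mu)$.

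Next, $\Phi_\rho=0$ gives $z_\rho=x(\mu)$, so $\nabla_s\eta_\rho=0$. By Lemma~\ref{Lemma:z-x=0}, we also have $y_\rho=s(\mu)$. Then
\begin{equation*}
\nabla_{\hx}\eta_\rho=\B^*(c-s(\mu))=\B^*\A^*\lambda(\mu)=(\A\B)^*\lambda(\mu)=0.
\end{equation*}
This shows that $(\hx(\mu),s(\mu))$ is a saddle point.

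\emph{Converse direction.} Let $(\hx(\mu),s(\mu))$ be a saddle point and set $x(\mu)=\bx+\B\hx(\mu)$, so that $\A x(\mu)=b$ automatically. Stationarity in $s$ gives $z_\rho=x(\mu)$, that is, $\Phi_\rho(x(\mu),s(\mu);\mu)=0$. Lemma~\ref{Lemma:z-x=0} then gives $y_\rho=s(\mu)$, and stationarity in $\hx$ becomes $\B^*(c-s(\mu))=0$.

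The main step is recovering the dual equation. Because $\B^*(c-s(\mu))=0$, the vector $c-s(\mu)$ lies in $(\operatorname{range}\B)^\perp=(\ker\A)^\perp=\operatorname{range}\A^*$. So $c-s(\mu)=\A^*\lambda$ for some $\lambda$. Applying $\A$ gives $\A\A^*\lambda=\A(c-s(\mu))$. Since $\A$ is surjective, $\A\A^*$ is invertible, so $\lambda=(\A\A^*)^{-1}\A(c-s(\mu))=\lambda(\mu)$. Therefore $\A^*\lambda(\mu)+s(\mu)=c$, and all three equations of \eqref{SNM-nonlinear-equation} hold.

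The only delicate points are the identity $\operatorname{range}\B=\ker\A$ and its orthogonal complement, both of which follow from the dimension count, together with the justification that first-order conditions characterize saddle points. For the latter it suffices to cite convexity–concavity on the full space; the existence of saddle points is not needed for this equivalence.
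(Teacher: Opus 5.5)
Your proof is correct and follows essentially the same route as the paper. Both reduce saddle points to the first-order conditions $\nabla_{\hat{x}}\eta_\rho=\mathcal{B}^*(c-y_\rho)=0$ and $\nabla_s\eta_\rho=z_\rho-x=0$ via convexity--concavity, then match them with \eqref{SNM-nonlinear-equation} using Lemma~\ref{Lemma:z-x=0} and $(\ker\mathcal{A})^\perp=\operatorname{range}\mathcal{A}^*$. Your version is slightly more direct, going straight from $\Phi_\rho=0$ to $z_\rho=x$ instead of through Lemma~\ref{lemma:SNM-equivalent-smooth-KKT}, and it also makes explicit the step $x(\mu)=\bar{x}+\mathcal{B}\hat{x}(\mu)$ via $\operatorname{range}\mathcal{B}=\ker\mathcal{A}$, which the paper leaves implicit.
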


\begin{proof}
Suppose that the triple $(x(\mu),s(\mu),\lambda(\mu))$ solves \eqref{SNM-nonlinear-equation}. By Lemma~\ref{lemma:SNM-equivalent-smooth-KKT}, we have 
\begin{equation}\label{thm:equivalent-barrier-eta-equation-1}
    \begin{aligned}
         \A x(\mu)  = b,\, \A^* \lambda(\mu) + s(\mu) - c  = 0,\, x(\mu),\,s(\mu) \in \interior(\K),\,
         x(\mu) \circ s(\mu) & = \mu e.
    \end{aligned}
\end{equation}
Together with Lemma~\ref{Lemma:z-x=0}, this yields
\begin{equation*}    
z_\rho(x(\mu),s(\mu);\mu) = x(\mu).
\end{equation*}
Thus, it follows from Corollary~\ref{coro:differentiability-and-derivatives-of-eta} that
\begin{equation*}
\nabla_s \eta_\rho (\hx(\mu),s(\mu);\mu) = z_\rho(x(\mu),s(\mu);\mu) - x(\mu) = 0.
\end{equation*}
Furthermore, 
\begin{equation*}
    \begin{aligned}
         \nabla_{\hx} \eta_\rho (\hx (\mu), s(\mu);\mu) & = \B^*(c - y_\rho(x(\mu),s(\mu);\mu))\\
         & = \B^*(c - s(\mu) - \rho (z_\rho(x(\mu),s(\mu);\mu)- x(\mu)) ).
    \end{aligned}
\end{equation*}
By the identity $z_\rho(x(\mu),s(\mu);\mu) =  x(\mu)$ and the second equation in \eqref{thm:equivalent-barrier-eta-equation-1}, we have 
\begin{equation*}
    \nabla_{\hx} \eta_\rho (\hx (\mu), s(\mu);\mu) = \B^*(c - s(\mu)) = \B^* \A^* \lambda(\mu)=0. 
\end{equation*}
Therefore, $(\hx(\mu),s(\mu)):=( \B^* (x(\mu) - \bar{x}), s(\mu))$ satisfies the first-order optimality conditions of the minimax problem~\eqref{minimax-eta}. Since $\eta_\rho$ is convex-concave, this pair is a saddle point of \eqref{minimax-eta}.

Conversely, suppose that $(\hx(\mu),s(\mu))$ is a saddle point of
\eqref{minimax-eta}. Then it satisfies 
\begin{equation}\label{thm:equivalent-barrier-eta-equation-2}
    \begin{aligned}
         z_\rho(x(\mu),s(\mu);\mu)  = x(\mu),\,
          \B^*(c - y_\rho(x(\mu),s(\mu);\mu)) =0,
    \end{aligned}
\end{equation}
where $x(\mu) = \bar{x} + \B \hx(\mu)$. By Lemma~\ref{Lemma:z-x=0}, we have 
$$
    x(\mu),\, s(\mu) \in \interior(\K),\; x(\mu) \circ s(\mu) = \mu e,
$$
and $\A x(\mu)-b = \A \bar{x} - b + \A \B \hx(\mu) = 0$. It remains to show that
\begin{equation*}
    \A^* \lambda(\mu) + s(\mu) - c = 0.
\end{equation*}
Note that the second equation of \eqref{thm:equivalent-barrier-eta-equation-2} implies 
$$
c - y_\rho(x(\mu),s(\mu);\mu) = c - s(\mu) \in ({\rm ker}\, \A)^\perp. 
$$
Since $\A$ is surjective and $\A^* (\A \A^* )^{-1} \A$ is the orthogonal projector onto $({\rm ker}\, \A)^\perp$, we obtain
\begin{equation*}
    \begin{aligned}
         \A^* \lambda(\mu) + s(\mu) - c  = 
         (\I_{\E} - \A^* (\A \A^* )^{-1} \A ) ( s(\mu) - c) = 0,
    \end{aligned}
\end{equation*}
which concludes the proof.
\end{proof}

\begin{remark}\label{rmk:existence-of-saddle-point}
 According to \cite[Theorem 4.1]{nesterov1998primal}, the barrier subproblem~\eqref{barrier-problem-without-z} admits a unique primal-dual optimal solution. Together with Theorem~\ref{thm:equivalent-barrier-eta}, this implies that the minimax problem~\eqref{minimax-eta} admits a unique saddle point for any $\mu>0$ and $\rho > 0$.
\end{remark}

The following theorem characterizes the search direction of the SNM.

\begin{theorem}\label{thm:SNM-equivalent-BAL-function}
Let $(x,s,\lambda)$ satisfy the affine constraint $\A x = b$.  For any scalars $\mu>0$ and $\rho > 0$, suppose that $(\dx,\ds,\dlam)$ is the search direction generated by the SNM, satisfying \eqref{SNM-Newton-direction-rho}. Then the pair $(\dhx,\ds) := (\B^* \dx,\ds)$ is the unique Newton direction for the minimax problem~\eqref{minimax-eta}, given by 
\begin{equation}\label{reduced-BAL-Newton-update}
    \begin{pmatrix}
    \rho \B^* \H^{-1} \W \B & - \B^* \H^{-1}\W \\
    -  \H^{-1} \W\B & -\rho^{-1} \H^{-1}
    \end{pmatrix}
    \begin{pmatrix}
    \dhx\\
    \ds
    \end{pmatrix}
    = - 
    \begin{pmatrix}
    \B^* (c-y_\rho)\\
    z_\rho - x
    \end{pmatrix}.
\end{equation}
Conversely, suppose that $(\dhx,\ds)$ is the Newton direction for the minimax problem~\eqref{minimax-eta}. Define 
\begin{equation}\label{def:dlam}
    \dlam:= \left({ \A \A^*}\right)^{-1}\A(-s - \ds +c -\A^* \lambda).
\end{equation}
Then $(\dx,\ds,\dlam) = (\B \dhx, \ds,\dlam)$ is the unique search direction given by the SNM. 
\end{theorem}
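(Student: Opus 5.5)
The plan is to verify directly that the two linear systems are algebraically equivalent under the change of variables $\dx=\B\dhx$, $\dhx=\B^*\dx$. Uniqueness on both sides comes for free. For \eqref{SNM-Newton-direction-rho} it is Proposition~\ref{prop:uniqueness-of-SNM-Newton}. For \eqref{reduced-BAL-Newton-update}, the coefficient matrix is exactly $D^2\eta_\rho(w;\mu)$ by Corollary~\ref{coro:differentiability-and-derivatives-of-eta}, and its right-hand side is $-\nabla\eta_\rho(w;\mu)$ by the same corollary. Nondegeneracy (Theorem~\ref{thm:self-concordant-of-eta}) makes this matrix invertible, since it is block-diagonal-dominant with a positive definite $(1,1)$ block and a negative definite $(2,2)$ block. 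Concretely, the Schur complement $D^2_{\hx\hx}\eta_\rho - D^2_{\hx s}\eta_\rho(D^2_{ss}\eta_\rho)^{-1}D^2_{s\hx}\eta_\rho$ is $\succ 0$.

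\emph{Forward direction.} Since $\A x=b$, the first row of \eqref{SNM-Newton-direction-rho} gives $\A\dx=0$, so $\dx\in\ker\A=\operatorname{range}\B$. Because $\B^*\B=\I_{\hE}$, we get $\dx=\B\dhx$ with $\dhx=\B^*\dx$.
\begin{itemize}
\item The third row reads $\H^{-1}\W\B\dhx+\rho^{-1}\H^{-1}\ds=z_\rho-x$. Multiplied by $-1$, this is exactly the second block row of \eqref{reduced-BAL-Newton-update}.
\item For the first block row, apply $\B^*$ to the second SNM row. Using $\B^*\A^*=(\A\B)^*=0$, this gives $\B^*\ds=-\B^*(\A^*\lambda+s-c)=\B^*(c-s)$.
\item Next, write $c-y_\rho=c-s-\rho(z_\rho-x)$ and substitute $z_\rho-x$ from the third row. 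This yields $\B^*(c-y_\rho)=\B^*\ds-\rho\B^*\H^{-1}\W\B\dhx-\B^*\H^{-1}\ds$.
\item Since $\I_{\E}-\H^{-1}=\H^{-1}\W$ (because $\H=\I_{\E}+\W$), this rearranges to $\rho\B^*\H^{-1}\W\B\dhx-\B^*\H^{-1}\W\ds=-\B^*(c-y_\rho)$, which is the first block row.
\end{itemize}

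\emph{Converse direction.} Let $(\dhx,\ds)$ solve \eqref{reduced-BAL-Newton-update}, and set $\dx=\B\dhx$. Then $\A\dx=0=-(\A x-b)$. The second block row of \eqref{reduced-BAL-Newton-update} gives the third SNM row verbatim. Running the computation above backwards, the first block row yields $\B^*(\ds+s-c+\A^*\lambda)=0$, using again $\B^*\A^*=0$. So $r:=\A^*\lambda+s+\ds-c$ lies in $(\ker\A)^\perp$. Then, exactly as in the proof of Theorem~\ref{thm:equivalent-barrier-eta}, since $\A^*(\A\A^*)^{-1}\A$ is the orthogonal projector onto $(\ker\A)^\perp$, the choice \eqref{def:dlam} gives $\A^*\dlam=-\A^*(\A\A^*)^{-1}\A r=-r$. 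This is precisely the second SNM row, and uniqueness follows from Proposition~\ref{prop:uniqueness-of-SNM-Newton}.

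The main obstacle is bookkeeping in the first block row, where one must correctly use the identity $\I_{\E}-\H^{-1}=\H^{-1}\W$ and the fact that the $\lambda$-dependence disappears after applying $\B^*$. I will also explicitly note that the operators $\H$ and $\W$ appearing in both systems are evaluated at the same $z_\rho(x,s;\mu)$, which is consistent because $x=\bx+\B\hx$.
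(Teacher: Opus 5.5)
Your proposal is correct and follows essentially the same route as the paper. You obtain $\Delta x=\B\Delta\hat x$ from $\A\Delta x=0$ and $\B^*\B=\I_{\hE}$, and you verify the two block rows using $\B^*\A^*=0$ together with the identity $\I_{\E}-\H^{-1}=\H^{-1}\W$, which is the paper's cancellation $\H^{-1}(\I_{\E}+\W)=\I_{\E}$ in another form. In the converse you use the same orthogonal-projector argument to recover $\Delta\lambda$.

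The only difference is cosmetic. You obtain invertibility of the reduced Newton matrix from the generic fact that its Schur complement dominates $D^2_{\hat x\hat x}\eta_\rho\succ 0$ when $D^2_{ss}\eta_\rho\prec 0$. The paper instead computes that Schur complement explicitly as $\rho\B^*\W\B\succ 0$.
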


\begin{proof}
Suppose that $(\dx,\ds,\dlam)$ is the search direction generated by the SNM. 
Let $(\dhx,\ds) = (\B^* \dx, \ds)$. We first show that $ \dx = \B \dhx $. Since $\A \dx = -(\A x -b) = 0$, there exists a unique vector $ \Delta \bar{x} \in \hE$ such that $\dx = \B \Delta \bar{x}$. It follows that 
$$
    \dhx = \B^* \dx = \B^* \B \Delta \bar{x} = \Delta \bar{x}.
$$
Thus, $\dhx = \Delta \bar{x}$ and $ \dx = \B \dhx$. Consequently, 
\begin{equation*}
    \begin{aligned}
         & \rho \B^* \H^{-1} \W \B \dhx - \B^* \H^{-1} \W \ds + \B^* (c-y_\rho)\\  
        =\ &\ \B^* ( \rho \H^{-1} \W \dx - \H^{-1}\W \ds + c -s - \rho (z_\rho-x))\\
        =\ &\ \B^*(c-s - \ds)\\
        =\ &\ \B^* \A^*(\lambda + \dlam)\\
         =\ &\ 0.
    \end{aligned}
\end{equation*}
Here the second and third equalities follow from the third and second equations of \eqref{SNM-Newton-direction-rho}, respectively. The second equation of \eqref{reduced-BAL-Newton-update} follows directly from the third equation of \eqref{SNM-Newton-direction-rho}. Consequently, the linear system~\eqref{reduced-BAL-Newton-update} admits at least one solution. Since the Schur complement of the operator in \eqref{reduced-BAL-Newton-update} relative to $-\rho^{-1} \H^{-1}$ is
$\rho \B^* \W \B \succ 0$, the uniqueness follows directly.

Conversely, suppose that $(\dhx,\ds)$ is the Newton direction for the minimax problem~\eqref{minimax-eta}. Then, 
\begin{equation*}
    \A \dx = \A \B \dhx = 0 = -(\A x -b ),
\end{equation*}
so the first equation of \eqref{SNM-Newton-direction-rho} holds.  Furthermore, the second block row of \eqref{reduced-BAL-Newton-update} gives
\begin{equation*}
-\H^{-1}\W\B\Delta\hx-\rho^{-1}\H^{-1}\Delta s=-(z_\rho-x).
\end{equation*}
Since $\Delta x=\B\Delta\hx$, this is equivalent to
\begin{equation*}
\H^{-1}\W\Delta x+\rho^{-1}\H^{-1}\Delta s=z_\rho-x,
\end{equation*}
which is precisely the third equation of \eqref{SNM-Newton-direction-rho}. It remains to verify the second equation of \eqref{SNM-Newton-direction-rho}. Multiplying the second equation in \eqref{reduced-BAL-Newton-update} on the left by $\rho \B^*$
and adding it to the first equation, we obtain 
\begin{equation*}
    \B^* (c-s-\ds) = 0.
\end{equation*} 
This implies $s + \ds -c \in ({\rm ker}\,\A)^\perp$.  Combined with \eqref{def:dlam}, 
\begin{equation*}
\begin{aligned}
    \A^*(\lambda + \dlam) + s + \ds -c & = (\I_{\E} - \A^* (\A \A^*)^{-1}\A) (s+\ds -c).
\end{aligned}
\end{equation*}
Since $\I_{\E} - \A^* (\A \A^*)^{-1} \A$ is the orthogonal projection onto ${\rm ker}\, \A$, we have 
$$
    \A^*(\lambda + \dlam) + s + \ds -c = 0.
$$
Thus all three equations in \eqref{SNM-Newton-direction-rho} are satisfied. The uniqueness of $(\dx,\ds,\dlam)$ follows from Proposition~\ref{prop:uniqueness-of-SNM-Newton}, and the proof is complete. 
\end{proof}

Theorems~\ref{thm:equivalent-barrier-eta} and~\ref{thm:SNM-equivalent-BAL-function} provide an equivalent characterization of both the parameterized smooth system and the search direction generated by the SNM via the reduced BAL function $\eta_\rho$. This equivalence implies that, in the subsequent algorithmic analysis, it suffices to study the Newton iterations applied to the minimax problem~\eqref{minimax-eta}. This offers a convenient and powerful tool for analyzing the behavior of the SNM. We conclude this section with a summary
of the main characterizations obtained.
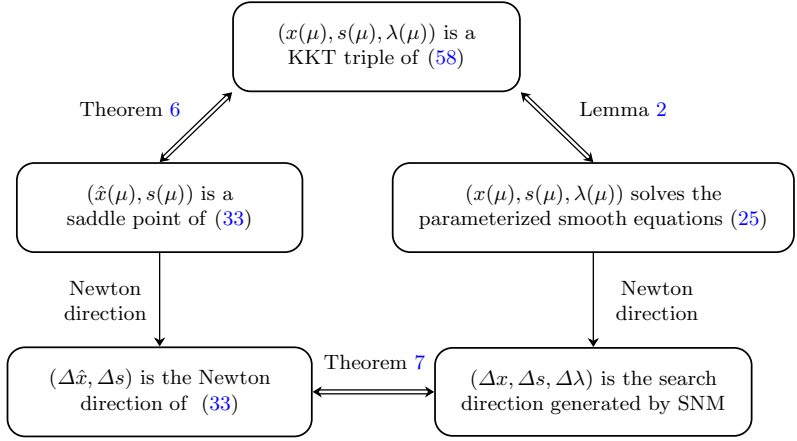
\begin{figure}[H] 
\begin{center}
\resizebox{0.85\linewidth}{!}{%
\begin{tikzpicture}[
    x=1cm,y=1cm,
    font=\normalsize,
    box/.style={
        draw,
        line width=0.8pt,
        rounded corners=8pt,
        minimum height=1.26cm,
        inner xsep=12pt,
        inner ysep=10pt,
        align=center
    },
    darrow/.style={
        double,
        double distance=1.2pt,
        line width=0.6pt,
        -{Stealth[length=4.2pt,width=5.2pt]}
    },
    darrow2/.style={
        double,
        double distance=1.2pt,
        line width=0.6pt,
        {Stealth[length=4.2pt,width=5.2pt]}-{Stealth[length=4.2pt,width=5.2pt]}
    },
    darrow3/.style={
        line width=0.6pt,
        -{Stealth[length=4.2pt,width=5.2pt]}
    }
]

\node[box, minimum width=4.58cm] (TL) at (4.875, 0)
{$(\hx(\mu),s(\mu))$ is a \\ saddle point of \eqref{minimax-eta}};
\node[box, minimum width=3.41cm] (TR) at (11.83, 0)
{$(x(\mu),s(\mu),\lambda(\mu))$ solves the\\ parameterized smooth system~\eqref{SNM-nonlinear-equation}};

\node[box, minimum width=4.60cm] (TM) at ($(TL)!0.5!(TR)+(0,2.60)$)
{$(x(\mu),s(\mu),\lambda(\mu))$ is a \\ KKT triple of \eqref{barrier-problem-without-z}};

\node[box, minimum width=4.88cm] (BL) at (4.875, -2.98)
{$(\dhx,\ds)$ is the Newton\\ direction of ~\eqref{minimax-eta}};
\node[box, minimum width=4.88cm] (BR) at (11.83, -2.98)
{$(\dx,\ds,\dlam)$ is the search \\direction generated by SNM};

\draw[darrow2] (BL.east) -- (BR.west);
\draw[darrow3]  (TL.south) -- (BL.north);
\draw[darrow3]  (TR.south) -- (BR.north);

\draw[darrow2] (TM.south west) -- (TL.north);
\node[font=\normalsize] at ($(TM.south west)!0.55!(TL.north)+(-1.00,0.35)$) {Theorem~\ref{thm:equivalent-barrier-eta}};

\draw[darrow2] (TM.south east) -- (TR.north);
\node[font=\normalsize] at ($(TM.south east)!0.55!(TR.north)+(1.00,0.35)$) {Lemma~\ref{lemma:SNM-equivalent-smooth-KKT}};

\node[align=center] at (4.00,-1.5) {Newton \\ direction};
\node[align=center] at (12.85,-1.5) {Newton\\ direction};
\node at (8.32,-2.5) {Theorem~\ref{thm:SNM-equivalent-BAL-function}};
\end{tikzpicture}%
}
\end{center}
\caption{Summary of equivalence relationships} 
\label{fig:optimality_relation}
\end{figure}

\section{A path-following smoothing Newton method}\label{sec:alg}

This section proposes a path-following smoothing Newton method for symmetric cone programming. The method consists of two phases. In the first phase, an initial point is constructed in a well-defined neighborhood of the central path. The second phase then uses this point to generate iterates that remain in the neighborhood and terminates once the prescribed accuracy is reached.

\subsection{Neighborhood of the central path}
For both practical implementation and theoretical analysis, maintaining iterates within a well-defined neighborhood of the central path is critical. To measure the proximity of a point to the central path and drive the iteration process, we introduce several auxiliary merit functions.

By Theorem~\ref{thm:self-concordant-of-eta}, the function $\et$ is strictly convex in $\hx\in\hE$ and strictly concave in $s\in \E$. Accordingly, we measure its suboptimality by the primal-dual gap function as in \eqref{def:merit-function-for-self-concordant-convex-concave-function}:
\[
\begin{aligned}
\theta_\rho(w;\mu)&=\max_{\tilde{s}}\eta_\rho(\hx,\tilde{s};\mu)-\min_{\tilde{x}}\eta_\rho(\tilde{x},s;\mu)\\
&=\eta_\rho(\hx,{s_\rho(\hx,\mu)};\mu)-\eta_\rho(\hx_\rho (s,\mu),s;\mu),
\end{aligned}
\]
where 
$
s_\rho(\hx,\mu)={{\argmax_{\tilde{s}}}\,\eta_\rho (\hx,\tilde{s};\mu)},\, \hx_\rho (s,\mu)={{\argmin_{\tilde{x}}}\,\eta_\rho(\tilde{x},s;\mu)}.$ 

Let $(\hx(\mu),s(\mu))$ be the saddle point of the minimax problem~\eqref{minimax-eta}, which always exists for any $\mu>0$ and $\rho > 0$ by Remark~\ref{rmk:existence-of-saddle-point}. Then for any $(\hx,s)\in \hE\times \E$, the following inequalities hold:
\begin{equation}\label{eq:max eta>= minimax eta>=min eta}
\begin{aligned}
     \max_{\tilde{s}}\eta_\rho (\hx,\tilde{s};\mu) \ge   \eta_\rho(\hx(\mu),s(\mu);\mu) \ge \min_{\tilde{x}}\eta_\rho (\tilde{x},s;\mu).\\
\end{aligned}
\end{equation}
Let ${\rm val}\, (\mathrm{P}_\mu)$ denote the optimal value of the barrier problem~\eqref{barrier-problem-without-z}. By Theorem~\ref{thm:equivalent-barrier-eta}, 
$$
\eta_\rho(\hx(\mu),s(\mu);\mu) = {\rm val}\, (\mathrm{P}_\mu).
$$
Combining this with \eqref{eq:max eta>= minimax eta>=min eta} yields that, for any $(\hx,s)\in \hE\times \E$,
\begin{equation}\label{error-between-optimal-value-and-eta}
    \left| \et  - {\rm val}\, (\mathrm{P}_\mu) \right| \le \theta_\rho (w;\mu).
\end{equation}
Furthermore, \eqref{eq:max eta>= minimax eta>=min eta} implies that $\theta_\rho (w;\mu)\geq0$, and $\theta_\rho (w;\mu)=0$ holds if and only if $(\hx,s)$ is the saddle point of \eqref{minimax-eta}. This gap therefore provides a certificate of optimality and will be used to measure the proximity to the central path. 

In practice, evaluating the exact primal-dual gap $\theta_\rho (w;\mu)$ is computationally prohibitive, as it requires solving optimization problems. Recall that $x = \bar{x} + \B \hx$. Let $(\dx,\ds,\dlam)$ be the search direction given by \eqref{SNM-Newton-direction-rho}, and let $\Delta w=(\dhx,\ds)$ be the Newton direction for the minimax problem~\eqref{minimax-eta}. For algorithmic purposes, we follow \eqref{def:merit-function-for-self-concordant-convex-concave-function} and introduce easily computable merit functions: 
\begin{subequations}\label{definition delta numerical}
\begin{align}
 	\delta_{\hx,\rho}(w;\mu) &= \sqrt{\dfrac{1}{\mu} \langle \Delta \hx,{ D_{\hx\hx}^2{\et}}\Delta \hx\rangle},\label{definition delta x}\\
 	\delta_{s,\rho}(w;\mu) &= \sqrt{-\dfrac{1}{\mu} \langle \Delta s, D_{ss}^2 \et \Delta s \rangle },\label{definition delta s}\\
	\delta_\rho (w;\mu) &= \sqrt{(\delta_{\hx,\rho}(w;\mu))^2+(\delta_{s,\rho}(w;\mu))^2} = \Vert \Delta w\Vert_{\eta_\rho,w},\label{definition delta}\\
    \xi_{\hx,\rho} (w;\mu)  &= \sqrt{\frac{1}{\mu} \langle \nabla_{\hx} \et, (D^2_{\hx\hx}\et)^{-1} \nabla_{\hx}\et\rangle }, \label{definition xi x}\\
    \xi_{s,\rho} (w;\mu)  &= \sqrt{-\frac{1}{\mu} \langle \nabla_{s} \et, (D^2_{ss}\et)^{-1} \nabla_{s}\et\rangle }, \label{definition xi s}\\
    \xi_\rho (w;\mu) & = \sqrt{  (\xi_{\hx,\rho}(w;\mu))^2 + (\xi_{s,\rho}(w;\mu))^2 } = \| \nabla_{w} \eta_\rho (w;\mu) \|^*_{\eta_\rho,w}.
\end{align}
\end{subequations}
These quantities serve as surrogate measures of the quality of the current iterate. 

\begin{remark}\label{rmk:delta-equivalence}
By Corollary~\ref{coro:differentiability-and-derivatives-of-eta}, we have
$$ 
    \delta_{\hx,\rho}(w;\mu)=\sqrt{\frac{1}{\mu} \langle \Delta x,{ D_{xx}^2 \et}\Delta x\rangle}.
$$
This implies that it is unnecessary to explicitly form $\Delta \hat{x}$ and $\nabla_{\hx\hx}^2 \et$ in practical computations. The quantity $\delta_{\hx,\rho}(w;\mu)$ can be computed directly from $\Delta x$ and $D^2_{x x}\et$.
\end{remark}

If $\xi_\rho (w;\mu) = 0$, then $(\hx,s)$ is the saddle point of the minimax problem \eqref{minimax-eta}. This defines a central path that coincides with the one
generated by IPMs, as shown in Theorem~\ref{thm:equivalent-barrier-eta}.  Specifically,
\begin{equation}
\begin{aligned}
 &\A x = b,\, \A^*\lambda +s = c,\, x\in \interior(\K),\, s\in \interior(\K),\, x \circ s = \mu e\\
 \Longleftrightarrow & \quad
          \A x = b,\, \A^* \lambda + s  = c,\, \xi_\rho(w;\mu)=0.
\end{aligned}
\end{equation}
Motivated by this equivalence, we define the central-path neighborhood for the SNM based on the reduced BAL function $\eta_\rho$ by 
\begin{equation}
      \mathcal{N}(\kappa,\mu,\rho):=
      \left\{(x,s,\lambda)\in\E\times\E\times\mathbb{R}^m\,|\, \A x = b,\, \A^* \lambda + s =c,\, \xi_\rho (w;\mu)\le \kappa \right\},
\end{equation}
where $\kappa = 0.1$ is fixed throughout the algorithm and the complexity analysis.

This neighborhood differs from the standard neighborhoods used in classical interior-point path-following methods~\cite{monteiro1998unified,nesterov1998primal,schmieta2003extension}  or in non-interior path-following methods~\cite{burke2000non,burke1998global,chen2003non,zhao2003globally}.
It is defined via the merit function induced by the
minimax problem, and is tailored to the structure of
the SNM.  The proposed method follows the standard paradigm of path-following methods.  In the first phase, the iterates are driven into $\mathcal{N}(\kappa,\mu^{(0)},\rho)$. In the second phase, the
iterates are maintained in $\mathcal{N}(\kappa,\mu^{(k)},\rho)$ while the smoothing parameter is updated.

\subsection{Two-phase path-following framework}

Before introducing the two-phase framework, we present some additional notation. Let 
$$
v:=(x,s,\lambda), \quad \Delta v:=(\Delta x,\Delta s,\Delta \lambda).
$$
Define $K(\theta_\rho) := \bigl\{w \mid \theta_\rho (w;\mu) < +\infty \bigr\}$.  For a given initial point $w^{(0,0)}:=(\hx^{(0,0)},s^{(0,0)})\in K(\theta_\rho)$, 
let $\bx \in \E$ satisfy $\A \bx =b$, and set 
$$
x^{(0,0)}= \bx + \B\hx^{(0,0)}.
$$
For $t\in (0,1]$, define the perturbed function
\begin{equation}
    \eta_{t,\rho} ( w ;\mu^{(0)} ) : =  \eta_\rho ( w ;\mu^{(0)}) - t \langle \nabla_{w} \eta_\rho (w^{(0,0)};\mu^{(0)}), w \rangle.
\end{equation}
The purpose of this perturbation is to construct a continuation path from the initial point to a point satisfying the neighborhood condition for the fixed parameter $\mu^{(0)}$. 
Since $\eta_\rho$ is a nondegenerate $\mu$-self-concordant convex-concave function, $\eta_{t,\rho}$ inherits the same property for any $t\ge 0$.

The first phase of PFSNM aims to generate a feasible point in the neighborhood $\mathcal{N}(\kappa,\mu^{(0)},\rho)$. The parameter $t$ scales the perturbation term so that the initial point $(x^{(0,0)},s^{(0,0)},\lambda^{(0,0)})$ lies in the central-path neighborhood of the perturbed problem. For this purpose, consider the minimax problem
$$
\min_{\hx \in \hE} \max_{s \in \E}\left\{ \eta_{t,\rho} (w;\mu^{(0)}) \right\}
$$ 
with the first-order optimality conditions 
\begin{equation}\label{alg:eta_t minimax optimality condition}
\begin{aligned}
     & \B^* (c - y_\rho) - t \B^* (c - y_\rho^{(0,0)}) = 0,\\
     & z_\rho - x - t (z_\rho^{(0,0)} - x^{(0,0)}) = 0,
\end{aligned}
\end{equation}
where $y_\rho^{(0,0)} := y_\rho (w^{(0,0)};\mu^{(0)})$ and $z_\rho^{(0,0)} := z_\rho (w^{(0,0)};\mu^{(0)})$.

Applying Newton's method to the nonlinear system \eqref{alg:eta_t minimax optimality condition} leads to the search direction $(\Delta \hx, \Delta s)$ satisfying 
\begin{equation}\label{eq:etat-Newton-direction}
        \begin{pmatrix}
    \rho \B^* \H^{-1} \W \B & - \B^* \H^{-1}\W \\
    -  \H^{-1} \W\B & -\rho^{-1} \H^{-1}
    \end{pmatrix}
    \begin{pmatrix}
    \dhx\\
    \ds
    \end{pmatrix}
    = - 
    \begin{pmatrix}
    \B^* (c-y_\rho)\\
    z_\rho - x
    \end{pmatrix}
    + t 
    \begin{pmatrix}
    \B^* (c-y_\rho^{(0,0)})\\
    z_\rho^{(0,0)} - x^{(0,0)}
    \end{pmatrix}
    .
\end{equation}
In practical computations, the variable $\hx$ is not formed explicitly. Following Theorem~\ref{thm:SNM-equivalent-BAL-function}, we instead
solve the system in $(x,s,\lambda)$:
\begin{equation}\label{eq:SNMt-Newton-direction}
  \begingroup 
    \fontsize{9pt}{12pt}\selectfont
        \begin{pmatrix}
    \A & 0 & 0\\
    0 & \I_{\E} & \A^*\\
    \H^{-1}\W & \rho^{-1} \H^{-1} & 0
    \end{pmatrix}
    \begin{pmatrix}
    \dx\\
    \ds\\
    \dlam
    \end{pmatrix}
    = -
    \begin{pmatrix}
    \A x -b \\
    \A^* \lambda + s -c\\
    x - z_\rho
    \end{pmatrix}
    + t 
    \begin{pmatrix}
    0\\
    \A^* \lambda^{(0,0)} + s^{(0,0)} -c\\
    x^{(0,0)}-z_\rho^{(0,0)}
    \end{pmatrix}.
    \endgroup
\end{equation}
The resulting search direction coincides with the one obtained by solving \eqref{eq:etat-Newton-direction} under the constraint $\A x=b$.
This equivalence is formalized in the following corollary.
\begin{corollary}\label{coro:Newton-direction-t-equivalence}
    Let $(x,s,\lambda)$ satisfy the affine constraint $\A x = b$. For the given point $(x^{(0,0)},s^{(0,0)},\lambda^{(0,0)})$ and any $\mu^{(0)}>0,\,\rho > 0$, suppose that $(\dx,\ds,\dlam)$ solves \eqref{eq:SNMt-Newton-direction}. Then the pair $(\dhx,\ds) := (\B^* \dx,\ds)$ is the unique solution of \eqref{eq:etat-Newton-direction}.
Conversely, if $(\dhx,\ds)$ solves \eqref{eq:etat-Newton-direction}, define 
\begin{equation}
    \dlam:= \left({ \A \A^*}\right)^{-1}\A(-s - \ds +c -\A^* \lambda + t (\A^* \lambda^{(0,0)} + s^{(0,0)} - c)).
\end{equation}
Then $(\dx,\ds,\dlam): = (\B \dhx, \ds,\dlam)$ is the unique solution of \eqref{eq:SNMt-Newton-direction}. 
\end{corollary}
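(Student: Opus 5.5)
The plan is to follow the proof of Theorem~\ref{thm:SNM-equivalent-BAL-function} almost line by line, tracking the extra $t$-dependent right-hand-side terms. The perturbation terms are constants with respect to the unknowns $(\dx,\ds,\dlam)$ and $(\dhx,\ds)$. Hence the coefficient operators in \eqref{eq:SNMt-Newton-direction} and \eqref{eq:etat-Newton-direction} coincide with those in \eqref{SNM-Newton-direction-rho} and \eqref{reduced-BAL-Newton-update}. Consequently uniqueness in both directions follows from the arguments already given: Proposition~\ref{prop:uniqueness-of-SNM-Newton} for the full system, and the Schur complement $\rho\B^*\W\B\succ 0$ for the reduced system. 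What remains is to check existence and the correspondence of solutions.

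\emph{Forward direction.} Assume $(\dx,\ds,\dlam)$ solves \eqref{eq:SNMt-Newton-direction}. Since $\A x=b$, the first row gives $\A\dx=0$. So $\dx=\B\Delta\bar x$ for some $\Delta\bar x\in\hE$, and $\B^*\B=\I_{\hE}$ yields $\dhx:=\B^*\dx=\Delta\bar x$, that is, $\dx=\B\dhx$.

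The second block row of \eqref{eq:etat-Newton-direction} is the third row of \eqref{eq:SNMt-Newton-direction} multiplied by $-1$, with $\dx=\B\dhx$ substituted. Indeed, both read $\H^{-1}\W\dx+\rho^{-1}\H^{-1}\ds=z_\rho-x-t(z_\rho^{(0,0)}-x^{(0,0)})$.

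For the first block row, I will expand $\rho\B^*\H^{-1}\W\B\dhx-\B^*\H^{-1}\W\ds+\B^*(c-y_\rho)$, using $y_\rho=s+\rho(z_\rho-x)$. The third row of \eqref{eq:SNMt-Newton-direction} gives
\[
\rho\H^{-1}\W\dx=\rho(z_\rho-x)-\H^{-1}\ds-t\rho(z_\rho^{(0,0)}-x^{(0,0)}).
\]
The expression $-\H^{-1}\W\ds-\H^{-1}\ds=-\ds$ then simplifies, because $\H=\I_{\E}+\W$. The sum reduces to $\B^*(c-s-\ds)-t\rho\B^*(z_\rho^{(0,0)}-x^{(0,0)})$.

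The second row of \eqref{eq:SNMt-Newton-direction} gives $c-s-\ds=\A^*(\lambda+\dlam)-t(\A^*\lambda^{(0,0)}+s^{(0,0)}-c)$, and $\B^*\A^*=0$. So the sum equals $t\B^*\big(c-s^{(0,0)}-\rho(z_\rho^{(0,0)}-x^{(0,0)})\big)=t\B^*(c-y_\rho^{(0,0)})$, which is exactly the required perturbation term.

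\emph{Converse.} Assume $(\dhx,\ds)$ solves \eqref{eq:etat-Newton-direction}, and set $\dx=\B\dhx$. Then $\A\dx=0$, so the first row holds, and the third row follows from the second block row by the computation above read backwards.

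To recover the second row, I will multiply the second block row by $\rho\B^*$ and add it to the first. Running the same simplification backwards gives $\B^*\big(c-s-\ds-t(c-s^{(0,0)})\big)=0$. Because $\A^*\lambda^{(0,0)}$ lies in the range of $\A^*$, this is equivalent to
\[
r:=s+\ds-c-t(\A^*\lambda^{(0,0)}+s^{(0,0)}-c)\in(\ker\A)^\perp=\operatorname{range}\A^*.
\]
With the stated $\dlam$ we get $\A^*(\lambda+\dlam)+r=(\I_{\E}-\A^*(\A\A^*)^{-1}\A)r=0$, which is the second row. Uniqueness again comes from Proposition~\ref{prop:uniqueness-of-SNM-Newton}.

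The only delicate step is the bookkeeping of the $t$-terms in the first block row. The term $\rho(z_\rho^{(0,0)}-x^{(0,0)})$ coming from the third row must combine with $s^{(0,0)}$ from the second row to form $y_\rho^{(0,0)}$, and the $\A^*\lambda^{(0,0)}$ piece must be annihilated by $\B^*$. This is where sign errors could occur, so I will check that step most carefully.
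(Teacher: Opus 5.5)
Your proposal is correct and follows the route the paper indicates: it reruns the proof of Theorem~\ref{thm:SNM-equivalent-BAL-function} with the shift terms carried along, and it takes uniqueness from the unchanged coefficient operators. Your handling of the $t$-terms checks out, including the recombination $s^{(0,0)}+\rho(z_\rho^{(0,0)}-x^{(0,0)})=y_\rho^{(0,0)}$ in the forward direction, the annihilation of $\A^*\lambda^{(0,0)}$ by $\B^*$, and the identity $\A^*(\lambda+\dlam)+r=0$ in the converse. Your proof fills in precisely the details the paper omits.
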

\begin{proof}
 The proof follows the same arguments as in Theorem~\ref{thm:SNM-equivalent-BAL-function}, with a minor modification
due to the shift term. Details are omitted.
\end{proof}

We are now ready to state the first phase of the proposed method. Starting from an arbitrary point $w^{(0,0)}\in K(\theta_\rho)$, the purpose of this phase is to follow the perturbed minimax problems defined by $\eta_{t,\rho}$ and to drive the iterate into the neighborhood $\mathcal N(\kappa,\mu^{(0)},\rho)$ for the fixed smoothing parameter $\mu^{(0)}$. The parameter $t$ plays the role of a homotopy parameter: for a suitable $t^{(0)}\in(0,1]$ such that
\begin{equation}\label{ini-assumption}
        (1-t^{(0)}) \delta_\rho (w^{(0,0)}; \mu^{(0)}) \le \frac{\kappa}{2},
\end{equation}
the initial point is close to the saddle point of the perturbed problem, while decreasing $t$ gradually removes the perturbation and recovers the original reduced minimax problem.

\begin{algorithm}[H]
\caption{The first phase of PFSNM}
\label{alg:initiation}
\begin{algorithmic}[1]
\Require $w^{(0,0)}\in K(\theta_\rho)$, $\lambda^{(0,0)}\in \R^m$, $\rho>0$, $\mu^{(0)}>0$, and $t^{(0)}\in(0,1]$ satisfying \eqref{ini-assumption}.

\For{$j=0,1,2,\ldots$}
    \If{$\delta_\rho(w^{(0,j)};\mu^{(0)})\leq\kappa$}
        \State Compute the Newton direction $\Delta v^{(0,j)}$ from \eqref{SNM-Newton-direction-rho}. 
        \State \textbf{return} $v^{(0)} = v^{(0,j)}+\Delta v^{(0,j)}.$
    \Else
    \State Update
    \begin{equation}\label{alpha}
    \;\;\;\quad\alpha^{(j)}
    =
    \min\left\{
    \frac{\kappa}{
    4t^{(j)}
    \left\|
    \bigl(D^2_{ww}\eta_\rho(w^{(0,j)};\mu^{(0)})\bigr)^{-1}
    \nabla_w\eta_\rho(w^{(0,0)};\mu^{(0)})
    \right\|_{\eta_\rho,w^{(0,j)}}},
    1
    \right\}.
    \end{equation}
    \State Set
    $
    t^{(j+1)}=(1-\alpha^{(j)})t^{(j)}.
    $
    \State Compute the search direction $\Delta v^{(0,j)}$ from \eqref{eq:SNMt-Newton-direction} with $t=t^{(j+1)}$ and set
    \begin{equation*}
    v^{(0,j+1)} = v^{(0,j)}+\Delta v^{(0,j)}.
    \end{equation*}
    \EndIf

\EndFor

\end{algorithmic}
\end{algorithm}

Throughout Algorithm~\ref{alg:initiation},
the iterates $(x^{(0,j)}, s^{(0,j)}, \lambda^{(0,j)})$ always satisfy the primal constraint, as follows from \eqref{eq:SNMt-Newton-direction}. When Algorithm~\ref{alg:initiation} terminates, the second equation of \eqref{SNM-Newton-direction-rho} yields 
$$
    \A^* \lambda^{(0)} + s^{(0)} - c = 0.
$$
Furthermore, since $\delta_\rho(w^{(0,j)};\mu^{(0)})\le \kappa=0.1<1$, 
Theorem~\ref{thm:properties-for-self-concordant-convex-concave-function}(iii) yields
\begin{equation*}
\xi_\rho(w^{(0)};\mu^{(0)})
\le
\left(
\frac{\delta_\rho(w^{(0,j)};\mu^{(0)})}
     {1-\delta_\rho(w^{(0,j)};\mu^{(0)})}
\right)^2 \le
\frac12\delta_\rho(w^{(0,j)};\mu^{(0)})
\le \kappa.
\end{equation*}
Hence $v^{(0)} \in \mathcal{N}(\kappa, \mu^{(0)},\rho)$.

Once a point in $\mathcal N(\kappa,\mu^{(0)},\rho)$ has been obtained, the second phase decreases the smoothing parameter and recenters the iterate for each new value of $\mu$.  The analysis in Section~\ref{sec5} will show that, after each update of $\mu$, only one Newton step is sufficient to return to the neighborhood.

\begin{algorithm}[H]
\caption{The second phase of PFSNM}
\label{alg:main}
\begin{algorithmic}[1]
\Require $v^{(0)}\in \mathcal{N}(\kappa,\mu^{(0)},\rho)$, $\rho>0$, $\mu^{(0)}>0$, $\varepsilon>0$, and $\sigma\in(0,1)$.
\For{$k=0,1,2,\ldots$}
    \If{$\mu^{(k)}\le \varepsilon$}
        \State \textbf{return} $v^{(k)}$.
    \Else
        \State Update $\mu^{(k+1)}=\sigma\mu^{(k)}$.
        \State Compute the search direction $\Delta v^{(k)}$ from \eqref{SNM-Newton-direction-rho} with $\mu=\mu^{(k+1)}$, and set
        \begin{equation*}
        v^{(k+1)}=v^{(k)}+\Delta v^{(k)}.
        \end{equation*}   
    \EndIf
\EndFor
\end{algorithmic}
\end{algorithm}

After each reduction of the smoothing parameter, a single full Newton step is sufficient to recenter the iterate; more precisely, the resulting point $v^{(k+1)}$ belongs to the new neighborhood $\mathcal{N}(\kappa,\mu^{(k+1)},\rho)$. This fact is established in Theorem~\ref{thm:complexity of phase 2}. Therefore, Algorithm~\ref{alg:main} retains the standard path-following structure.

Several remarks on Algorithms~\ref{alg:initiation} and \ref{alg:main}
are in order:
\begin{enumerate}
    \item In practical computations, the variable $\hx$ is never formed explicitly. Computations are performed directly with the primal variable $x$ since $\delta_\rho(w^{(0,j)};\mu^{(0)})$ can be evaluated from $(x,s,\lambda)$; see   Remark~\ref{rmk:delta-equivalence} for details.
    \item There is no need to form the Newton systems of $\eta_\rho$ or $\eta_{t,\rho}$ with respect to $(\hx,s)$ explicitly. By Theorem~\ref{thm:SNM-equivalent-BAL-function} and Corollary~\ref{coro:Newton-direction-t-equivalence}, updating $(x,s,\lambda)$ is in fact equivalent to updating $(\hx,s)$.
\item The iterate entering Algorithm~\ref{alg:main} satisfies the primal and dual constraints. Consequently, by \eqref{SNM-Newton-direction-rho} and the iteration scheme, all subsequent iterates remain feasible:
\begin{equation}\label{eq:initial=0}
\begin{aligned}
     \A x^{(k)} &=b,\, &\A^* \lambda^{(k)} + s^{(k)} =c,\quad \forall\,  k\ge 0.
    \end{aligned}
\end{equation}

\item  Once Algorithm~\ref{alg:main} terminates, we have 
\begin{equation*}
\mu^{(k)} \le \varepsilon \;\text{ and } \;\xi_\rho(w^{(k)};\mu^{(k)})=\|\nabla_{w} \eta_\rho (w^{(k)};\mu^{(k)})\|_{\eta_\rho,w^{(k)},\mu^{(k)}}^*\le \kappa.
\end{equation*}
It follows from the definition of $\xi_\rho$ and Corollary~\ref{coro:differentiability-and-derivatives-of-eta} that
\begin{equation*}
\begin{aligned}
      \|z^{(k)}_\rho-x^{(k)}\| & \le \sqrt{\langle z^{(k)}_\rho -x^{(k)}, \H (z^{(k)}_\rho-x^{(k)}) \rangle}\\
    &  \le \sqrt{\frac{\mu^{(k)}}{\rho}} \|  \nabla_{w} \eta_\rho ( w^{(k)};\mu^{(k)})\|^*_{\eta_\rho,w^{(k)},\mu^{(k)}}\\
    & \le \sqrt{\frac{\varepsilon}{\rho}} \kappa.
\end{aligned}
\end{equation*}
\end{enumerate}

Recall that $\Phi_\rho (x^{(k)},s^{(k)};\mu^{(k)})  = 2(x^{(k)}-z_\rho( x^{(k)},s^{(k)};\mu^{(k)} ))$. By (iii) and (iv), when Algorithm~\ref{alg:main} terminates, an approximate KKT triple $(x^{(k)}, s^{(k)}, \lambda^{(k)})$ for the original SCP problem~\eqref{SCP} is obtained, satisfying 
\begin{equation}
 \A x^{(k)} =b,\; \A^* \lambda^{(k)} + s^{(k)} = c,\; \Vert \Phi_\rho (x^{(k)},s^{(k)};\mu^{(k)}) \Vert \le  2\kappa\sqrt{ \frac{\varepsilon}{\rho} } .
\end{equation}

\subsection{Linear systems in the algorithms}\label{subsec:linear system}

In practical computations, the primal constraint is satisfied at all iterates. Thus, the Newton systems \eqref{eq:SNMt-Newton-direction}  and \eqref{SNM-Newton-direction-rho}  arising in Algorithms~\ref{alg:initiation} and \ref{alg:main} can be written in the unified form

\begin{equation}\label{Newton direction}
\begin{pmatrix}
 \A&  0 & 0\\
0 &\I_{\E} &\A^*\\
\H^{-1}\W&\rho^{-1}\H^{-1}&0
\end{pmatrix}
\begin{pmatrix}
\Delta x \\ \Delta s \\ \Delta \lambda
\end{pmatrix}=
\begin{pmatrix}
0\\ r_1 \\ r_2
\end{pmatrix},
\end{equation}
where 
$$
\begin{aligned}
    & r_1 = 
    \begin{cases}
        -\A^* \lambda -s + c + t( \A^* \lambda^{(0,0)} + s^{(0,0)}-c ), & \text{for the first phase system } \eqref{eq:SNMt-Newton-direction};\\
        0, & \text{for the second phase system } \eqref{SNM-Newton-direction-rho},
    \end{cases}
    \\
     & r_2 = \begin{cases}
     z_\rho-x-t(z_\rho^{(0,0)}-x^{(0,0)}), & \text{for the first phase system } \eqref{eq:SNMt-Newton-direction};\\
     z_\rho-x, & \text{for the second phase system } \eqref{SNM-Newton-direction-rho}.
     \end{cases}
\end{aligned}
$$

Directly solving the full Newton system \eqref{Newton direction} is computationally expensive, particularly for large-scale problems when the dimensions of the cone variables $x$ and $s$ are significantly larger than the number of constraints. To reduce the computational cost, we eliminate $\dx$ and $\ds$ from \eqref{Newton direction},  which leads to the following Schur-complement system:
\begin{subequations}\label{Schur complement system}
\begin{align}
\A \W^{-1} \A^* \dlam &= \A \W^{-1} r_1 -\rho\A \W^{-1}\H r_2,\\
\Delta s &= r_1 -\A^*\dlam,\\
\Delta x&= 
 \W^{-1}(\H r_2 -\rho^{-1}\Delta s).
\end{align}
\end{subequations}

For both IPMs and classical SNMs, the dominant computational cost typically comes from forming and factorizing the Schur complement $\A \D \A^{*}$. In our method, $\D=\W^{-1}$, where $\W$ is an iterate-dependent operator determined by the barrier function in PFSNM. Accordingly, improving the efficiency of forming $\A \D \A^{*}$ is crucial for large-scale computation. To this end, Proposition~\ref{pro:closed_forms} provides closed-form expressions for  $\A \D \A^{*}$ in PFSNM for the three most common symmetric cones.

\begin{proposition}\label{pro:closed_forms}
	Let $\K$ be one of the symmetric cones considered below, and let $e$ denote the corresponding Jordan identity element. Then the corresponding Schur complement admits
 the following explicit representations.
	\begin{enumerate}
		\item[(i)] Let $\K=\R^{n}_{+}$. Then, for any $z\in\R^{n}_{++}$, $\phi(z)=-\sum_{i=1}^{n}\ln\, z_i$, and
		\begin{equation*}
		\A \W^{-1}\A^*=\frac{\rho}{\mu}\A \big(\Diag(z_\rho)\big)^{2} \A^*.
		\end{equation*}
		\item[(ii)] Let $\K=\Q^{n+1}$.
		Then, for any $z\in\interior(\Q^{n+1})$,
		$
		\phi(z)=-\ln\big(z_{0}^{2}-\|\bar z\|^{2}\big),
        $ and
		\begin{equation*}
		\A \W^{-1}\A^*=\frac{\rho}{\mu}\Big(\det(z_\rho)\,\A\A^{*}+2(\A z_\rho)(\A z_\rho)^{*}-2\det(z_\rho)\,(\A e)(\A e)^{*}\Big).
		\end{equation*}
		\item[(iii)] Let $\K=\S^{n}_{+}$. Then, for any $Z\in\S^{n}_{++}$, $\phi(Z)=-\ln\det(Z)$, and 
	\begin{equation*}
			\A \W^{-1}\A^*=\frac{\rho}{\mu}\,\A (Z_\rho\otimes_{s}Z_\rho)\A^{*}.
		\end{equation*}
	\end{enumerate}
\end{proposition}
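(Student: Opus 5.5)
Since $\W=\frac{\mu}{\rho}D^2\phi(z_\rho)$, we have $\W^{-1}=\frac{\rho}{\mu}\bigl(D^2\phi(z_\rho)\bigr)^{-1}$. Each claim therefore reduces to an explicit formula for the inverse Hessian of the natural barrier at $z=z_\rho\in\interior(\K)$. The factor $\frac{\rho}{\mu}$ then appears in front of every expression. For a general Euclidean Jordan algebra, I would use the known identity $D^2\phi(z)=P(z)^{-1}=P(z^{-1})$, where $P(z)=2L(z)^2-L(z^2)$ is the quadratic representation. This gives $(D^2\phi(z))^{-1}=P(z)$. I would then evaluate $P(z)$ concretely in each of the three algebras and sandwich the result between $\A$ and $\A^*$.

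For (i), the Hessian of $\phi(z)=-\sum_i\ln z_i$ is $\Diag(z)^{-2}$, so its inverse is $\Diag(z)^2$. This can be checked directly without the Jordan machinery.

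For (iii), $D^2\phi(Z)[H,H]=\tr(Z^{-1}HZ^{-1}H)$, so $D^2\phi(Z)H=Z^{-1}HZ^{-1}$ and the inverse is $H\mapsto ZHZ$. In symmetrized Kronecker notation this operator on symmetric matrices is $Z\otimes_s Z$. The only care needed is that $\A$ acts on $\S^n$, so $\otimes_s$ is the right restriction.

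For (ii), I would compute the Hessian of $\phi(z)=-\ln(z^\top Jz)$ with $J=\Diag(1,-I_n)$ and $\det(z)=z^\top Jz$. The gradient is $-2Jz/\det(z)$, and the Hessian is
$$D^2\phi(z)=\frac{-2J}{\det(z)}+\frac{4Jzz^\top J}{\det(z)^2}.$$
This is a rank-one perturbation of a multiple of the invertible matrix $J$, so I would invert it with Sherman--Morrison. One has $(-2J/\det z)^{-1}=-\frac{\det z}{2}J$. The correction term involves $z^\top J\,J\,Jz=z^\top Jz=\det z$, so the denominator is $1+4\det z\cdot(-\tfrac{1}{2})/\det z=-1$. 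This yields
$$(D^2\phi(z))^{-1}=-\tfrac{\det z}{2}J+2zz^\top.$$
Equivalently, it is $P(z)=2zz^\top-\det(z)J$ up to the normalization used for the trace inner product. Writing $J=2ee^\top-I$ with $e=(1;0)$ turns $-\det(z)J$ into $\det(z)I-2\det(z)ee^\top$, which matches the claimed form $\det(z)\A\A^*+2(\A z)(\A z)^*-2\det(z)(\A e)(\A e)^*$.

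The main obstacle is bookkeeping of constants in (ii). Under the trace inner product $\langle x,y\rangle=\tr(x\circ y)=2x^\top y$ for the second-order cone, the identification of $D^2\phi$ with a matrix and of $\A^*$ changes by factors of $2$. So I would fix the convention consistent with $\A\A^*$ in the statement and verify that the factors $\frac12$ and $2$ combine to give exactly $\det(z)I+2zz^\top-2\det(z)ee^\top$. If the standard inner product is used instead, the result is $\frac{\det z}{2}(I-2ee^\top)+2zz^\top$. In that case I would reconcile the two by checking the identity $D^2\phi(z)\,P(z)=\I$ in the Jordan framework with $\langle\cdot,\cdot\rangle=\tr(\cdot\circ\cdot)$, which the statement uses.
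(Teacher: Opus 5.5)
Your route is the paper's own: the paper writes $\W^{-1}=\frac{\rho}{\mu}(D^2\phi(z_\rho))^{-1}$ and cites the identity $D^2\phi(z)=P(z)^{-1}$. Your treatment of (i) and (iii) is fine. The gap is in the one nontrivial computation, case (ii).

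Your coordinate Hessian $M=-\frac{2}{\det z}J+\frac{4}{(\det z)^2}Jzz^\top J$ is correct, and so is the Sherman--Morrison denominator $-1$. Write $M_0=-\frac{2}{\det z}J$, $u=\frac{4}{(\det z)^2}Jz$, $v=Jz$. The correction term is then $M_0^{-1}u\,v^\top M_0^{-1}=\bigl(-\tfrac{2}{\det z}z\bigr)\bigl(-\tfrac{\det z}{2}z^\top\bigr)=zz^\top$. Hence $M^{-1}=zz^\top-\tfrac{\det z}{2}J$, not $2zz^\top-\tfrac{\det z}{2}J$. As a check, at $z=e$ one has $M=2I$, so $M^{-1}=\tfrac12 I$, while your formula gives $\Diag(\tfrac32,\tfrac12,\dots,\tfrac12)$.

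The claims you build on this therefore fail. The matrix $2zz^\top-\tfrac{\det z}{2}J$ is not a scalar multiple of $P(z)=2zz^\top-\det(z)J$. So no renormalization of the inner product turns it into the stated Schur complement. Likewise, your standard-inner-product variant should read $\tfrac{\det z}{2}(I-2ee^\top)+zz^\top$.

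After the fix, the bookkeeping closes exactly as you hoped. You get $M^{-1}=\tfrac12P(z)$, and the factor $2$ is precisely the trace normalization. On $\Q^{n+1}$ we have $\langle x,y\rangle=\tr(x\circ y)=2x^\top y$, so the operator representing the bilinear form $D^2\phi(z)$ has matrix $\tfrac12 M$. Hence $(D^2\phi(z))^{-1}$ has matrix $2M^{-1}=P(z)=\det(z)I+2zz^\top-2\det(z)ee^\top$. Formula (ii) is then $\frac{\rho}{\mu}\A P(z_\rho)\A^*$ expanded term by term, which is all the paper's one-line proof does. You can also avoid the inversion entirely: computing $P(z)=2L(z)^2-L(z^2)$ with $L(z)=\Arw(z)$ gives this matrix directly.

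For your reconciliation step, note that the $m\times m$ matrix $\A(D^2\phi(z))^{-1}\A^*$ does not depend on the inner product. Writing $A$ for the coordinate matrix of $\A$, it is always $AM^{-1}A^\top=\tfrac12AP(z)A^\top$, because $\A^*$ changes by a compensating factor. So the displayed formula must be read in the trace convention throughout: read $(\A z)(\A z)^*$ as $\A(zz^\top)\A^*$, and likewise for $e$. If you instead read it with Euclidean adjoints ($\A\A^*=AA^\top$), you get twice the true Schur complement.
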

\begin{proof}
By definition, ${\W}=\dfrac{\mu}{\rho}D^2\phi(z_\rho)$. The results follow directly from the explicit formulas (see \cite[Proposition 2.6.1]{vieira2007jordan}) for $D^2\phi$.
\end{proof}

The importance of explicit Schur-complement formulas has already been noted in the SDP literature. The SNM in \cite{chen2003non}, based on the smoothing FB function, has a Schur-complement formation cost comparable to that of the most expensive Alizadeh--Haeberly--Overton (AHO) direction in IPMs. If the smoothing CHKS function is used instead, the formation cost becomes lower than that of AHO, but remains higher than that of the Nesterov--Todd (NT) direction. In our earlier work \cite{zhang2024iprsdp}, we showed that once an explicit formula for the Schur complement is available, the formation cost can be reduced to the same order as that of the NT direction. Compared with~\cite{zhang2024iprsdp}, this paper leverages self-concordant properties to derive this explicit Schur-complement formula in a simpler and more direct way.

For SOCP, the advantage of the explicit Schur complement is even more significant. As shown in case (ii) of Proposition~\ref{pro:closed_forms}, the Schur complement takes the form of a scaled matrix plus two rank-one updates. However, the vector $u:=\A z_\rho$ appearing in these rank-one terms is typically dense, which causes the resulting Schur complement to be dense as well. Figure~\ref{fig:SCM of SNM} illustrates this structure and shows how these components combine to yield a fully dense matrix. This density poses a major computational challenge in large-scale settings. Even accelerating the evaluation of $\D$ as described in \cite{fukushima2002smoothing} does not solve the problem, because the Schur complement remains dense. The explicit representation in Proposition~\ref{pro:closed_forms} avoids forming this dense matrix. The terms $\A\A^*$ and $\A e$ depend only on the problem data and can be precomputed once. Each subsequent iteration then requires only one matrix-vector product $u=\A z_\rho$, along with simple low-rank updates and the scalar $\det(z_\rho)$. With this structure, one can apply either the product-form Cholesky factorization approach in \cite{alizadeh2003second} or the expanded sparse representation technique in \cite{zhang2026iprsocp}. Both approaches exploit the low-rank structure and avoid forming a dense Schur complement.

\vspace{2em}

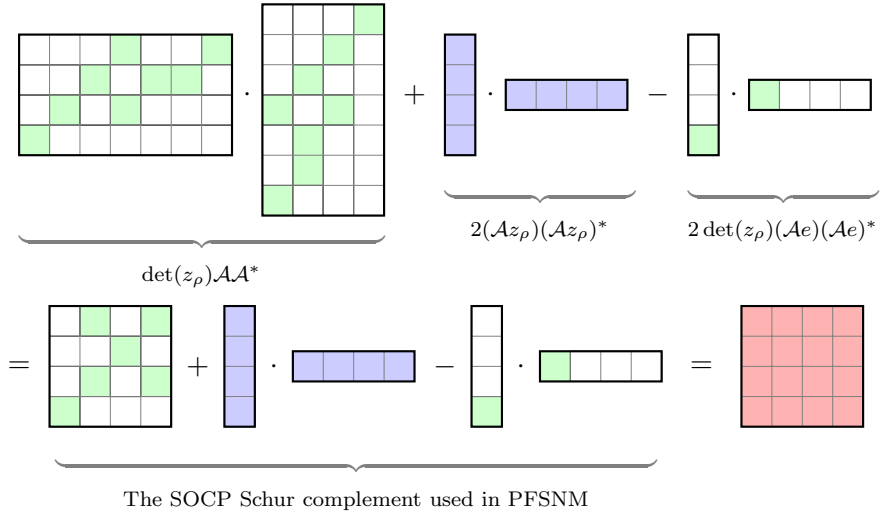
\begin{figure}
\begin{tikzpicture}

    \filldraw[fill=green!20!white, draw=green!40!black] (0,0.4) rectangle (2.8,2.0);

    \filldraw[fill=white] (0.4,0.4) rectangle (2.8,0.8);

    \filldraw[fill=white] (0,0.8) rectangle (0.4,1.2);
    \filldraw[fill=white] (0.8,0.8) rectangle (1.2,1.2);
    \filldraw[fill=white] (1.6,0.8) rectangle (2.8,1.2);

    \filldraw[fill=white] (0,1.2) rectangle (0.8,1.6);
    \filldraw[fill=white] (1.2,1.2) rectangle (1.6,1.6);
    \filldraw[fill=white] (2.4,1.2) rectangle (2.8,1.6);

    \filldraw[fill=white] (0,1.6) rectangle (1.2,2.0);
    \filldraw[fill=white] (1.6,1.6) rectangle (2.4,2.0);

    \draw[step=0.4, very thin, color=gray] (0,0.4) grid (2.8,2.0);
    \draw[thick] (0,0.4) rectangle (2.8,2.0);

    \node at (3.0,1.2) {\large $\cdot$};

    \filldraw[fill=green!20!white, draw=green!40!black] (3.2,-0.4) rectangle (4.8,2.4);

    \filldraw[fill=white] (3.2,0.0) rectangle (3.6,0.8);
    \filldraw[fill=white] (3.2,1.2) rectangle (3.6,2.4);

    \filldraw[fill=white] (3.6,-0.4) rectangle (4.0,0.0);
    \filldraw[fill=white] (3.6,0.8) rectangle (4.0,1.2);
    \filldraw[fill=white] (3.6,1.6) rectangle (4.0,2.4);

    \filldraw[fill=white] (4.0,-0.4) rectangle (4.4,0.8);
    \filldraw[fill=white] (4.0,1.2) rectangle (4.4,1.6);
    \filldraw[fill=white] (4.0,2.0) rectangle (4.4,2.4);

    \filldraw[fill=white] (4.4,-0.4) rectangle (4.8,2.0);

    \draw[step=0.4, very thin, color=gray] (3.2,-0.4) grid (4.8,2.4);
    \draw[thick] (3.2,-0.4) rectangle (4.8,2.4);

    \node at (5.2,1.2) {\large $+$};

    \filldraw[fill=blue!20!white, draw=blue!40!black] (5.6,0.4) rectangle (6.0,2.0);
    \foreach \y in {0.8,1.2,1.6}
        \draw[very thin, gray] (5.6,\y) -- (6.0,\y);
    \draw[thick] (5.6,0.4) rectangle (6.0,2.0);

    \node at (6.20,1.2) {\large $\cdot$};

    \filldraw[fill=blue!20!white, draw=blue!40!black] (6.4,1.0) rectangle (8.0,1.4);
    \foreach \x in {6.8,7.2,7.6}
        \draw[very thin, gray] (\x,1.0) -- (\x,1.4);
    \draw[thick] (6.4,1.0) rectangle (8.0,1.4);

    \node at (8.4,1.2) {\large $-$};

    \filldraw[fill=green!20!white, draw=green!40!black] (8.8,0.4) rectangle (9.2,2.0);
    \filldraw[fill=white] (8.8,0.8) rectangle (9.2,2.0);
    \foreach \y in {0.8,1.2,1.6}
        \draw[very thin, gray] (8.8,\y) -- (9.2,\y);
    \draw[thick] (8.8,0.4) rectangle (9.2,2.0);

    \node at (9.4,1.2) {\large $\cdot$};

    \filldraw[fill=green!20!white, draw=green!40!black] (9.6,1.0) rectangle (11.2,1.4);
    \filldraw[fill=white] (10.0,1.0) rectangle (11.2,1.4);
    \foreach \x in {10.0,10.4,10.8}
    \draw[very thin, gray] (\x,1.0) -- (\x,1.4);
    \draw[thick] (9.6,1.0) rectangle (11.2,1.4);

    \draw (2.4,-0.75) node[rotate=0]
    {\color{gray}$\underbrace{\hspace{4.8cm}}$};
    \node at (2.4,-1.20) {\small$\det(z_\rho)\A\A^*$};

    \draw (6.85,-0.15) node[rotate=0]
    {\color{gray}$\underbrace{\hspace{2.5cm}}$};
    \node at (6.85,-0.60) {\small$2(\A z_\rho)(\A z_\rho)^*$};

    \draw (10.05,-0.15) node[rotate=0]
    {\color{gray}$\underbrace{\hspace{2.5cm}}$};
    \node at (10.05,-0.60) {\small$2\det(z_\rho)(\A e)(\A e)^*$};

    \node at (0,-2.4) {\large $=$};

    \filldraw[fill=green!20!white, draw=green!40!black] (0.4,-3.2) rectangle (2.0,-1.6);

    \filldraw[fill=white] (0.8,-3.2) rectangle (2.0,-2.8);
    \filldraw[fill=white] (0.4,-2.8) rectangle (0.8,-2.4);
    \filldraw[fill=white] (1.2,-2.8) rectangle (1.6,-2.4);
    \filldraw[fill=white] (0.4,-2.4) rectangle (1.2,-2.0);
    \filldraw[fill=white] (1.6,-2.4) rectangle (2.0,-2.0);
    \filldraw[fill=white] (0.4,-2.0) rectangle (0.8,-1.6);
    \filldraw[fill=white] (1.2,-2.0) rectangle (1.6,-1.6);

    \draw[step=0.4, very thin, color=gray] (0.4,-3.2) grid (2.0,-1.6);
    \draw[thick] (0.4,-3.2) rectangle (2.0,-1.6);

    \node at (2.35,-2.4) {\large $+$};

    \filldraw[fill=blue!20!white, draw=blue!40!black] (2.7,-3.2) rectangle (3.1,-1.6);
    \foreach \y in {-2.8,-2.4,-2.0}
        \draw[very thin, gray] (2.7,\y) -- (3.1,\y);
    \draw[thick] (2.7,-3.2) rectangle (3.1,-1.6);

    \node at (3.35,-2.4) {\large $\cdot$};

    \filldraw[fill=blue!20!white, draw=blue!40!black] (3.6,-2.6) rectangle (5.2,-2.2);
    \foreach \x in {4.0,4.4,4.8}
        \draw[very thin, gray] (\x,-2.6) -- (\x,-2.2);
    \draw[thick] (3.6,-2.6) rectangle (5.2,-2.2);

    \node at (5.6,-2.4) {\large $-$};

    \filldraw[fill=green!20!white, draw=green!40!black] (5.95,-3.2) rectangle (6.35,-1.6);
    \filldraw[fill=white] (5.95,-2.8) rectangle (6.35,-1.6);
    \foreach \y in {-2.8,-2.4,-2.0}
        \draw[very thin, gray] (5.95,\y) -- (6.35,\y);
    \draw[thick] (5.95,-3.2) rectangle (6.35,-1.6);

    \node at (6.6,-2.4) {\large $\cdot$};

    \filldraw[fill=green!20!white, draw=green!40!black] (6.85,-2.6) rectangle (8.45,-2.2);
    \filldraw[fill=white] (7.25,-2.6) rectangle (8.45,-2.2);
    \foreach \x in {7.25,7.65,8.05}
        \draw[very thin, gray] (\x,-2.6) -- (\x,-2.2);
    \draw[thick] (6.85,-2.6) rectangle (8.45,-2.2);

    \node at (8.975,-2.4) {\large $=$};

    \begin{scope}[shift={(9.5,-3.2)}]
        \foreach \i in {0,1,2,3}{
            \foreach \j in {0,1,2,3}{
                \fill[red!30!white] ({0.4*\j},{0.4*\i}) rectangle ({0.4*(\j+1)},{0.4*(\i+1)});
            }
        }
        \draw[step=0.4, very thin, color=gray] (0,0) grid (1.6,1.6);
        \draw[thick] (0,0) rectangle (1.6,1.6);
    \end{scope}

    \draw (4.43,-3.72) node[rotate=0]
    {\color{gray}$\underbrace{\hspace{7.9cm}}$};
    \node at (4.43,-4.18) {\small The SOCP Schur complement used in PFSNM};

\end{tikzpicture}
\caption{Structure of the SOCP Schur complement in PFSNM}
 \label{fig:SCM of SNM}
\end{figure}

\section{Complexity analysis}\label{sec5}

In this section, we establish a polynomial iteration complexity for PFSNM applied to SCP. The analysis consists of two parts. We first derive an iteration complexity for the first phase. Starting from the point produced by this phase, Algorithm~\ref{alg:main} then attains an iteration complexity of order $\O(\sqrt{\nu}\ln(1/\varepsilon))$, matching the classical short-step interior-point complexity \cite{vavasis1996primal}.

The iteration complexity for the first phase is presented in the following theorem.

\begin{theorem}\label{initialization-complexity-analysis}
    Suppose that $w^{(0,0)}\in K(\theta_\rho)$ and that $t^{(0)}$ in Algorithm~\ref{alg:initiation} satisfies \eqref{ini-assumption}.
    Then Algorithm~\ref{alg:initiation} requires at most 
    \begin{equation}
       \O\left(
1+
\frac{
4t^{(0)}
}{\kappa}\Theta_1\left(\dfrac{\theta_\rho(w^{(0,0)};\mu^{(0)})}{\mu^{(0)}}\right)
\right)
    \end{equation}
    iterations to obtain a starting point $v^{(0)} \in \mathcal{N}(\kappa,\mu^{(0)},\rho)$, where $\Theta_1 (\cdot)$ is a properly chosen universal positive, continuous, and nondecreasing function on $\R_+$.
\end{theorem}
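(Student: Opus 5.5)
The plan is to follow Nemirovski's homotopy (``damped path-following from an arbitrary start'') argument for self-concordant convex-concave functions, applied to the family $\eta_{t,\rho}(\cdot;\mu^{(0)})$. All of these functions share the Hessian $D^2\eta_\rho$, since the perturbation is linear. Write $g:=\nabla_w\eta_\rho(w^{(0,0)};\mu^{(0)})$. The invariant to maintain is that at each iteration $j$ before termination, $w^{(0,j)}$ is close to the saddle point of $\eta_{t^{(j)},\rho}$, in the sense that
\[
\xi_{t^{(j)}}(w^{(0,j)}):=\|\nabla_w\eta_\rho(w^{(0,j)};\mu^{(0)})-t^{(j)}g\|^*_{\eta_\rho,w^{(0,j)}}\le \tfrac{\kappa}{2}.
\]

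\textbf{Base case.} At $j=0$ the gradient of $\eta_{t^{(0)},\rho}$ at $w^{(0,0)}$ is $(1-t^{(0)})g$. The local dual norm of $g$ equals $\delta_\rho(w^{(0,0)};\mu^{(0)})$, since $\Delta w=-(D^2\eta)^{-1}g$ and $\|\Delta w\|_{\eta,w}=\|g\|^*_{\eta,w}$ by the indefinite-metric identity. Hence \eqref{ini-assumption} gives exactly $\xi_{t^{(0)}}(w^{(0,0)})\le\kappa/2$.

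\textbf{Inductive step.} Changing $t^{(j)}$ to $t^{(j+1)}=(1-\alpha^{(j)})t^{(j)}$ perturbs the gradient by $\alpha^{(j)}t^{(j)}g$. By the choice \eqref{alpha}, this perturbation has local dual norm
\[
\alpha^{(j)}t^{(j)}\|(D^2\eta)^{-1}g\|_{\eta,w^{(0,j)}}\le\kappa/4,
\]
so $\xi_{t^{(j+1)}}(w^{(0,j)})\le 3\kappa/4<2-\sqrt3$. The full Newton step for $\eta_{t^{(j+1)},\rho}$ is, by Corollary~\ref{coro:Newton-direction-t-equivalence}, exactly the step taken. Theorem~\ref{thm:properties-for-self-concordant-convex-concave-function}(ii)--(iii), applied to the $\mu^{(0)}$-self-concordant convex-concave function $\eta_{t^{(j+1)},\rho}$, then gives $\xi_{t^{(j+1)}}(w^{(0,j+1)})\le 3\kappa/8\le\kappa/2$, so the invariant persists.

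\textbf{Termination test.} When $t^{(j)}$ is small, $\delta_\rho(w^{(0,j)})$ is bounded by
\[
\xi_\rho(w^{(0,j)})\le \xi_{t^{(j)}}(w^{(0,j)})+t^{(j)}\|g\|^*_{\eta,w^{(0,j)}}\le\kappa,
\]
and the test in Algorithm~\ref{alg:initiation} fires. Also, whenever $\alpha^{(j)}=1$ we get $t^{(j+1)}=0$ and termination at the next step. So it suffices to count iterations with $\alpha^{(j)}<1$. In each such iteration $t$ decreases by the additive amount
\[
t^{(j)}\alpha^{(j)}=\frac{\kappa}{4\|(D^2\eta(w^{(0,j)}))^{-1}g\|_{\eta,w^{(0,j)}}}.
\]
The number of such steps is therefore at most $t^{(0)}\cdot\frac{4}{\kappa}\cdot\sup_j\|g\|^*_{\eta,w^{(0,j)}}$, which is the claimed bound provided
\[
\sup_j \|g\|^*_{\eta_\rho,w^{(0,j)}}\le\Theta_1\bigl(\theta_\rho(w^{(0,0)};\mu^{(0)})/\mu^{(0)}\bigr).
\]

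\textbf{Main obstacle.} The key step, and the one I expect to be hardest, is bounding the local dual norm of the fixed vector $g$ at all iterates uniformly. I would first show that all iterates stay in a bounded region measured by the gap. Since $\xi_{t^{(j)}}(w^{(0,j)})\le\kappa/2<1/3$, Theorem~\ref{thm:properties-for-self-concordant-convex-concave-function}(iv) and Nemirovski's gap estimates for the perturbed function bound $\theta_{t^{(j)}}(w^{(0,j)})/\mu^{(0)}$ by a universal constant. I would then relate the saddle points along the homotopy $t\mapsto w^*(t)$ to $w^{(0,0)}$, which is the exact saddle point for $t=1$. Along this homotopy the perturbed gap is controlled by $\theta_\rho(w^{(0,0)})$ plus linear terms, using convexity-concavity and $t\in[0,1]$.

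Next I would invoke Nemirovski's result \cite{nemirovski1999self} that for a nondegenerate $\alpha$-self-concordant convex-concave function, the local norms at two points $w,w'$ with $\theta(w),\theta(w')$ finite are mutually bounded by a factor depending only on $(\theta(w)+\theta(w'))/\alpha$, the analogue of the $\vartheta$-dependent comparison of Hessians of a barrier. Applying this with $w=w^{(0,0)}$, where $\|g\|^*_{\eta,w^{(0,0)}}=\delta_\rho(w^{(0,0)})$ is itself bounded in terms of $\theta_\rho(w^{(0,0)})/\mu^{(0)}$, yields the desired $\Theta_1$.

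Positivity, continuity and monotonicity of $\Theta_1$ come from the explicit form of those universal estimates. If a direct Hessian-comparison result is not available in that form, my fallback is to integrate Theorem~\ref{thm:properties-for-self-concordant-convex-concave-function}(i) along the segment between the points. The gap bound controls how many Dikin-ellipsoid steps of length $1/2$ that segment needs, and each step changes the metric by at most a factor of $4$.
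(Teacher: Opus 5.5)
\textbf{The base case and inductive step rest on a false identity.} You write $\|\Delta w\|_{\eta_\rho,w}=\|\nabla_w\eta_\rho(w;\mu)\|^*_{\eta_\rho,w}$. This holds in convex minimization, where $D^2f=S_f$, but not for a convex-concave function: here $D^2\eta_\rho$ carries the off-diagonal blocks $D^2_{\hx s}\eta_\rho=-\B^*\H^{-1}\W$.

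Treat $S_{\eta_\rho}$ as the block-diagonal operator $(D^2_{\hx\hx}\eta_\rho,-D^2_{ss}\eta_\rho)$ and let $J(h_{\hx},h_s)=(h_{\hx},-h_s)$. Then $J\,D^2\eta_\rho=S_{\eta_\rho}+N$ with $N$ skew-adjoint. With $\Delta w=-(D^2\eta_\rho)^{-1}\nabla_w\eta_\rho$, a short computation gives
$(\|\nabla_w\eta_\rho\|^*_{\eta_\rho,w})^2=\|\Delta w\|^2_{\eta_\rho,w}+\frac{1}{\mu}(S_{\eta_\rho})^{-1}[N\Delta w,N\Delta w]$.
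So only $\delta\le\xi$ holds, which is exactly Theorem~\ref{thm:properties-for-self-concordant-convex-concave-function}(ii). The extra term carries an additional factor of $\W=\frac{\mu}{\rho}D^2\phi(z_\rho)$, whose eigenvalues are unbounded.

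This breaks two steps of your induction. First, \eqref{ini-assumption} bounds $(1-t^{(0)})\|(D^2\eta_\rho)^{-1}g\|_{\eta_\rho,w^{(0,0)}}$, not $(1-t^{(0)})\|g\|^*_{\eta_\rho,w^{(0,0)}}$, so your base case $\xi_{t^{(0)}}(w^{(0,0)})\le\kappa/2$ is not implied. Second, decreasing $t$ changes your invariant by $\alpha^{(j)}t^{(j)}\|g\|^*_{\eta_\rho,w^{(0,j)}}$. The step-size rule \eqref{alpha}, however, only bounds $\alpha^{(j)}t^{(j)}\|(D^2\eta_\rho)^{-1}g\|_{\eta_\rho,w^{(0,j)}}$ by $\kappa/4$. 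Hence $\xi_{t^{(j+1)}}(w^{(0,j)})\le 3\kappa/4$ is unjustified, and you cannot invoke the ``furthermore'' clause of Theorem~\ref{thm:properties-for-self-concordant-convex-concave-function}(iii). With the algorithm as specified, a dual-norm invariant cannot be propagated.

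\textbf{The repair is the paper's route:} run the induction on the primal norm of the Newton step for $\eta_{t,\rho}$, namely $\delta_{t,\rho}(w):=\|(D^2\eta_\rho(w))^{-1}\nabla_w\eta_{t,\rho}(w)\|_{\eta_\rho,w}$.
\begin{itemize}
\item Since $\nabla_w\eta_{t^{(0)},\rho}(w^{(0,0)})=(1-t^{(0)})g$, the base case is literally \eqref{ini-assumption}.
\item Since $(D^2\eta_\rho)^{-1}\nabla_w\eta_{t,\rho}$ is affine in $t$, the triangle inequality in $\|\cdot\|_{\eta_\rho,w^{(0,j)}}$ gives $\delta_{t^{(j+1)},\rho}(w^{(0,j)})\le 3\kappa/4$ directly from \eqref{alpha}.
\item The Newton step uses the first clause of Theorem~\ref{thm:properties-for-self-concordant-convex-concave-function}(iii) together with (ii): $\delta_{t,\rho}(w^+)\le\xi_t(w^+)\le\bigl(\delta_{t,\rho}(w)/(1-\delta_{t,\rho}(w))\bigr)^2\le\kappa/2$. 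This needs only $\delta_{t,\rho}$ to be small, not $\xi_t$.
\item Termination uses $\delta_\rho(w)\le\delta_{t,\rho}(w)+t\|(D^2\eta_\rho)^{-1}g\|_{\eta_\rho,w}$.
\end{itemize}
So the quantity needing a uniform bound is $\|(D^2\eta_\rho(w^{(0,j)}))^{-1}g\|_{\eta_\rho,w^{(0,j)}}$, which the paper takes directly from Lemma~8.3(a) of \cite{nemirovski1999self}.

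Your counting argument still works. Your plan to bound the larger quantity $\sup_j\|g\|^*_{\eta_\rho,w^{(0,j)}}$ via Hessian comparison also still works, with one change. At $w^{(0,0)}$ you must bound $\xi_\rho(w^{(0,0)})$ rather than $\delta_\rho(w^{(0,0)})$. This follows from the standard estimate that, for each partial self-concordant function $\eta_\rho(\cdot,s)$ and $-\eta_\rho(\hx,\cdot)$, the normalized Newton decrement $\lambda$ satisfies $\lambda-\ln(1+\lambda)\le(\text{partial gap})/\mu^{(0)}$.
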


\begin{proof}
 Since $\mu$ remains fixed in the first phase, we write $\eta_\rho(w)$ and $\eta_{t,\rho} (w)$ in place of $\eta_\rho(w;\mu^{(0)})$ and $\eta_{t,\rho} (w;\mu^{(0)})$, respectively. Define the merit function associated with $\eta_{t,\rho}$ by
\begin{equation*}
    \delta_{t,\rho} (w): = \Vert (D^2_{w w} \eta_\rho (w))^{-1} \nabla_{w} \eta_{t,\rho} (w)\Vert_{\eta_{t,\rho},w} = \Vert (D^2_{w w} \eta_\rho (w))^{-1} \nabla_{w} \eta_{t,\rho} (w)\Vert_{\eta_\rho,w}.
\end{equation*}
Noting that $\nabla_{w} \eta_{t,\rho} (w) = \nabla_{w} \eta_\rho (w) - t \nabla_{w} \eta_\rho(w^{(0,0)})$,
\begin{equation}\label{eq:delta-t0-delta-t}
    \delta_{t^{(0)},\rho}(w^{(0,0)}) = (1-t^{(0)})\delta_\rho (w^{(0,0)}).
\end{equation}
We now prove by induction that $\delta_{t^{(j)},\rho} (w^{(0,j)}) \le \frac{\kappa}{2}$ at each iteration $j$. For $j=0$, the claim follows from \eqref{ini-assumption} and \eqref{eq:delta-t0-delta-t}. Assume that $\delta_{t^{(j)},\rho} (w^{(0,j)}) \le \frac{\kappa}{2}$ at the $j$-th iteration. Then 
\begin{equation*}
 \begin{aligned}
  &\delta_{t^{(j+1)},\rho} (w^{(0,j)}) -  \delta_{t^{(j)},\rho} (w^{(0,j)}) \\
  = &\ \Vert (D^2_{w w} \eta_\rho (w^{(0,j)}))^{-1} \nabla_{w} \eta_{t^{(j+1)},\rho} (w^{(0,j)})\Vert_{\eta_{\rho},w^{(0,j)}}\\
  & \qquad \qquad \qquad - \Vert (D^2_{w w} \eta_\rho (w^{(0,j)}))^{-1} \nabla_{w} \eta_{t^{(j)},\rho} (w^{(0,j)})\Vert_{\eta_\rho,w^{(0,j)}} \\
  \le &\  \alpha^{(j)}t^{(j)} \Vert (D^2_{w w} \eta_\rho (w^{(0,j)}))^{-1} \nabla_{w} \eta_\rho(w^{(0,0)}) \Vert_{\eta_\rho, w^{(0,j)}}\\
  \le &\ \frac{\kappa}{4}.
 \end{aligned}
 \end{equation*}
Here the first inequality follows from the identity for $\nabla_{w} \eta_{t,\rho} (w) $ and from the fact that $\| \cdot \|_{\eta_\rho,w^{(0,j)}}$ defines a norm. The second inequality follows from~\eqref{alpha}.   Thus, 
 \begin{equation}\label{inq-j+1}
     \delta_{t^{(j+1)},\rho} (w^{(0,j)}) \le \frac{\kappa}{2} + \frac{\kappa}{4} = \frac{3 \kappa}{4} < \kappa. 
  \end{equation}
By \eqref{inq-j+1} and Theorem~\ref{thm:properties-for-self-concordant-convex-concave-function}(ii)--(iii), we have 
\begin{equation*}
    \delta_{t^{(j+1)},\rho}(w^{(0,j+1)}) \le \left( \frac{\delta_{t^{(j+1)},\rho}(w^{(0,j)})}{1-\delta_{t^{(j+1)},\rho}(w^{(0,j)})} \right)^2 \le \frac{\kappa}{2},
\end{equation*}
which completes the induction argument. Hence, 
\begin{equation}\label{eq:phase 1 delta}
\delta_{t^{(j)},\rho}(w^{(0,j)}) \le \frac{\kappa}{2}, \quad \forall\, j\ge 0.
\end{equation}

By applying Nemirovski's estimate \cite[Lemma 8.3(a)]{nemirovski1999self} to the standard self-concordant convex-concave function $\frac{\eta_\rho(w;\mu^{(0)})}{\mu^{(0)}}$,
there exists a universal positive, continuous, and nondecreasing function
$\Theta_1$ such that
\begin{equation}\label{ini-proof-4}
    \Vert (D^2_{w w} \eta_\rho(w^{(0,j)}))^{-1} \nabla_{w} \eta_\rho(w^{(0,0)}) \Vert_{\eta_\rho, w^{(0,j)}} \le \Theta_1\left(\frac{\theta_\rho(w^{(0,0)};\mu^{(0)})}{\mu^{(0)}}\right).
\end{equation}

It remains to estimate the number of updates of $t$. If
$\alpha^{(j)}=1$, then $t^{(j+1)}=(1-\alpha^{(j)})t^{(j)}=0$. Hence, by \eqref{eq:phase 1 delta},
\begin{equation*}
\delta_\rho(w^{(0,j+1)};\mu^{(0)})
=
\delta_{t^{(j+1)},\rho}(w^{(0,j+1)})
\le
\frac{\kappa}{2}.
\end{equation*}
Thus Algorithm~\ref{alg:initiation} terminates after at most one additional Newton step.

We next consider the case where $\alpha^{(j)}<1$. Using the definition of
$\alpha^{(j)}$ in \eqref{alpha}, we have
\begin{equation*}
\alpha^{(j)}t^{(j)}
=
\frac{\kappa}
{4
\left\|
(D^2_{ww}\eta_\rho(w^{(0,j)};\mu^{(0)}))^{-1}
\nabla_w\eta_\rho(w^{(0,0)};\mu^{(0)})
\right\|_{\eta_\rho,w^{(0,j)}}}.
\end{equation*}
Therefore,
\begin{equation*}
\begin{aligned}
t^{(j+1)}
&=(1-\alpha^{(j)})t^{(j)}  \\
&=t^{(j)}
-
\frac{\kappa}
{4
\left\|
(D^2_{ww}\eta_\rho(w^{(0,j)};\mu^{(0)}))^{-1}
\nabla_w\eta_\rho(w^{(0,0)};\mu^{(0)})
\right\|_{\eta_\rho,w^{(0,j)}}}.
\end{aligned}
\end{equation*}
By \eqref{ini-proof-4}, this implies
\begin{equation*}
t^{(j+1)}
\le
t^{(j)}
-
\frac{\kappa}
{4\Theta_1\left( \frac{\theta_\rho(w^{(0,0)};\mu^{(0)} )} {\mu^{(0)}} \right)}.
\end{equation*}
Consequently, as long as Algorithm~\ref{alg:initiation} has not yet reached a point satisfying
\begin{equation*}
t^{(j)}\Theta_1\left(\frac{\theta_\rho(w^{(0,0)};\mu^{(0)})}{\mu^{(0)}}\right)
\le
\frac{\kappa}{2},
\end{equation*}
the parameter \(t^{(j)}\) decreases by at least
$
\frac{\kappa}
{4\Theta_1\left(\frac{\theta_\rho(w^{(0,0)};\mu^{(0)})}{\mu^{(0)}}\right)}
$
at each update. Hence, after at most
\begin{equation*}
\O\left(
\frac{
4t^{(0)}
}{\kappa}\Theta_1\left(\frac{\theta_\rho(w^{(0,0)};\mu^{(0)})}{\mu^{(0)}}\right)
\right)
\end{equation*}
updates, we obtain an index \(j\) such that
$
t^{(j)}\Theta_1\left(\frac{\theta_\rho(w^{(0,0)};\mu^{(0)})}{\mu^{(0)}}\right)
\le
\frac{\kappa}{2}.
$
Combining this estimate with \eqref{eq:phase 1 delta} and \eqref{ini-proof-4}, we get
\begin{equation*}
\begin{aligned}
\delta_\rho(w^{(0,j)};\mu^{(0)})
&=
\left\|
(D^2_{ww}\eta_\rho(w^{(0,j)}))^{-1}
\nabla_w\eta_\rho(w^{(0,j)})
\right\|_{\eta_\rho,w^{(0,j)}}                                                   \\
&\le
\delta_{t^{(j)},\rho}(w^{(0,j)})
+
t^{(j)}
\left\|
(D^2_{ww}\eta_\rho(w^{(0,j)}))^{-1}
\nabla_w\eta_\rho(w^{(0,0)})
\right\|_{\eta_\rho,w^{(0,j)}}                                                   \\
&\le
\frac{\kappa}{2}
+
t^{(j)}\Theta_1\left(\frac{\theta_\rho(w^{(0,0)};\mu^{(0)})}{\mu^{(0)}}\right)
\le
\kappa .
\end{aligned}
\end{equation*}
Therefore, Algorithm~\ref{alg:initiation} terminates. The final Newton step computed
from \eqref{SNM-Newton-direction-rho} then gives a point $v^{(0)}\in\mathcal N(\kappa,\mu^{(0)},\rho)$.
Thus Algorithm~\ref{alg:initiation} requires at most
\begin{equation*}
  \O\left(
1+
\frac{
4t^{(0)}
}{\kappa}\Theta_1\left(\dfrac{\theta_\rho(w^{(0,0)};\mu^{(0)})}{\mu^{(0)}}\right)
\right)
\end{equation*}
iterations to obtain a starting point
\(v^{(0)}\in\mathcal N(\kappa,\mu^{(0)},\rho)\).
\end{proof}

Having established the iteration complexity for the first phase of PFSNM, we now turn to analyze the iteration complexity of Algorithm~\ref{alg:main}. As a preliminary step, Theorems~\ref{theorem 7} and~\ref{thm:twice gradient2} quantify the effect of updating the smoothing parameter $\mu$ on both $\nabla_{w} \et$ and $S_{\eta_\rho}(w;\mu)$. For notational simplicity, we omit the arguments $(w;\mu)$ and write, for example, $\nabla_{w} \eta_\rho := \nabla_{w} \eta_\rho(w;\mu)$  whenever first- or second-order derivatives of $\eta_\rho$ with respect to $w=(\hx,s)$ are mentioned. All derivatives with respect to $\mu$ are denoted by a prime.
\begin{theorem}\label{theorem 7}
	For any $\mu>0$, $\rho>0$, any direction $h=(h_{\hx},h_s) \in\hE\times \E$, and any point $w=(\hx,s)\in \hE\times \E$, 
	\begin{equation}
		\left| \langle h, \nabla_{w} \eta_\rho'(w;\mu) \rangle \right|\leq\sqrt{\dfrac{2\nu}{\mu}} \sqrt{S_{\eta_\rho}(w;\mu) [h,h]}.
	\end{equation}
\end{theorem}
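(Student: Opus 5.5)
The plan is to compute $\nabla_w\eta_\rho'$ explicitly from Corollary~\ref{coro:differentiability-and-derivatives-of-eta} and Theorem~\ref{thm:differentiability}. Then I reduce the pairing $\langle h,\nabla_w\eta_\rho'\rangle$ to a single pairing of $\nabla\phi(z_\rho)$ against the same combined vector $\h_x-\rho^{-1}\h_s$ that appears in the proof of Theorem~\ref{thm:self-concordant-of-eta}. The bound should then follow from Cauchy--Schwarz in the local metric of $\phi$, the identity \eqref{eq:derivatives of phi}, and the lower bound \eqref{S_eta inequality}.

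\emph{Step 1 (differentiate the gradient in $\mu$).} By Corollary~\ref{coro:differentiability-and-derivatives-of-eta}, $\nabla_{\hx}\eta_\rho=\B^*(c-y_\rho)$ and $\nabla_s\eta_\rho=z_\rho-x$, where $x=\bx+\B\hx$ does not depend on $\mu$. Since everything is smooth in $(x,s,\mu)$, I can differentiate in $\mu$ and use the formulas for $y_\rho'$ and $z_\rho'$ from Theorem~\ref{thm:differentiability}. This gives
\begin{equation*}
\nabla_{\hx}\eta_\rho'=\B^*\H^{-1}\nabla\phi(z_\rho),\qquad \nabla_s\eta_\rho'=-\rho^{-1}\H^{-1}\nabla\phi(z_\rho).
\end{equation*}

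\emph{Step 2 (rewrite the pairing).} Set $\h=(\h_x,\h_s)=(\H^{-1}\B h_{\hx},\H^{-1}h_s)$, exactly as in the proof of Theorem~\ref{thm:self-concordant-of-eta}. Because $\H$ is self-adjoint, hence so is $\H^{-1}$, we get
\begin{equation*}
\langle h,\nabla_w\eta_\rho'\rangle=\langle \h_x-\rho^{-1}\h_s,\ \nabla\phi(z_\rho)\rangle .
\end{equation*}

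\emph{Step 3 (Cauchy--Schwarz and the $\nu$ identity).} Write $u=\h_x-\rho^{-1}\h_s$ and use $D^2\phi(z_\rho)\succ0$. Then
\begin{equation*}
|\langle u,\nabla\phi(z_\rho)\rangle|\le \sqrt{D^2\phi(z_\rho)[u,u]}\,\sqrt{\langle\nabla\phi(z_\rho),(D^2\phi(z_\rho))^{-1}\nabla\phi(z_\rho)\rangle}=\sqrt{\nu}\,\sqrt{D^2\phi(z_\rho)[u,u]},
\end{equation*}
where the equality is \eqref{eq:derivatives of phi}, valid since $z_\rho\in\interior(\K)$.

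\emph{Step 4 (compare with $S_{\eta_\rho}$).} Inequality \eqref{S_eta inequality} gives $\mu D^2\phi(z_\rho)[u,u]\le S_{\eta_\rho}(w;\mu)[h,h]$. Combining this with Step 3 yields
\begin{equation*}
|\langle h,\nabla_w\eta_\rho'\rangle|\le\sqrt{\nu/\mu}\,\sqrt{S_{\eta_\rho}(w;\mu)[h,h]},
\end{equation*}
which is stronger than the claimed bound with $\sqrt{2\nu/\mu}$.

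The only delicate point is Step 1. I must confirm that $\mu$-differentiation commutes with taking the $w$-gradient, which holds by the $C^\infty$ smoothness in Theorem~\ref{thm:differentiability}. I must also check the signs of $y_\rho'$ and $z_\rho'$, so that both components collapse onto the same vector $\h_x-\rho^{-1}\h_s$. If a sign or a factor came out differently and the components did not combine, I would instead bound the two pieces separately using the two nonnegative terms in \eqref{S_eta inequality}. That route is where the factor $2$ in the statement would be absorbed, via $(a+b)^2\le2(a^2+b^2)$.
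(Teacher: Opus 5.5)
Your proof is correct, and it takes a genuinely different route from the paper's, which buys a sharper constant. The signs in Step 1 match Theorem~\ref{thm:differentiability}: $-\B^*y_\rho'=\B^*\H^{-1}\nabla\phi(z_\rho)$ and $z_\rho'=-\rho^{-1}\H^{-1}\nabla\phi(z_\rho)$. Consequently, the two components really do collapse onto $u=\h_x-\rho^{-1}\h_s$, and Steps 3--4 yield $|\langle h,\nabla_w\eta_\rho'\rangle|\le\sqrt{\nu/\mu}\,\sqrt{S_{\eta_\rho}(w;\mu)[h,h]}$.

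\textbf{How the paper argues.} The paper takes essentially your fallback route. It bounds the $\hx$-part and the $s$-part separately, each with its own Cauchy--Schwarz step: in the metric $\H^{-1}\W$ for $\langle \B h_{\hx},\H^{-1}\nabla\phi(z_\rho)\rangle$, and in the metric $\H^{-1}$ for $\rho^{-1}\langle h_s,\H^{-1}\nabla\phi(z_\rho)\rangle$. It then uses the operator inequalities $\W^{-1}\H^{-1}\prec\W^{-1}$ and $\H^{-1}\prec\W^{-1}$ together with \eqref{eq:derivatives of phi}. This gives
\[
\sqrt{\nu/\mu}\,\Bigl(\sqrt{D^2_{\hx\hx}\eta_\rho[h_{\hx},h_{\hx}]}+\sqrt{-D^2_{ss}\eta_\rho[h_s,h_s]}\Bigr),
\]
and the factor $\sqrt{2}$ appears only at the final step $a+b\le\sqrt{2}\sqrt{a^2+b^2}$.

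\textbf{What your route buys.} You merge the two blocks into one pairing and reuse the completed-square lower bound \eqref{S_eta inequality} from the proof of Theorem~\ref{thm:self-concordant-of-eta}. This avoids both operator inequalities and the $\sqrt{2}$ loss. It also makes structurally clear that the $\mu$-sensitivity of the gradient is governed by the same effective direction $u$ that governs $D^3\eta_\rho$. The sharper bound would improve the constants downstream. In Lemma~\ref{lemma:effect of mu}, the term $\sqrt{8\nu}/\tau$ becomes $2\sqrt{\nu}/\tau$, so $\alpha_2$ becomes $\sqrt{\nu}/(1+\sqrt{\nu})\le 1$ instead of $\sqrt{2}$. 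That would permit a slightly larger $\gamma$, and hence a more aggressive $\sigma$.

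\textbf{What the paper's route buys.} It is self-contained at the block level and needs only the Hessian formulas of Corollary~\ref{coro:differentiability-and-derivatives-of-eta}, not the cross-term cancellation behind \eqref{S_eta inequality}.
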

\begin{proof}
	Combining Theorem~\ref{thm:differentiability} and Corollary~\ref{coro:differentiability-and-derivatives-of-eta} yields
	$$
	\begin{aligned}
		&\nabla_{w} \eta_\rho'(w;\mu) = \begin{pmatrix} -\B^* y'_\rho \\ z'_\rho \end{pmatrix} = \begin{pmatrix} \B^* \H^{-1}\nabla\phi(z_\rho) \\ -\rho^{-1}\H^{-1}\nabla\phi(z_\rho) \end{pmatrix}.
	\end{aligned}
	$$
	For any $h = (h_{\hx},h_s)\in \hE\times \E$, 
	$$
	 \langle \nabla_{w} \eta_\rho'(w;\mu), h\rangle = \langle \B h_{\hx}, \H^{-1}\nabla\phi(z_\rho)\rangle - \rho^{-1} \langle h_s, \H^{-1}\nabla\phi(z_\rho) \rangle.
	$$
	Let $h_x = \B h_{\hx}$. Then, 
	\begin{equation}\label{eq:thm9-eq1}
     \begingroup 
    \fontsize{9.2pt}{12pt}\selectfont
	\begin{aligned}
		\left|\langle \nabla_{w} \eta'_\rho, h\rangle \right|
        &\leq \left| \langle h_x, \H^{-1}\nabla\phi(z_\rho)\rangle \right| + \rho^{-1}\left| \langle h_s, \H^{-1}\nabla\phi(z_\rho) \rangle \right| \\
		& = \left| \langle h_x, \H^{-\frac{1}{2}}\W^{\frac{1}{2}} \W^{-\frac{1}{2}} \H^{-\frac{1}{2}}\nabla\phi(z_\rho) \rangle \right| + \rho^{-1}\left| \langle h_s, \H^{-\frac{1}{2}} \H^{-\frac{1}{2}}\nabla\phi(z_\rho) \rangle \right| \\
		& \leq \sqrt{\langle h_x, \H^{-1} \W h_x \rangle}\sqrt{ \langle \nabla\phi(z_\rho), \W^{-1} \H^{-1}\nabla\phi(z_\rho) \rangle} \\
        & \qquad \qquad \qquad + \rho^{-1}\sqrt{\langle h_s, \H^{-1} h_s\rangle}\sqrt{ \langle \nabla\phi(z_\rho), \H^{-1}\nabla\phi(z_\rho) \rangle }.
	\end{aligned}
    \endgroup
	\end{equation}
    Since $\H^{-1} = (\I_{\E} + \W)^{-1}$, we have
    $$
     \W^{-1}\H^{-1} \prec \W^{-1}, \quad \H^{-1} \prec \W^{-1},
    $$
    which together with $\W = \frac{\mu}{\rho} D^2 \phi(z_\rho)$ and \eqref{eq:derivatives of phi} implies 
    \begin{equation}\label{eq:thm9-eq2}
        \begin{aligned}
            \sqrt{ \langle \nabla\phi(z_\rho), \W^{-1} \H^{-1}\nabla\phi(z_\rho) \rangle} & \le \sqrt{ \frac{\rho}{\mu} \langle \nabla\phi(z_\rho), (D^2 \phi(z_\rho))^{-1}\nabla\phi(z_\rho) \rangle} = \sqrt{\frac{\rho \nu}{\mu}},\\ 
            \sqrt{ \langle \nabla\phi(z_\rho), \H^{-1}\nabla\phi(z_\rho) \rangle } & \le \sqrt{ \frac{\rho}{\mu} \langle \nabla\phi(z_\rho), (D^2 \phi(z_\rho))^{-1}\nabla\phi(z_\rho) \rangle} = \sqrt{\frac{\rho \nu}{\mu}}.
        \end{aligned}
    \end{equation}
    Combining \eqref{eq:thm9-eq1}, \eqref{eq:thm9-eq2} and Corollary~\ref{coro:differentiability-and-derivatives-of-eta} yields
    $$
       \begin{aligned}
		|\langle \nabla_{w} \eta'_\rho, h\rangle|
		& \le \sqrt{\dfrac{\nu}{\mu}}\left(\sqrt{ \langle h_{\hx},D_{\hx \hx}^2\eta_\rho h_{\hx}\rangle}+\sqrt{-\langle h_s,D_{ss}^2\eta_\rho  h_s\rangle}\right) \\
		& \leq \sqrt{\dfrac{2\nu}{\mu}}\sqrt{S_{\eta_\rho}[h,h]}.
	\end{aligned}
    $$
    This completes the proof.
\end{proof}

\begin{theorem}\label{thm:twice gradient2}
		For any $\mu>0$, $\rho>0$, any direction $h=(h_{\hx},h_s) \in\hE\times \E$, and any point $w=(\hx,s)\in \hE\times \E$, 
		\begin{equation}
			\left| S_{\eta_\rho}'(w;\mu) [h,h]\right|\leq \frac{1+{2\sqrt{\nu}}}{\mu}  S_{\eta_\rho}(w;\mu) [h,h].
		\end{equation}
\end{theorem}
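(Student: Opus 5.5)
The plan is to differentiate the explicit formula for $S_{\eta_\rho}$ from Corollary~\ref{coro:differentiability-and-derivatives-of-eta} with respect to $\mu$, keeping $w$ and $h$ fixed, and to compare the result term by term with the expanded expression \eqref{S_eta inequality}. Write $h_x=\B h_{\hx}$. Since $\H^{-1}\W=\I_{\E}-\H^{-1}$, we have
\begin{equation*}
S_{\eta_\rho}[h,h]=\rho\|h_x\|^2-\rho\langle h_x,\H^{-1}h_x\rangle+\rho^{-1}\langle h_s,\H^{-1}h_s\rangle .
\end{equation*}
Only $\H^{-1}$ depends on $\mu$, and $(\H^{-1})'=-\H^{-1}\W'\H^{-1}$. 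With $\h=(\h_x,\h_s)=(\H^{-1}h_x,\H^{-1}h_s)$ as in the proof of Theorem~\ref{thm:self-concordant-of-eta}, this gives
\begin{equation*}
S'_{\eta_\rho}[h,h]=\rho\langle \h_x,\W'\h_x\rangle-\rho^{-1}\langle \h_s,\W'\h_s\rangle .
\end{equation*}
By the chain rule and Theorem~\ref{thm:differentiability},
\begin{equation*}
\W'=\rho^{-1}D^2\phi(z_\rho)+\tfrac{\mu}{\rho}D^3\phi(z_\rho)[z'_\rho,\cdot,\cdot],\qquad z'_\rho=-\rho^{-1}\H^{-1}\nabla\phi(z_\rho).
\end{equation*}
So $S'$ splits into a second-order part and a third-order part, and I bound each separately.

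\textbf{Second-order part.} It equals
\begin{equation*}
\mu^{-1}\Big(\mu D^2\phi(z_\rho)[\h_x,\h_x]-\tfrac{\mu}{\rho^2}D^2\phi(z_\rho)[\h_s,\h_s]\Big).
\end{equation*}
The second line of \eqref{S_eta inequality} writes $S_{\eta_\rho}[h,h]$ as a sum of four nonnegative terms. Two of these are exactly $a:=\mu D^2\phi(z_\rho)[\h_x,\h_x]$ and $b:=\frac{\mu}{\rho^2}D^2\phi(z_\rho)[\h_s,\h_s]$. Hence $a+b\le S_{\eta_\rho}[h,h]$, and so $|a-b|\le S_{\eta_\rho}[h,h]$. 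This part is therefore bounded by $\frac1\mu S_{\eta_\rho}[h,h]$.

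\textbf{Third-order part.} It equals
\begin{equation*}
\mu D^3\phi(z_\rho)[z'_\rho,\h_x,\h_x]-\tfrac{\mu}{\rho^2}D^3\phi(z_\rho)[z'_\rho,\h_s,\h_s].
\end{equation*}
I apply the standard polarized self-concordance bound $|D^3\phi(z)[u,v,v]|\le 2\|u\|_z\|v\|_z^2$, where $\|u\|_z^2=D^2\phi(z)[u,u]$ (Proposition~\ref{proposition 2}(ii) in the convex case). This bounds the part by $2\|z'_\rho\|_{z_\rho}(a+b)\le 2\|z'_\rho\|_{z_\rho}S_{\eta_\rho}[h,h]$.

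\textbf{The key estimate.} The main step is showing $\|z'_\rho\|_{z_\rho}\le\sqrt{\nu}/\mu$. We have
\begin{equation*}
\|z'_\rho\|_{z_\rho}^2=\rho^{-2}\langle\nabla\phi,\H^{-1}D^2\phi\,\H^{-1}\nabla\phi\rangle=\tfrac{1}{\rho\mu}\langle\nabla\phi,\H^{-1}\W\H^{-1}\nabla\phi\rangle .
\end{equation*}
Because $\W$ and $\H=\I_{\E}+\W$ commute, spectrally $\frac{\omega}{(1+\omega)^2}\le\frac1\omega$, so $\H^{-1}\W\H^{-1}\preceq\W^{-1}=\frac{\rho}{\mu}(D^2\phi)^{-1}$. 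Then \eqref{eq:derivatives of phi} gives $\|z'_\rho\|_{z_\rho}^2\le\frac{1}{\rho\mu}\cdot\frac{\rho}{\mu}\nu=\frac{\nu}{\mu^2}$, as required. The third-order part is thus at most $\frac{2\sqrt\nu}{\mu}S_{\eta_\rho}[h,h]$.

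Adding the two parts gives $|S'_{\eta_\rho}[h,h]|\le\frac{1+2\sqrt\nu}{\mu}S_{\eta_\rho}[h,h]$.
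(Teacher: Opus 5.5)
Your proof is correct and follows essentially the same route as the paper. Both arguments differentiate $\H^{-1}$ using $\W'=\rho^{-1}D^2\phi(z_\rho)+\frac{\mu}{\rho}D^3\phi(z_\rho)[z'_\rho,\cdot,\cdot]$, control the third-order term with the polarized self-concordance inequality, and rely on the key estimate $\sqrt{D^2\phi(z_\rho)[z'_\rho,z'_\rho]}\le\sqrt{\nu}/\mu$ obtained from $\H^{-1}\W\H^{-1}\preceq\W^{-1}$. The only difference is bookkeeping: the paper splits $S_{\eta_\rho}$ into its $\hx$- and $s$-blocks and bounds each block's derivative by that block via $\H^{-1}\W\H^{-1}\preceq\H^{-1}\W$ (leaving the $s$-block as ``similarly''), whereas you split $S'_{\eta_\rho}$ by derivative order and read off $a+b\le S_{\eta_\rho}[h,h]$ directly from the four-term expansion \eqref{S_eta inequality}, which also makes the $s$-block explicit.
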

\begin{proof}
Let $h_x = \B h_{\hx}$ and $\h = (\h_x,\h_s) = (\H^{-1} h_x,\H^{-1} h_s)$. Define 
$$ 
\omega(\mu):= S_{\eta_\rho}(w;\mu) [h,h].
$$ 
By Corollary~\ref{coro:differentiability-and-derivatives-of-eta}, 
$$
\omega(\mu)=\omega_x(\mu)+\omega_s(\mu),
$$
where
$
\omega_x(\mu):=\rho \langle h_x, (\I_{\E} - \H^{-1}) h_x\rangle, \, \omega_s(\mu):=\rho^{-1} \langle h_s, \H^{-1} h_s \rangle.
$
Differentiating $\omega_x(\mu)$ with respect to $\mu$ yields
\begin{equation}
    \begin{aligned}\label{eq:xi1(mu)}
        \left| \omega'_x(\mu) \right|&= \left|\langle \H^{-1} h_x, D^2\phi(z_\rho)\H^{-1}h_x \rangle +\mu D^3\phi(z_\rho)[z_\rho', \H^{-1}h_x, \H^{-1} h_x ] \right|\\
        & \le  \left|\frac{\rho}{\mu} \langle h_x, \H^{-1} \W \H^{-1}h_x\rangle \right| + \left|\mu D^3\phi(z_\rho)[z_\rho',\h_x,\h_x] \right|.
    \end{aligned}
\end{equation}
By Corollary~\ref{coro:differentiability-and-derivatives-of-eta} and the inequality $\H^{-1} \W \H^{-1} \prec \H^{-1} \W$, we have 
\begin{equation}\label{eq:xi1(mu)-1}
        \left| \rho \langle h_x, \H^{-1} \W \H^{-1}h_x\rangle \right|  \le  D^2_{\hx\hx}\eta_\rho [h_{\hx},h_{\hx}].
\end{equation}
Since $\phi$ is a standard self-concordant convex function, it follows from \cite[Proposition 9.1.1]{nesterov1994interior} and \eqref{eq:xi1(mu)-1} that 
\begin{equation}\label{eq:xi1(mu)-2}
\begin{aligned}
        \left| \mu D^3\phi(z_\rho)[z_\rho',\h_x,\h_x] \right| & \le 2\mu\sqrt{D^2\phi(z_\rho)[z_\rho',z_\rho']}D^2\phi(z_\rho)[\h_x,\h_x] \\
        & = 2\rho \sqrt{D^2\phi(z_\rho)[z_\rho',z_\rho']}\left< h_x, \H^{-1} \W \H^{-1}h_x\right>\\
        & \le 2 \sqrt{D^2\phi(z_\rho)[z_\rho',z_\rho']} D^2_{\hx\hx}\eta_\rho [h_{\hx},h_{\hx}].
\end{aligned}
\end{equation}
By Theorem~\ref{thm:differentiability} and the inequality $\H^{-1} \prec \W^{-1}$, we have 
\begin{equation}\label{eq:xi1(mu)-3}
    \begin{aligned}
        \sqrt{D^2\phi(z_\rho)[z_\rho',z_\rho']} & = \frac{1}{\rho}\sqrt{ \langle \nabla \phi(z_\rho), \H^{-1} D^2 \phi (z_\rho) \H^{-1} \nabla \phi(z_\rho) \rangle  }\\ 
        & \le \frac{1}{\rho} \sqrt{ \langle \nabla\phi(z_\rho), \W^{-1} D^2 \phi(z_\rho) \W^{-1} \nabla \phi(z_\rho) \rangle }\\
        & = \frac{1}{\mu} \sqrt{ \langle \nabla\phi(z_\rho), ( D^2 \phi(z_\rho) )^{-1} \nabla \phi(z_\rho) \rangle }\\
        & = \frac{\sqrt{\nu}}{\mu}.
    \end{aligned}
\end{equation}
Combining \eqref{eq:xi1(mu)}--\eqref{eq:xi1(mu)-3} yields
\begin{equation}\label{eq:xi11(mu)}
\left| \omega'_x(\mu) \right| \le \frac{1+2\sqrt{\nu}}{\mu}  D^2_{\hx\hx}\eta_\rho [h_{\hx},h_{\hx}].
\end{equation}
Similarly, we obtain
\begin{equation}\label{eq:xi2(mu)}
        \left|\omega'_s(\mu) \right|
        \leq\frac{1+2\sqrt{\nu}}{\mu}\left(-D^2_{ss}\eta_\rho [h_s,h_s]\right).
\end{equation}
Since $\omega(\mu)=\omega_x(\mu)+\omega_s(\mu)$, it follows from the triangle inequality, \eqref{eq:xi11(mu)}, and \eqref{eq:xi2(mu)} that
\begin{equation*}
\begin{aligned}
    |\omega'(\mu)|&\leq \dfrac{1+2\sqrt{\nu}}{\mu}D^2_{\hx\hx}\eta_\rho [h_{\hx},h_{\hx}]-\frac{1+2\sqrt{\nu}}{\mu}D^2_{ss}\eta_\rho [h_s,h_s]\\
    &= \frac{1+{2\sqrt{\nu}}}{\mu}  S_{\eta_\rho} [h,h].
    \end{aligned}
\end{equation*}
This completes the proof.
\end{proof}

The preceding two theorems quantify how the first- and second-order quantities induced by the reduced BAL function vary when the smoothing parameter is changed. These estimates will be used to control the effect of the parameter $\mu$ update in the path-following phase. Before deriving the final complexity bound, we also need an estimate that connects the computable  quantity $\xi_\rho(w;\mu)$ with the primal-dual gap $\theta_\rho(w;\mu)$. For this purpose, we introduce auxiliary displacement measures from the current point to the exact partial minimizer and maximizer
of the reduced minimax problem:
\begin{subequations}\label{definition delta analysis}
\begin{align}
	\tilde{\delta}_{\hx,\rho}(w;\mu) &= \sqrt{\dfrac{1}{\mu} \langle \widetilde{\Delta \hx}, D^2_{\hx \hx}\et \widetilde{\Delta \hx}\rangle},\label{definition delta x2}\\
	\tilde{\delta}_{s,\rho}(w;\mu) &= \sqrt{-\dfrac{1}{\mu}\langle \widetilde{\Delta s},D^2_{ss}\et\widetilde{\Delta s}\rangle},\label{definition delta s2}\\
	\tilde{\delta_\rho}(w;\mu) &= \sqrt{(\tilde{\delta}_{\hx,\rho}(w;\mu))^2+(\tilde{\delta}_{s,\rho}(w;\mu))^2} = \| \widetilde{\Delta w}\Vert_{\eta_\rho,w},\label{definition delta 2}
\end{align}
\end{subequations}
where $\widetilde{\Delta \hx}=\hx-\hx_\rho (s,\mu)$, $\widetilde{\Delta s}=s-s_\rho(\hx,\mu)$, and $\widetilde{\Delta w}=( \widetilde{\Delta \hx},\widetilde{\Delta s} )$. These quantities are used only in the analysis
and are not required in the implementation of the algorithm.

\begin{theorem}\label{thm:theta-estimate}

For any $\mu>0$, $\rho>0$, and any point $w=(\hx,s)\in \hE\times \E$,
suppose that $\kappa=0.1$ and 
    $$
    \xi_\rho(w;\mu)\le \kappa.
    $$ 
Then the primal-dual gap function satisfies
    \begin{equation*}
		\theta_\rho (w;\mu) \leq \kappa\mu.
	\end{equation*}
\end{theorem}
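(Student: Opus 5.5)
We reduce to the standard self-concordant case: the function $g:=\eta_\rho(\cdot,\cdot;\mu)/\mu$ is standard self-concordant convex-concave by Theorem~\ref{thm:self-concordant-of-eta}. Its local norms coincide with those of $\eta_\rho$ with $\alpha=\mu$, so $\xi_\rho(w;\mu)$ is exactly the $\xi$ of $g$. The gap of $g$ is $\theta_\rho(w;\mu)/\mu$. It therefore suffices to show $\theta_g(w)\le\kappa$ whenever $\xi_g(w)\le 0.1$.

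\textbf{Splitting the gap.} Write $\theta_g(w)=[\,g(\hx,s(\hx))-g(\hx,s)\,]+[\,g(\hx,s)-g(\hx(s),s)\,]$, where $s(\hx)$ and $\hx(s)$ are the partial maximizer and minimizer. These exist because $w\in K(\theta)$, which follows from Theorem~\ref{thm:properties-for-self-concordant-convex-concave-function}(iv) once $\xi<1/3$. Each bracket is the suboptimality of a standard self-concordant convex function: $g(\cdot,s)$ in the first, $-g(\hx,\cdot)$ in the second, by Proposition~\ref{proposition 2}(i).

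\textbf{Bounding each bracket.} For a standard self-concordant convex $f$ with minimizer $u^*$, use the convexity (subgradient) inequality in the form
\[
f(u)-f(u^*)\le\langle\nabla f(u),u-u^*\rangle\le \|\nabla f(u)\|_u^*\,\|u-u^*\|_u .
\]
Here the norms are those induced by $D^2f(u)$ at the current point. Applied to the $\hx$ bracket, this gives $\xi_{\hx}\cdot\tilde\delta_{\hx}$. Applied to the $s$ bracket, it gives $\xi_s\cdot\tilde\delta_s$. The partial norms are the diagonal blocks of $S_g$, matching \eqref{definition xi x}--\eqref{definition xi s} and \eqref{definition delta analysis}. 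Summing and applying Cauchy--Schwarz,
\[
\theta_g(w)\le \xi_{\hx}\tilde\delta_{\hx}+\xi_s\tilde\delta_s\le \xi_g(w)\,\max\{\tilde\delta_{\hx},\tilde\delta_s\}\sqrt2 .
\]
Theorem~\ref{thm:properties-for-self-concordant-convex-concave-function}(iv) gives $\max\{\tilde\delta_{\hx},\tilde\delta_s\}<0.2$ when $\xi\le0.1$. Hence $\theta_g\le 0.1\cdot0.2\sqrt2<\kappa$, and multiplying by $\mu$ yields $\theta_\rho(w;\mu)\le\kappa\mu$.

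\textbf{Main obstacle.} The delicate point is verifying that the partial gradients and Hessians entering $\xi_{\hx},\xi_s$ are taken at the same point $w$ as the $\tilde\delta$'s, so the pairing is legitimate. This holds since both are evaluated at $w$ and the cross terms play no role in the convexity inequality. If Cauchy--Schwarz across blocks feels loose, the fallback is cruder: use $\xi_{\hx},\xi_s\le\xi$ directly, giving $\theta_g\le 2\cdot0.1\cdot0.2=0.04<\kappa$. Either route leaves ample slack.
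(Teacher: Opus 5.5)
Your proof is correct, but it takes a different route from the paper's. The paper also splits $\theta_\rho$ into the $\hx$- and $s$-suboptimalities and uses Theorem~\ref{thm:properties-for-self-concordant-convex-concave-function}(iv) to get $\tilde\delta_{\hx,\rho},\tilde\delta_{s,\rho}<0.2$. It then bounds each piece by a second-order Taylor expansion with integral remainder around the partial optimizer. The Hessian along the segment is controlled by the Dikin-type bound of Theorem~\ref{theorem 1}, together with Proposition~\ref{proposition 2}. This gives $\theta_{s,\rho}\le\mu\bigl(\frac{\tilde\delta_{s,\rho}}{1-\tilde\delta_{s,\rho}}+\ln(1-\tilde\delta_{s,\rho})\bigr)\le\kappa\mu/2$, and likewise for the $\hx$-part.

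You instead use only the first-order convexity inequality $f(u)-f(u^*)\le\langle\nabla f(u),u-u^*\rangle$ and Cauchy--Schwarz in the local norm at $w$. This pairs the gradient norms $\xi_{\hx},\xi_s$ with the displacements $\tilde\delta_{\hx},\tilde\delta_s$. Your route avoids any Hessian comparison along the segment and is more elementary. It also yields a smaller final constant: about $0.028$, versus about $0.054$ from the paper's two blocks.

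What the paper's version buys is a gap estimate that depends only on the distances $\tilde\delta$ and is valid whenever they are below $1$, without reference to the gradient. Yours needs both $\xi$ and $\tilde\delta$, which costs nothing here since a bound on $\xi$ is exactly the hypothesis.

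Two minor points:
\begin{itemize}
\item You swapped the labels. The first bracket $g(\hx,s(\hx))-g(\hx,s)$ is the suboptimality of $-g(\hx,\cdot)$, and the second is that of $g(\cdot,s)$. Your subsequent application to each block is correct.
\item Existence of $s(\hx)$ and $\hx(s)$ is not literally asserted by Theorem~\ref{thm:properties-for-self-concordant-convex-concave-function}(iv) as stated. It comes from the Nesterov--Nemirovskii result underlying that theorem: a Newton decrement below $1/3$ guarantees that the minimizer exists. The paper's proof takes this for granted as well.
\end{itemize}
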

\begin{proof}
	   To quantify the gaps between the current value $\eta_\rho$ and the primal optimal value, define
       \begin{equation*}
       \begin{aligned}
           \theta_{\hx,\rho}(\mu)& := \eta_\rho(\hx,{s};\mu)-\eta_\rho(\hx_\rho (s,\mu),{s};\mu),\\
           \theta_{s,\rho}(\mu) &:= \eta_\rho(\hx,{s}_\rho(\hx,\mu);\mu)-\eta_\rho(\hx,{s};\mu).
       \end{aligned}
       \end{equation*}
     If $\xi_\rho(w;\mu) \le \kappa$, then by Theorem~\ref{thm:properties-for-self-concordant-convex-concave-function}(iv),
    $$ 
        \max\left\{ \tilde{\delta}_{\hx,\rho}(w;\mu),   \tilde{\delta}_{s,\rho}(w;\mu)\right\}\le  2\kappa.
    $$
    Consequently, we have
	$$
	\begin{aligned}
	\theta_{s,\rho}(\mu)&=\eta_\rho(\hx,{s}(\hx,\mu);\mu)-\eta_\rho(\hx,{s};\mu)\\
	&=-\int_{0}^{1} \langle \widetilde{\Delta s}, \nabla_s\eta_\rho(\hx,s_\rho(\hx,\mu)+\tau\widetilde{\Delta s};\mu)\rangle \; d\tau\\
	&=-\int_{0}^{1}\int_{0}^{\tau} \langle \widetilde{\Delta s}, D^2_{ss}\eta_\rho(\hx,s_\rho (\hx,\mu)+t\widetilde{\Delta s};\mu)\widetilde{\Delta s}\rangle \; dt d\tau\\
	&{\leq\int_{0}^{1}\int_{0}^{\tau} \dfrac{\mu\widetilde{\delta}_{s,\rho}^2}{(1-\widetilde{\delta}_{s,\rho}+t \widetilde{\delta}_{s,\rho})^2}\; dt d\tau}\\
	&=\left(\frac{\tilde{\delta}_{s,\rho}}{1-\tilde{\delta}_{s,\rho}}+\ln(1-\tilde{\delta}_{s,\rho})\right){\mu}\\
	&\leq \frac{\kappa }{2}\mu,
	\end{aligned}
	$$
	where the first inequality follows from Theorem~\ref{theorem 1} and Proposition~\ref{proposition 2}. By the same argument, we conclude that $\theta_{\hx,\rho}(\mu) \leq \frac{\kappa }{2}\mu$. 
	 Therefore, 
	 $$
	 \theta_\rho(w;\mu)=\theta_{s,\rho}(\mu)+\theta_{\hx,\rho}(\mu)\leq \kappa \mu .
	 $$
     This completes the proof.
\end{proof}

\begin{remark}
 By \eqref{error-between-optimal-value-and-eta} and Theorem~\ref{thm:theta-estimate}, if $\xi_\rho(w;\mu)\le\kappa$, then
 $$
   | \eta_\rho(w;\mu)-{\rm val} (\mathrm{P}_\mu) | \le \kappa\mu.
 $$
  Therefore, the PFSNM can be viewed as a relaxation of the IPM. The interpolation between the subproblem objectives of the two methods is controlled by the parameter $\mu$. 
\end{remark}

The following lemma relates the merit function $\xi_\rho(w;\mu)$ to the quantities $\nabla_{w} \et$ and $S_{\eta_\rho}(w;\mu)$, which is crucial for the subsequent complexity analysis.

\begin{lemma}\label{lemma:delta-expression}
For any $\mu>0$, $\rho> 0$, and any point $w=(\hx,s)\in \hE\times \E$, 
\begin{equation}
    \xi_\rho (w;\mu) = \max \limits_{0\neq h\in 
    \hE\times \E} \frac{ \left| \langle \nabla_{w} \et, h \rangle \right|}{\sqrt{ \mu  S_{\eta_\rho} (w;\mu)[h,h]}}.
\end{equation}
\end{lemma}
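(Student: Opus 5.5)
The plan is to recognize the identity as the variational (Cauchy--Schwarz) characterization of a dual norm, applied to the positive definite quadratic form $S_{\eta_\rho}(w;\mu)$.

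\emph{Setup.} By Theorem~\ref{thm:self-concordant-of-eta}, $\et$ is nondegenerate. Hence the self-adjoint operator
$$
\mathcal{S}:=\begin{pmatrix} D^2_{\hx\hx}\et & 0\\ 0 & -D^2_{ss}\et\end{pmatrix}
$$
on $\hE\times\E$ is positive definite, and $S_{\eta_\rho}(w;\mu)[h,h]=\langle h,\mathcal{S}h\rangle$. By Corollary~\ref{coro:differentiability-and-derivatives-of-eta}, both diagonal blocks are positive definite: $\rho\B^*\H^{-1}\W\B$ and $\rho^{-1}\H^{-1}$. Therefore $\mathcal{S}^{-1}$ is the block-diagonal operator with blocks $(D^2_{\hx\hx}\et)^{-1}$ and $-(D^2_{ss}\et)^{-1}$. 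Writing $g:=\nabla_w\et=(g_{\hx},g_s)$, the definitions \eqref{definition xi x}--\eqref{definition xi s} give
$$
\mu\,\xi_\rho(w;\mu)^2=\langle g_{\hx},(D^2_{\hx\hx}\et)^{-1}g_{\hx}\rangle-\langle g_s,(D^2_{ss}\et)^{-1}g_s\rangle=\langle g,\mathcal{S}^{-1}g\rangle .
$$

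\emph{Upper bound.} For any $h\neq0$, write $\langle g,h\rangle=\langle \mathcal{S}^{-1/2}g,\mathcal{S}^{1/2}h\rangle$. Cauchy--Schwarz then gives
$$
|\langle g,h\rangle|\le\sqrt{\langle g,\mathcal{S}^{-1}g\rangle}\,\sqrt{\langle h,\mathcal{S}h\rangle}.
$$
Dividing by $\sqrt{\mu\,S_{\eta_\rho}[h,h]}$ shows that every ratio is at most $\xi_\rho(w;\mu)$.

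\emph{Attainment.} If $g\neq0$, take $h^\star=\mathcal{S}^{-1}g$. Then $\langle g,h^\star\rangle=\langle g,\mathcal{S}^{-1}g\rangle=S_{\eta_\rho}[h^\star,h^\star]$, so the ratio equals $\sqrt{\langle g,\mathcal{S}^{-1}g\rangle/\mu}=\xi_\rho$. If $g=0$, both sides vanish.

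The only point needing care is the sign convention. The concave block must enter $\mathcal{S}$ with a minus sign, so that $\mathcal{S}\succ0$ and its inverse reproduces the minus sign in $\xi_{s,\rho}$. Using the full Hessian $D^2_{ww}\et$, which is indefinite, would not work. With that convention fixed, the result is routine.
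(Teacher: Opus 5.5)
Your proof is correct and is essentially the paper's argument. The paper gets the same Cauchy--Schwarz bound in two stages: a dual-norm inequality for each block separately (cited from Nesterov--Nemirovski), then recombination of the two blocks by a two-term Cauchy--Schwarz and the triangle inequality. It attains the bound with the same direction $h=\bigl((D^2_{\hx\hx}\et)^{-1}\nabla_{\hx}\et,\,-(D^2_{ss}\et)^{-1}\nabla_s\et\bigr)=\mathcal{S}^{-1}g$. Your single Cauchy--Schwarz in the block-diagonal metric $\mathcal{S}$ is just a more compact packaging that uses only positive definiteness of the blocks, and your explicit treatment of $g=0$ covers a case the paper leaves implicit.
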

\begin{proof}
By Theorem~\ref{thm:self-concordant-of-eta}, $\eta_\rho(\cdot,s;\mu)$ is nondegenerate $\mu$-self-concordant on $\hE$ for every $s\in \E$, and $-\eta_\rho(\hx,\cdot;\mu)$ is nondegenerate $\mu$-self-concordant on $\E$ for every $\hx \in \hE$. It follows from \cite[Proposition 2.2.1]{nesterov1994interior} that for any nonzero direction $ h=(h_{\hx}, h_s) \in \hE\times \E$, 
\begin{equation*}
\begin{aligned}
         \sqrt{\mu}\xi_{\hx,\rho} (w;\mu) \ge  \frac{\left| \langle \nabla_{\hx} \et, h_{\hx}  \rangle \right|}{ \sqrt{ \langle h_{\hx}, D^2_{\hx \hx} \et h_{\hx} \rangle } },\\
         \sqrt{\mu}\xi_{s,\rho} (w;\mu) \ge  \frac{\left|\langle \nabla_{s} \et, h_s \rangle \right|}{ \sqrt{ -\langle h_s,D^2_{ss} \et h_s \rangle} }.
\end{aligned}
\end{equation*}
Consequently, we have
\begin{equation*}
    \begin{aligned}
         & \sqrt{\mu}\xi_\rho \sqrt{\langle h_{\hx}, D^2_{\hx \hx} \eta_\rho h_{\hx} \rangle  -\langle h_s,D^2_{ss} \eta_\rho h_s \rangle } \\ 
         = &\ \sqrt{\mu} \sqrt{\xi_{\hx,\rho}^2 + \xi_{s,\rho}^2}\sqrt{\langle h_{\hx}, D^2_{\hx \hx} \eta_\rho h_{\hx} \rangle  -\langle h_s,D^2_{ss} \eta_\rho h_s \rangle }\\
         \ge &\ \sqrt{\mu} \left( \xi_{\hx,\rho} \sqrt{\langle h_{\hx}, D^2_{\hx \hx} \eta_\rho h_{\hx} \rangle} + \xi_{s,\rho} \sqrt{-\langle h_s,D^2_{ss} \eta_\rho h_s \rangle}     \right)\\
         \ge &\ \left| \langle \nabla_{\hx} \eta_\rho, h_{\hx}  \rangle \right| +  \left|\langle \nabla_{s} \eta_\rho, h_s \rangle \right|\\
         \ge &\ \left| \langle \nabla_{w} \eta_\rho, h \rangle   \right|,
    \end{aligned}
\end{equation*}
where the first inequality follows from the Cauchy--Schwarz inequality. Thus, 
\begin{equation}\label{lemma:delta-inequality}
    \xi_\rho(w;\mu) \ge \frac{ | \langle \nabla_{w} \et, h \rangle |}{\sqrt{ \mu  S_{\eta_\rho} (w;\mu)[h,h]}}, \quad \forall\, h\in \hE\times \E,\, h\neq0.
\end{equation}
Choosing 
\begin{equation*}
    h = \left( (D_{\hx \hx}^2 \eta_\rho)^{-1} \nabla_{\hx} \eta_\rho, -(D_{s s}^2 \eta_\rho)^{-1} \nabla_{s} \eta_\rho  \right),
\end{equation*}
inequality~\eqref{lemma:delta-inequality} holds with equality, which completes the proof.
\end{proof}

The preceding estimates quantify the sensitivity of the reduced BAL function with respect to the smoothing parameter. The following lemma is the key step in the path-following analysis: it shows that, if the current point is in the neighborhood associated with $\mu$, then the same point remains sufficiently close to the new central path
after replacing $\mu$ by $\sigma\mu$.
\begin{lemma}\label{lemma:effect of mu}
Let $\rho>0$, $\mu>0$, $\kappa=0.1$, and $\mu^+=\sigma\mu$ with 
\[\label{eq:definition of sigma}
\sigma= 1-\dfrac{\ln(\gamma)}{2\sqrt{\nu}+\ln(\gamma)}, \; where \;\gamma:=\dfrac{2\kappa+\sqrt{2}}{\kappa+\sqrt{2}}.
\]
If $\xi_\rho(w;\mu)\le \kappa$, then
\[
\xi_\rho(w;\mu^+)\le 2\kappa .
\]
\end{lemma}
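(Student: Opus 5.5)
The plan is to use the variational characterization of Lemma~\ref{lemma:delta-expression}. It reduces the claim to a one-dimensional estimate along the segment $t\in[\mu^+,\mu]$ for a fixed direction. Fix $w$ and an arbitrary nonzero $h=(h_{\hx},h_s)\in\hE\times\E$, and set
\begin{equation*}
g(t):=\langle \nabla_w\eta_\rho(w;t),h\rangle,\qquad \omega(t):=S_{\eta_\rho}(w;t)[h,h]>0 .
\end{equation*}
By Lemma~\ref{lemma:delta-expression} it suffices to show that $|g(\mu^+)|\le 2\kappa\sqrt{\mu^+\omega(\mu^+)}$ for every such $h$. The hypothesis $\xi_\rho(w;\mu)\le\kappa$ gives, via the same lemma, $|g(\mu)|\le\kappa\sqrt{\mu\,\omega(\mu)}$.

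The first step controls $\omega$. Theorem~\ref{thm:twice gradient2} gives $|(\ln\omega)'(t)|\le (1+2\sqrt\nu)/t$. Integrating between $t$ and $\mu^+$ yields $\omega(t)\le \omega(\mu^+)\,(t/\mu^+)^{1+2\sqrt\nu}$ for $t\in[\mu^+,\mu]$. It is probably cleaner to work with $\psi(t):=t\,\omega(t)$, whose logarithmic derivative is bounded by $(2+2\sqrt\nu)/t$; this should be combined with the ratio actually being estimated. The second step controls $g$. Theorem~\ref{theorem 7} gives $|g'(t)|\le\sqrt{2\nu/t}\,\sqrt{\omega(t)}=\sqrt{2\nu}\,\sqrt{\psi(t)}/t$. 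Hence
\begin{equation*}
|g(\mu^+)|\le \kappa\sqrt{\psi(\mu)}+\sqrt{2\nu}\int_{\mu^+}^{\mu}\frac{\sqrt{\psi(t)}}{t}\,dt .
\end{equation*}

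Dividing by $\sqrt{\psi(\mu^+)}$ and inserting the growth bound on $\psi$ expresses everything in $\tau:=\ln(\mu/\mu^+)=\ln(1/\sigma)$ and an exponent of order $\sqrt\nu$. The integral is elementary, and I expect a bound of the form
\begin{equation*}
\frac{|g(\mu^+)|}{\sqrt{\psi(\mu^+)}}\le (\kappa+\sqrt2)\,E(\sigma,\nu)-\sqrt2 ,
\end{equation*}
where $E$ is an exponential-type factor such as $\sigma^{-(1+\sqrt\nu)}$ or a convex-combination surrogate of it. The term $-\sqrt2$ arises because $\int\sqrt{2\nu}\,(\cdot)^{\sqrt\nu}/t$ integrates to $\sqrt2\,(E-1)$ up to the exponent bookkeeping. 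Since $\gamma$ is defined by $(\kappa+\sqrt2)\gamma-\sqrt2=2\kappa$, the lemma follows once we show $E(\sigma,\nu)\le\gamma$ for the prescribed $\sigma$.

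The main obstacle is this last step: matching the exact constant. With $1-\sigma=\ln\gamma/(2\sqrt\nu+\ln\gamma)$ we have $1/\sigma=1+\ln\gamma/(2\sqrt\nu)$, so $\ln(1/\sigma)\le \ln\gamma/(2\sqrt\nu)$ and $(1/\sigma)^{2\sqrt\nu}\le\gamma$. This suggests the exponent must be kept at $2\sqrt\nu$ rather than $1+2\sqrt\nu$. I would therefore try first to bound the evolution of the single ratio $g(t)/\sqrt{t\omega(t)}$ directly by a differential inequality, so that the $t$ factor in $\psi$ cancels the ``$1$'' in Theorem~\ref{thm:twice gradient2}. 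Concretely, I would use $|(\ln\psi)'|\le(2\sqrt\nu+2)/t$ only through the combination that appears, and otherwise apply Gr\"onwall-type integration in the variable $\ln t$. If the exponent cannot be reduced to exactly $2\sqrt\nu$, I would settle for a slightly smaller admissible $\sigma$. Taking the supremum over $h$ then gives $\xi_\rho(w;\mu^+)\le 2\kappa$.
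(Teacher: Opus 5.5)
Your route is correct once the integral you set up is actually evaluated, and it differs from the paper's.

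\textbf{The paper's route.} The paper works with the ratio $g(\tau)^2/(\tau\,\omega(\tau))$ directly. It differentiates this ratio and uses Theorems~\ref{theorem 7} and~\ref{thm:twice gradient2} to bound the derivative by $\sqrt{8\nu}\,\tau^{-1}\sqrt{(\cdot)}+(2+2\sqrt\nu)\tau^{-1}(\cdot)$. It then freezes $1/\tau\le 1/\mu^+$ to get constant coefficients and integrates with the factor $e^{c_1\tau}$, $c_1=(1+\sqrt\nu)/\mu^+$. This gives $\xi_\rho(w;\mu^+)\le\alpha_1\kappa+\alpha_2(\alpha_1-1)$ with $\alpha_1=e^{(1+\sqrt\nu)(1/\sigma-1)}$ and $\alpha_2=\sqrt{2\nu}/(1+\sqrt\nu)$.

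\textbf{Your route.} You bound numerator and denominator separately. Integrating $(\ln\psi)'\le(2+2\sqrt\nu)/t$ for your $\psi=t\,\omega$ gives $\sqrt{\psi(t)}\le\sqrt{\psi(\mu^+)}\,(t/\mu^+)^{1+\sqrt\nu}$ on $[\mu^+,\mu]$. The integral $\int_{\mu^+}^{\mu}(t/\mu^+)^{1+\sqrt\nu}\,dt/t=\int_1^{1/\sigma}u^{\sqrt\nu}\,du$ is then exact, so
\begin{equation*}
\frac{|g(\mu^+)|}{\sqrt{\mu^+\omega(\mu^+)}}\le \kappa E+\frac{\sqrt{2\nu}}{1+\sqrt\nu}\,(E-1)\le(\kappa+\sqrt2)E-\sqrt2,\qquad E:=\sigma^{-(1+\sqrt\nu)}.
\end{equation*}
Since $\ln(1/\sigma)\le 1/\sigma-1$, your $E$ never exceeds the paper's $\alpha_1$. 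Your version also avoids the linearization $1/\tau\le1/\mu^+$. It never differentiates the square root of the ratio, which is only Lipschitz where $g$ changes sign. The paper's version, in exchange, packages everything into one scalar differential inequality for the quantity of interest.

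\textbf{The last step closes.} The ``main obstacle'' in your last paragraph is not real, and you should not fall back on a smaller $\sigma$; that would not prove the lemma as stated.
\begin{itemize}
\item The exponent $1+2\sqrt\nu$ governs $\omega$. What you divide by is $\sqrt{t\,\omega(t)}$, whose exponent is $1+\sqrt\nu$.
\item The factor $t$ adds $1/t$ to the logarithmic derivative rather than cancelling anything. The paper's $c_1$ carries the same $1+\sqrt\nu$, so tracking the ratio by a differential inequality would not improve the exponent.
\item Because the rank satisfies $\nu\ge1$, we have $1+\sqrt\nu\le2\sqrt\nu$. With $1/\sigma=1+\ln\gamma/(2\sqrt\nu)>1$ this gives
\begin{equation*}
E\le\Bigl(1+\frac{\ln\gamma}{2\sqrt\nu}\Bigr)^{2\sqrt\nu}\le\gamma .
\end{equation*}
\item Hence $(\kappa+\sqrt2)E-\sqrt2\le(\kappa+\sqrt2)\gamma-\sqrt2=2\kappa$ for the prescribed $\sigma$.
\end{itemize}
The paper's check $\alpha_1\le\gamma$ rests on exactly the same inequality $1+\sqrt\nu\le2\sqrt\nu$. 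Taking the supremum over $h$ via Lemma~\ref{lemma:delta-expression} then finishes the proof.
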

\begin{proof}
For any nonzero $h\in \hE\times \E$,  define the function $\psi: \R_{++} \times \hE \times \E \to \R$ as
\begin{equation*}
    \psi(\tau,h): = \frac{ \langle \nabla_{w} \eta_\rho(w;\tau), h\rangle^2}{ \tau S_{\eta_\rho}(w;\tau) [h,h]}.
\end{equation*}
Differentiating $\psi(\tau,h)$ with respect to $\tau$ gives
\begin{equation*}
    \begin{aligned}
    \psi' (\tau,h) & =  \frac{2 \langle \nabla_{w} \eta_\rho(w;\tau), h\rangle \cdot \langle \nabla_{w} \eta_\rho'(w;\tau), h\rangle}{\tau S_{\eta_\rho}(w;\tau) [h,h]} - \frac{\langle \nabla_{w} \eta_\rho(w;\tau), h\rangle^2 }{ \tau^2 S_{\eta_\rho}(w;\tau) [h,h]}\\
    & \qquad \qquad  -
    \frac{ \langle \nabla_{w} \eta_\rho(w;\tau), h\rangle^2   S'_{\eta_\rho}(w;\tau) [h,h] }{ \tau ( S_{\eta_\rho}(w;\tau) [h,h])^2}.
    \end{aligned}
\end{equation*}
Let $\tau \in [ \mu^+, \mu ]$. It follows from Theorems~\ref{theorem 7}--\ref{thm:twice gradient2} that
\begin{equation*}
    \begin{aligned}
         |\psi' (\tau,h)| &\le \sqrt{ \frac{8\nu}{\tau^2} } \sqrt{ \psi(\tau,h)} + \frac{2+2\sqrt{\nu}}{\tau} \psi (\tau,h)\\
         & \le \sqrt{ \frac{8\nu}{(\mu^+)^2} } \sqrt{ \psi(\tau,h)} + \frac{2+2\sqrt{\nu}}{\mu^{+}} \psi (\tau,h).
    \end{aligned}
\end{equation*}
Define $c_1:= \frac{1+\sqrt{\nu}}{ \mu^+}$ and $\Psi (\tau,h) := e^{c_1 \tau} \sqrt{\psi(\tau,h)}$. We have
\begin{equation}\label{Psi-derivative}
    -\Psi'(\tau,h) \le e^{c_1 \tau} \frac{\sqrt{2\nu}}{\mu^{+}}.
\end{equation}
Integrating both sides of \eqref{Psi-derivative} over $[\mu^+,\mu]$ yields
\begin{equation*}
    \Psi( \mu^+,h) - \Psi( \mu,h) \le \frac{ \sqrt{2\nu}}{\mu^{+}} \frac{1}{c_1} \left(e^{c_1 \mu} -  e^{c_1 \mu^{+}}    \right).
\end{equation*}
This implies that for any nonzero direction $h\in \hE\times \E$, 
\begin{equation}\label{eq:merit function}
\begin{aligned}
     &\sqrt{\psi(\mu^{+},h)}\\
     \le &\, e^{c_1 (\mu-\mu^{+})} \sqrt{\psi (\mu,h)} +\frac{\sqrt{2\nu}}{\mu^{+}}\frac{1}{c_1} \left( e^{ c_1 (\mu-\mu^{+})}- 1   \right)\\
      \le &\, e^{c_1 (\mu-\mu^{+})} \xi_\rho(w;\mu) +\frac{\sqrt{2\nu}}{\mu^{+}} \frac{1}{c_1} \left( e^{ c_1 (\mu-\mu^{+})}- 1   \right),
\end{aligned}
\end{equation}
where the last inequality follows from Lemma~\ref{lemma:delta-expression}. Since \eqref{eq:merit function} holds for every nonzero direction $h\in \hE\times \E$, we conclude by Lemma~\ref{lemma:delta-expression} again that
\begin{equation}
\begin{aligned}\label{eq:aux eq1}
       \xi_\rho(w;\mu^{+}) &\le e^{c_1 (\mu-\mu^{+})} \xi_\rho(w;\mu) 
   +\frac{\sqrt{2\nu}}{\mu^{+}} \frac{1}{c_1}\left( e^{ c_1 (\mu-\mu^{+})}- 1   \right).
\end{aligned}
\end{equation}
Recall $\mu^{+} = \sigma \mu$. Let
\begin{equation}\label{eq:aux eq2}
    \left\{\begin{array}{ll}
         & \alpha_1 (\sigma):= e^{c_1 (\mu-\mu^{+})}  = e^{ (1 + \sqrt{\nu}) (\frac{1}{\sigma} -1) }, \\
         & \alpha_2:=\frac{\sqrt{2\nu}}{\mu^{+}} \frac{1}{c_1} = \frac{ \sqrt{2\nu }}{1 +\sqrt{\nu}}.
    \end{array}\right.
\end{equation}
It can be verified that
\begin{equation*}
    \alpha_1(\sigma) =e^{(1+\sqrt{\nu})\cdot\frac{\ln(\gamma)}{2\sqrt{\nu}}} \le  \gamma \text{ \, and\,  } \alpha_2 \le \sqrt{2},
\end{equation*}
whenever $\sigma= 1-\dfrac{\ln(\gamma)}{2\sqrt{\nu}+\ln(\gamma)}$.

Combining this estimate with \eqref{eq:aux eq1} and \eqref{eq:aux eq2}, we get
\begin{equation*}
    \xi_\rho(w;\mu^{+}) \le \alpha_1 (\sigma) \kappa + \alpha_2( \alpha_1 (\sigma) - 1)\le 2\kappa,
\end{equation*}
which completes the proof.
\end{proof}

Building on these estimates, we establish the main result of the paper: Algorithm~\ref{alg:main} admits a polynomial iteration complexity of order $\O(\sqrt{\nu}\ln(1/\varepsilon))$, matching the best--known complexity of classical short-step IPMs.

\begin{theorem}\label{thm:complexity of phase 2}
Let $\rho> 0$, $\kappa=0.1$, and choose $\sigma$ as defined in \eqref{eq:definition of sigma}. Suppose that the initial point $v^{(0)}$
is generated by Algorithm~\ref{alg:initiation}. Then Algorithm~\ref{alg:main} requires at most $\O(\sqrt{\nu}\ln(\mu^{(0)}/\varepsilon))$ iterations to attain the desired accuracy~$\varepsilon$.
\end{theorem}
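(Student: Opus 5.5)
The argument is an induction on $k$: every iterate of Algorithm~\ref{alg:main} satisfies $v^{(k)}\in\mathcal N(\kappa,\mu^{(k)},\rho)$. The iteration count then follows from the geometric decrease $\mu^{(k)}=\sigma^k\mu^{(0)}$.

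\emph{Base case and feasibility.} The base case $k=0$ holds because Algorithm~\ref{alg:initiation} returns $v^{(0)}\in\mathcal N(\kappa,\mu^{(0)},\rho)$; this follows from Theorem~\ref{initialization-complexity-analysis} and the discussion after Algorithm~\ref{alg:initiation}. Feasibility, namely $\A x^{(k)}=b$ and $\A^*\lambda^{(k)}+s^{(k)}=c$, is preserved at every step by the structure of \eqref{SNM-Newton-direction-rho}, as noted in \eqref{eq:initial=0}. So only the bound $\xi_\rho(w^{(k+1)};\mu^{(k+1)})\le\kappa$ needs to be tracked.

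\emph{Inductive step.} Assume $\xi_\rho(w^{(k)};\mu^{(k)})\le\kappa$. Lemma~\ref{lemma:effect of mu}, applied with $\mu=\mu^{(k)}$ and $\mu^+=\sigma\mu^{(k)}=\mu^{(k+1)}$, gives $\xi_\rho(w^{(k)};\mu^{(k+1)})\le 2\kappa=0.2$. Next we use that $\eta_\rho(\cdot;\mu^{(k+1)})/\mu^{(k+1)}$ is standard self-concordant convex-concave (Theorem~\ref{thm:self-concordant-of-eta}), so Theorem~\ref{thm:properties-for-self-concordant-convex-concave-function} applies. Part (ii) gives $\delta_\rho(w^{(k)};\mu^{(k+1)})\le\xi_\rho(w^{(k)};\mu^{(k+1)})\le 0.2<1$. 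By Theorem~\ref{thm:SNM-equivalent-BAL-function}, the SNM step $\Delta v^{(k)}$ corresponds exactly to the minimax Newton step $\Delta w$, so $w^{(k+1)}=w^{(k)}+\Delta w$. Part (iii) then yields
\[
\xi_\rho(w^{(k+1)};\mu^{(k+1)})\le\Bigl(\tfrac{0.2}{0.8}\Bigr)^2=0.0625\le\kappa .
\]
The alternative bound $\xi\le\delta/2$ is not available here, since $0.2>2-\sqrt3\approx0.268$ fails to hold in the needed direction only marginally; in fact $0.2<0.268$, so either bound works, and I would use the explicit quadratic one. This closes the induction. The only delicate point is this numerical check that the doubled neighborhood radius $2\kappa$ is contracted back below $\kappa$ in a single Newton step.

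\emph{Counting iterations.} Algorithm~\ref{alg:main} stops as soon as $\sigma^k\mu^{(0)}\le\varepsilon$, so at most $\lceil \ln(\mu^{(0)}/\varepsilon)/\ln(1/\sigma)\rceil$ iterations are needed. With $\gamma>1$ a constant,
\[
\ln(1/\sigma)=-\ln\Bigl(1-\tfrac{\ln\gamma}{2\sqrt\nu+\ln\gamma}\Bigr)\ge \tfrac{\ln\gamma}{2\sqrt\nu+\ln\gamma}.
\]
This gives an iteration bound of $\bigl(1+\tfrac{2\sqrt\nu}{\ln\gamma}\bigr)\ln(\mu^{(0)}/\varepsilon)+1$, which is $\O(\sqrt\nu\ln(\mu^{(0)}/\varepsilon))$.
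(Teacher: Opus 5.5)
Your proposal is correct and follows the paper's proof essentially step for step. Both arguments run an induction on membership in $\mathcal N(\kappa,\mu^{(k)},\rho)$, using Lemma~\ref{lemma:effect of mu} and then one Newton step controlled by Theorem~\ref{thm:properties-for-self-concordant-convex-concave-function}(iii), and both count iterations via $\mu^{(k)}=\sigma^k\mu^{(0)}$; your explicit bound $1/\ln(1/\sigma)\le 1+2\sqrt\nu/\ln\gamma$ makes the $\O(\sqrt\nu)$ factor precise, which the paper only asserts. The remaining differences are cosmetic: the paper uses the branch $\xi(w^+)\le\xi/2$ (valid since $0.2<2-\sqrt3$) instead of your quadratic bound $(0.2/0.8)^2$, and you should delete the self-contradictory clause claiming that branch is unavailable, because its first half is false.
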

\begin{proof}

We first show the iterates remain in the neighborhood $\mathcal{N}(\kappa, \mu^{(k)},\rho)$ for any $k\ge 0$. The claim holds for $k=0$ by the initialization. Suppose that
$
v^{(k)}\in \mathcal N(\kappa,\mu^{(k)},\rho),
$
which implies that 
$$
\xi_\rho(w^{(k)};\mu^{(k)})\le \kappa.
$$
If $\mu^{(k)}\le\varepsilon$, the algorithm terminates. Otherwise, the smoothing parameter is reduced according to
$
\mu^{(k+1)}=\sigma\mu^{(k)}.
$
Applying Lemma~\ref{lemma:effect of mu} with
$
w=w^{(k)},\, \mu=\mu^{(k)},\, \mu^+=\mu^{(k+1)},
$
we obtain
\begin{equation*}
\xi_\rho(w^{(k)};\mu^{(k+1)})\le 2\kappa.
\end{equation*}
Since $\kappa=0.1$, we have $2\kappa=0.2<2-\sqrt{3}$. 
Let $w^{(k+1)}$ denote the point obtained after one full Newton step. By Theorem~\ref{thm:properties-for-self-concordant-convex-concave-function}(iii)
$$
\xi_\rho(w^{(k+1)};\mu^{(k+1)}) \le \frac{\xi_\rho(w^{(k)};\mu^{(k+1)}) }{2}\leq\kappa.
$$
This, combined with \eqref{eq:initial=0}, implies that $ v^{(k+1)} \in \mathcal{N}(\kappa,\mu^{(k+1)},\rho)$.
By induction, we conclude that $v^{(k)}$ remains in the neighborhood $\mathcal{N}(\kappa,\mu^{(k)},\rho)$ for any $k\ge 0$.

Now we bound the number of Newton steps. Since $\mu^{(k)} = \sigma^k \mu^{(0)}$, the stopping criterion $\mu^{(k)}\le \varepsilon$ of Algorithm~\ref{alg:main} is satisfied whenever
$$
k \ge \frac{\ln({\mu^{(0)}}/{\varepsilon})}{ \ln(1/\sigma)}.
$$
Consequently, Algorithm~\ref{alg:main} requires at most
$$
 -\ln\!\left(\frac{\mu^{(0)}}{\varepsilon}\right) /\ln(\sigma) + 1
= \mathcal{O}\!\left(\sqrt{\nu}\ln\!\left(\frac{\mu^{(0)}}{\varepsilon}\right)\right)
$$
iterations to obtain an approximate KKT triple.
\end{proof}

\begin{remark}
    Combining Theorems \ref{initialization-complexity-analysis} and \ref{thm:complexity of phase 2}, we obtain that the first phase of PFSNM requires at most $     \O\left(
1+
\frac{
4t^{(0)}
}{\kappa}\Theta_1(\frac{\theta_\rho(w^{(0,0)};\mu^{(0)})}{\mu^{(0)}})
\right)$ iterations, whereas the second phase of PFSNM admits an iteration complexity of $\mathcal{O}\left(\sqrt{\nu} \ln\left(\mu^{(0)}/{\varepsilon}\right)\right)$. Therefore, the overall iteration complexity is $$    \O\left(
1+
\frac{
4t^{(0)}
}{\kappa}\Theta_1\left(\dfrac{\theta_\rho(w^{(0,0)};\mu^{(0)})}{\mu^{(0)}}\right)
\right)+\mathcal{O}\left(\sqrt{\nu} \ln\left(\frac{\mu^{(0)}}{\varepsilon}\right)\right).$$
Since the first term is independent of $\varepsilon$, the above bound can be written more compactly as $\mathcal{O}\left(\sqrt{\nu} \ln\left(1/{\varepsilon}\right)\right)$.
\end{remark}

\section{Computational results}\label{sec:computational-results}

To validate the effectiveness of the proposed method (PFSNM), this section reports numerical results on three benchmarks and compares PFSNM with several widely used conic programming solvers, including SDPT3 \cite{tutuncu2003solving}, SeDuMi \cite{sturm1999using}, ECOS \cite{domahidi2013ecos}, and Clarabel \cite{goulart2024clarabel}. The test instances consist of linear programs from the NETLIB collection\footnote{\url{https://netlib.org/lp/data/}}, convex quadratic programs (QP) from the Maros--M{\'e}sz{\'a}ros collection\footnote{\url{https://www.doc.ic.ac.uk/~im/}}, and second-order cone programs arising from square-root Lasso formulations constructed using matrices from the SuiteSparse Matrix Collection\footnote{\url{https://sparse.tamu.edu/}}.  All computational results are obtained on a Windows~10 personal computer equipped with an Intel i5-8300H processor (4 cores, 8 threads, 2.3~GHz) and 16~GB of RAM. The proposed method is implemented in C.

We evaluate solver performance using performance profiles \cite{dolan2002benchmarking} and the shifted geometric mean (SGM)\footnote{\url{https://plato.asu.edu/ftp/shgeom.html}}. Let $\mathcal{P}$ denote the benchmark set and $\mathcal{S}$ denote the set of solvers. For each problem $p\in\mathcal{P}$ and solver $s\in\mathcal{S}$, let $t_{p,s}$ be the runtime, and define the performance ratio
\begin{equation*}
r_{p,s}=\frac{t_{p,s}}{\min_{s'\in\mathcal S}t_{p,s'}}\in[1,\infty],
\end{equation*}
where $r_{p,s}=\infty$ if solver $s$ fails to solve problem $p$ within the time limit of 1000 seconds. The performance profile is given by
\begin{equation*}
\rho_s(\tau)=\frac{1}{|\mathcal P|}\Big|\big\{p\in\mathcal P:\ r_{p,s}\le \tau\big\}\Big|,\qquad \tau\ge 1,
\end{equation*}
which measures the fraction of instances for which $s$ is within a factor $\tau$ of the best solver. The value at $\tau=1$ reflects the frequency with which solver $s$ is the fastest, while the limiting value as $\tau$ grows measures its empirical success rate. In addition, we summarize the overall performance via the shifted geometric mean (with offset $=1$)  
\begin{equation*}
\mathrm{SGM}_s=\exp\!\left(\sum_{p\in\mathcal P}\frac{1}{{|\mathcal P|}}\ln\left({\max\{1,t_{p,s}+\text{offset}\}}\right)\right)-\text{offset},
\end{equation*}
so that smaller values indicate better aggregate efficiency while being relatively insensitive to a small number of difficult instances.

\subsection{Linear programs}\label{sec:computational-results.1}
We first test the solvers on linear programs from the widely used NETLIB collection. The results are summarized in Table~\ref{tab:LP} and Fig.~\ref{fig:performance_LP}.

Table~\ref{tab:LP} reports the solved ratios together with the shifted geometric means. The main observation is that PFSNM achieves a favorable balance between robustness and efficiency on this benchmark. In particular, it achieves the best aggregate runtime behavior as measured by the SGM without compromising reliability, whereas competing solvers exhibit either a lower success rate or a noticeably larger SGM. This indicates that the advantage of PFSNM on NETLIB is consistent across instances, reflecting a favorable overall balance between convergence robustness and computational cost.

\begin{table}[H]
\centering
\caption{Solved ratios and SGMs of PFSNM, SDPT3, SeDuMi, ECOS, and Clarabel on the NETLIB collection.}
\label{tab:LP}
\begin{tabular}{cccccc}
\toprule
Solver & PFSNM & SDPT3 & SeDuMi & ECOS & Clarabel \\
\midrule
Solved ratio &  \textbf{100\%} &  73.47\% & 93.88\% &  95.92\% & 97.96\%\\
\hline
 SGM & \textbf{1.0000} & 25.0299 & 3.5985 & 2.7778 & 2.0520\\
\bottomrule
\end{tabular}
\end{table}

Figure~\ref{fig:performance_LP} provides a more intuitive perspective through performance profiles. The curve of PFSNM stays above the competing solvers over essentially the entire range of $\tau$. This behavior indicates that PFSNM is frequently the fastest among the five solvers on a large fraction of the benchmark set. Furthermore, $\rho_s(\tau)$ for PFSNM approaches $1$ as $\tau$ increases, which means that it successfully solves all LP instances under the imposed limits. In contrast, the profiles of the other solvers level off below $1$, and their slower rise for small $\tau$ indicates weaker performance on instances with comparable runtimes.
\begin{figure}[H]
    \centering
    \includegraphics[width=0.65\linewidth]{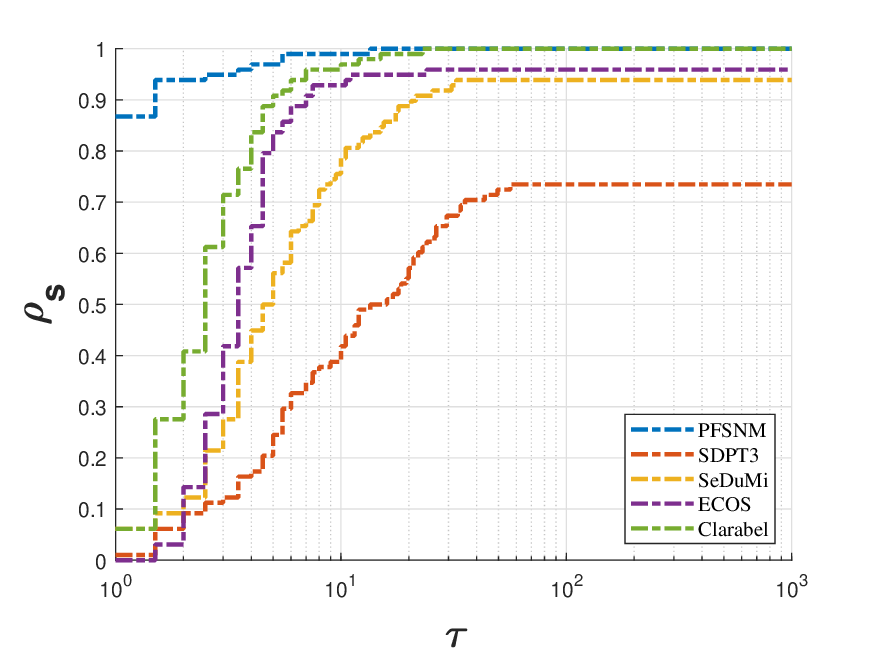}
    \caption{Performance profiles of PFSNM, SDPT3, SeDuMi, ECOS, and Clarabel on the NETLIB collection}
    \label{fig:performance_LP}
\end{figure}

\subsection{Quadratic programs}\label{sec:computational-results.2}
We next consider convex quadratic programs from the Maros--M{\'e}sz{\'a}ros collection, a standard benchmark for QP problems. The results are summarized in Table~\ref{tab:QP-maros} and Fig.~\ref{fig:performance_QP_maros}.

Table~\ref{tab:QP-maros} suggests that PFSNM is competitive in terms of aggregate efficiency, although it is not the best-performing solver overall. In particular, its shifted geometric mean is the second-best among the tested solvers (about $2.66$), whereas Clarabel attains the best value (normalized to $1.00$). At the same time, the solved ratios indicate that Clarabel succeeds on a larger fraction of the QP instances (about $91.3\%$), while PFSNM solves a smaller but still substantial fraction (about $82.6\%$). Overall, the table indicates that the main difference between PFSNM and the best solver on this benchmark lies in robustness on a subset of instances.
\begin{table}[H]
\centering
\caption{Solved ratios and SGMs of PFSNM, SDPT3, SeDuMi, ECOS, and Clarabel on the Maros--M{\'e}sz{\'a}ros collection.}
\label{tab:QP-maros}
\begin{tabular}{cccccc}
\toprule
Solver & PFSNM & SDPT3 & SeDuMi & ECOS & Clarabel \\
\midrule
Solved ratio &  82.61\% &  77.54\% & 83.33\% &  72.46\% & \textbf{91.30\%}\\
\hline
 SGM & 2.6598 & 8.1669 & 5.8094 & 7.7222 & \textbf{1.0000}\\
\bottomrule
\end{tabular}
\end{table}
Figure~\ref{fig:performance_QP_maros} provides a consistent view. The performance profile of PFSNM rises quickly for small values of $\tau$, which indicates that when PFSNM succeeds, its runtimes are often close to those of the best solver on those instances. However, the limiting value of its profile remains below that of the most reliable solver, reflecting the gap in solved ratios reported in Table~\ref{tab:QP-maros}. Overall, the QP results show that PFSNM can be fast on the instances it solves. Improving robustness on the more difficult subset of Maros--M{\'e}sz{\'a}ros problems would further enhance its performance on this benchmark.
\begin{figure}[H]
    \centering
    \includegraphics[width=0.65\linewidth]{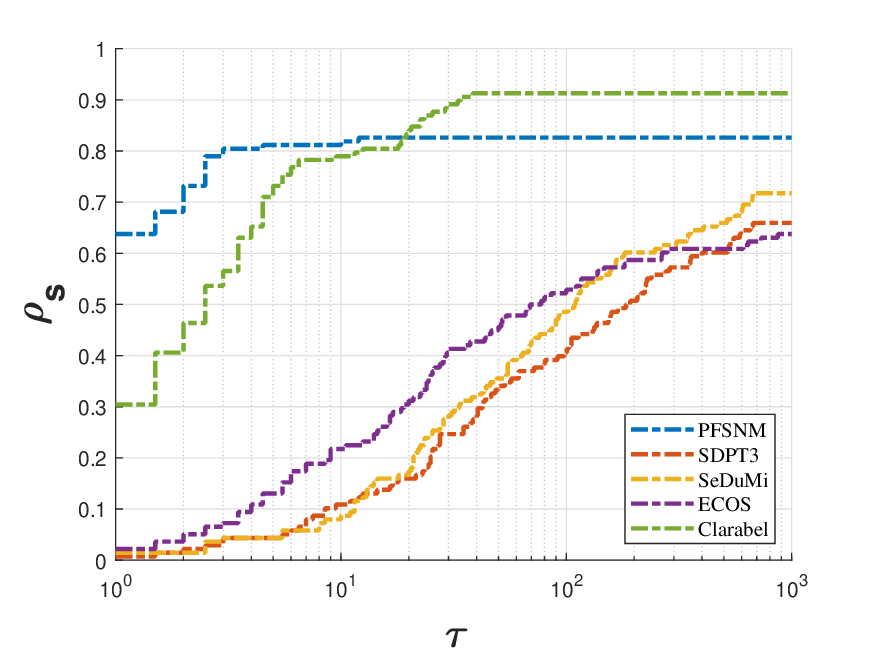}
    \caption{Performance profiles of PFSNM, SDPT3, SeDuMi, ECOS, and Clarabel on the Maros--M{\'e}sz{\'a}ros collection}
    \label{fig:performance_QP_maros}
\end{figure}
\subsection{Second-order cone programs}\label{sec:computational-results.3}
Finally, we evaluate the solvers on a family of SOCP instances constructed from Lasso formulations. The data matrices are drawn from the SuiteSparse Matrix Collection, and each matrix is used to build a square-root Lasso problem \cite{belloni2011square,liang2021inexact} of the form
\begin{equation*}
    \min\limits_{y \in \R^{n}} \left\{ \Vert D y - b \Vert_2 + \varrho \Vert y \Vert_1      \right\},
\end{equation*}
where $D\in \R^{d\times n}$ is a matrix from the SuiteSparse Matrix Collection, $b \in \R^d$ is a given vector, and $\varrho$ is a penalty parameter. This problem is equivalent to the following SOCP reformulation:
\begin{equation*}
    \min \left\{ t+\varrho \sum\limits_{i=1}^n (y^+_i + y^-_i) \, \Big{|}\, Dy^+ - Dy^- -r =b,\, (t,r)\in \mathbb{Q}^{d+1},\, y^+,y^- \in \R^n_+  \right\}.
\end{equation*}
Following \cite{goulart2024clarabel}, we choose $\varrho = \Vert D^\top b \Vert_{\infty}$. The vector $b$ is set to the all-ones vector.
The results are reported in Table~\ref{tab:SOCP-suitesparse} and Fig.~\ref{fig:performance_profile_SOCP_suitesparse}.

Table~\ref{tab:SOCP-suitesparse} shows that all solvers solve all the instances, so the comparison is driven by efficiency rather than robustness. In this setting, PFSNM attains the smallest SGM (normalized to $1.0000$). The closest competitor is Clarabel, with an SGM only slightly larger (about $1.0818$), whereas the remaining solvers have clearly larger SGM values. Hence, the table indicates that PFSNM provides the best aggregate runtime performance on these Lasso-type SOCPs, with particularly close performance relative to Clarabel.
\begin{table}[H]
\centering
\caption{Solved ratios and SGMs of PFSNM, SDPT3, SeDuMi, ECOS, and Clarabel on SOCP problems constructed from SuiteSparse matrices.}
\label{tab:SOCP-suitesparse}
\begin{tabular}{cccccc}
\toprule
Solver & PFSNM & SDPT3 & SeDuMi & ECOS & Clarabel \\
\midrule
Solved ratio &  \textbf{100\%} &  \textbf{100\%} & \textbf{100\%} &  \textbf{100\%} & \textbf{100\%}\\
\hline
 SGM & \textbf{1.0000} & 3.5750 & 2.5645 & 2.3100 & 1.0818\\
\bottomrule
\end{tabular}
\end{table}

Figure~\ref{fig:performance_profile_SOCP_suitesparse} is consistent with the table-based summary. Since all solvers succeed, the key difference lies in how quickly each profile rises near $\tau=1$. The PFSNM curve increases the fastest and stays close to the best observed curve over a wide range of $\tau$, meaning that it achieves the best or near-best runtime on a large fraction of instances. Combined with the SGM results, these experiments indicate that PFSNM offers strong and reliable performance on SOCP problems constructed from SuiteSparse matrices. 
\begin{figure}[H]
    \centering
    \includegraphics[width=0.65\linewidth]{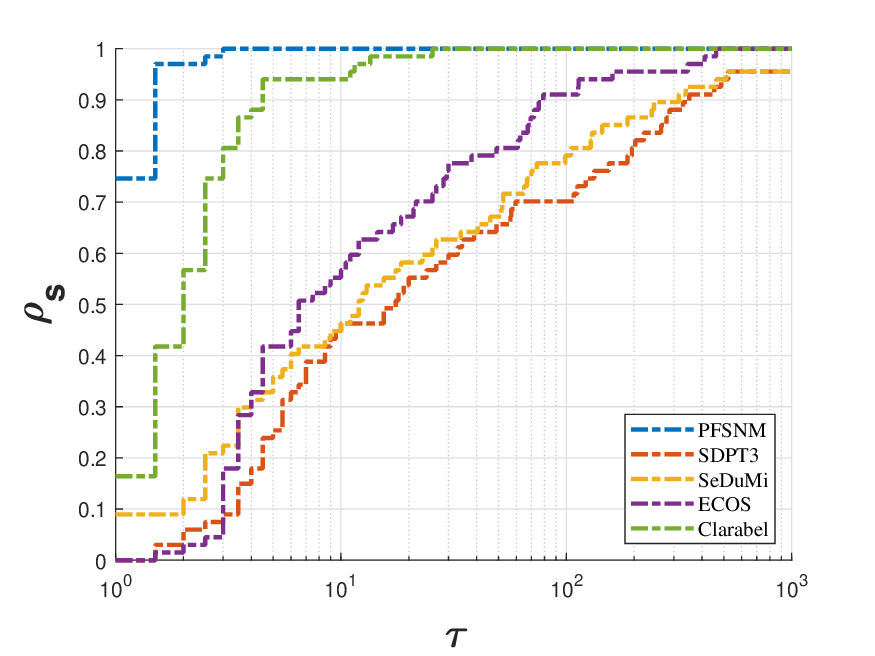}
    \caption{Performance profiles of PFSNM, SDPT3, SeDuMi, ECOS, and Clarabel on SOCP problems constructed from SuiteSparse matrices}
    \label{fig:performance_profile_SOCP_suitesparse}
\end{figure}

\section{Conclusion}\label{sec7}

We have proposed a path-following smoothing Newton method for symmetric cone programming based on a reduced BAL function. The associated parameterized smooth system has been shown to be equivalent to the first-order optimality conditions of a structured minimax problem. This characterization makes it possible to analyze the proposed method within a self-concordant convex-concave framework. We have proved that the reduced BAL function is $\mu$-self-concordant convex-concave and that the resulting method attains a worst-case iteration complexity of $O(\sqrt{\nu}\ln(1/\varepsilon))$. This iteration complexity matches the best-known short-step bound for IPMs on symmetric cones. Moreover, the reduced BAL function also yields Newton systems with an explicit Schur complement, which can be exploited to reduce system-formation costs for important classes of symmetric cones. Numerical results indicate that the method is competitive on standard conic benchmarks.
\appendix

\section{Auxiliary proofs}\label{secA1}
\subsection{Proof of Proposition \ref{proposition 2}}\label{proof:Prop 1}

\begin{proof}
For any point $w=(\hx,s)\in \hE\times  \E $ and any direction $h_{\hx}\in\hE $, define $h = (h_{\hx}, 0)\in\hE\times \E$ and let $\varrho(t) = D^2 f(w + th)[h,h]=D^2_{\hx \hx} f(w+th)[h_{\hx},h_{\hx}]$.
By the $\alpha$-self-concordant convex-concave property of $f$, we have
$$
\begin{aligned}
\varrho'(t) &= D^3_{\hx\hx\hx} f(w+th)[h_{\hx},h_{\hx},h_{\hx}]\\
& = D^3 f(w + th)[h,h,h] \\
& \leq \frac{2}{\alpha^{1/2}} \left( S_f(w+ th)[h, h] \right)^{3/2}\\ &=\frac{2}{\alpha^{1/2}}  \left( D^2_{\hx\hx} f(w+ th)[h_{\hx}, h_{\hx}] \right)^{3/2}.
\end{aligned}
$$
Taking $t=0$ yields
$$
| D^3_{\hx\hx\hx} f(w)[h_{\hx},h_{\hx},h_{\hx}] |\leq \frac{2}{\alpha^{1/2}} \left( D^2_{\hx\hx} f(w)[h_{\hx}, h_{\hx}] \right)^{3/2}.
$$
Moreover, for any $w\in \hE \times \E$ and any nonzero direction $h_{\hx} \in \hE$, 
$$
    D^2_{\hx \hx} f(w)[h_{\hx},h_{\hx}] > 0.
$$
This implies that $f(\cdot,s)$ is nondegenerate $\alpha$-self-concordant on $\hE$ for every $s\in \E$. An analogous argument shows that $-f(\hx,\cdot)$ is nondegenerate $\alpha$-self-concordant on $\E$ for every $\hx\in \hE$.

The conclusion in $(ii)$ follows directly from \cite[Proposition 9.1.1]{nesterov1994interior}. 
\end{proof}
\subsection{Proof of Theorem \ref{thm:properties-for-self-concordant-convex-concave-function}(iv)}\label{proof:Thm 3}

\begin{proof}

Define
	$$
    \begin{aligned}
	\xi_{\hx}(w):=\Big(\tfrac{1}{\alpha}\big\langle \nabla_{\hx} f(w),\big(D^2_{\hx\hx}f(w)\big)^{-1}\nabla_{\hx} f(w)\big\rangle\Big)^{1/2},\\
	\xi_s(w):=\Big(\tfrac{1}{\alpha}\big\langle \nabla_s f(w),\big(-D^2_{ss}f(w)\big)^{-1}\nabla_s f(w)\big\rangle\Big)^{1/2}.
    \end{aligned}
	$$
	By definition,
    $\xi(w)^2=\xi_{\hx}(w)^2+\xi_s(w)^2$.
Let $\varrho_s(\tilde x)=f(\tilde x,s)$ and $\hx(s) = \arg\min_{\tilde x} \varrho_s (\tilde x)$. 
	Define $d:=\hx-\hx(s)$. Recall that
	$$
    \begin{aligned}
	\tilde\delta_{\hx}(w)
	=\Big(\tfrac{1}{\alpha}\langle d,D^2_{\hx\hx}f(w)\,d\rangle\Big)^{1/2}.
    \end{aligned}
	$$
    By Proposition~\ref{proposition 2}, $\varrho_s(\cdot)$ is $\alpha$-self-concordant convex on $\hE$. Consequently, it follows from \cite[Eq.~(2.2.31)]{nesterov1994interior} that
    $$
        \tilde{\delta}_{\hx} (w) \le 1-(1-3\xi_{\hx} (w))^{1/3} \le 1 - (1-3\xi(w))^{1/3}.
    $$
    Similarly, one has 
    $$ 
        \tilde{\delta}_s (w) \le 1-(1-3\xi_s (w))^{1/3} \le 1 - (1-3\xi(w))^{1/3}.
    $$
    Consequently 
    $$ 
        \max\{ \tilde{\delta}_{\hx} (w), \tilde{\delta}_s (w)  \} \le 1 - (1-3\xi(w))^{1/3}.
    $$
    Furthermore, if $\xi(w) \le 0.1$, then 
    $$
    \max\{ \tilde{\delta}_{\hx} (w), \tilde{\delta}_s (w)  \} \le 1-0.7^{1/3} < 0.2,
    $$
    which completes the proof.
\end{proof}

\bibliographystyle{spmpsci}
\bibliography{references}


\end{document}